\def\qed@warning{}
\numberwithin{equation}{section}
\newlist{thmenum}{enumerate}{1}
\setlist[thmenum]{label=(\alph*)}
\newcommand{\nr}{N}
\newcommand{\set}[1]{\left\{ \def\given{\ \middle| \ }  #1 \right\}  }
\newcommand{\defeq}{\mathrel{:=}}
\def\leftfun#1{\mathopen{}\left#1}
\def\rightfun#1{\right#1}
\newcommand{\iso}{\cong}
\DeclareMathOperator{\tr}{tr}
\DeclareMathOperator{\id}{id}
\DeclareMathOperator{\End}{End}
\DeclareMathOperator{\spec}{Spec}
\renewcommand{\Im}{\operatorname{Im}}
\DeclareMathOperator{\Div}{Div}
\NewDocumentCommand{\Wchar}{}{\mathcal{X}}
\NewDocumentCommand{\Wcharhat}{}{\widehat{\mathcal{X}}}
\NewDocumentCommand{\slg}{}{\operatorname{SL}_2(\mathbb{C})}
\NewDocumentCommand{\sla}{}{\mathfrak{sl}_2}
\NewDocumentCommand{\pf}{O{} m }{\varphi_{#1}\leftfun(#2\rightfun)}
\NewDocumentCommand{\pfname}{O{} }{\varphi_{#1}}
\NewDocumentCommand{\qlf}{O{} m m}{\Lambda_{#1}\leftfun(#2 \middle| #3 \rightfun)}
\NewDocumentCommand{\qlfname}{}{\Lambda}
\NewDocumentCommand{\df}{O{} m}{\mathrm{D}\leftfun(#2 \rightfun)}
\newcommand{\dil}{\operatorname{Li}_2}
\newcommand{\dilr}{\mathcal{L}}
\NewDocumentCommand{\modb}{m }{\left\{#1\right\}_{\nr}}
\NewDocumentCommand{\cutoff}{m }{\theta_{\nr}\leftfun(#1\rightfun)}
\NewDocumentCommand{\nrdel}{m m}{\overline{\delta}_{#1}^{#2}}
\NewDocumentCommand{\del}{m m}{\delta_{#1}^{#2}}
\NewDocumentCommand{\qlog}{m m}{w\leftfun(#1 \middle | #2 \rightfun)}
\NewDocumentCommand{\qlogname}{}{w}
\NewDocumentCommand{\qp}{m O{\omega} m}{\left(#1 ; #2\right)_{#3}}
\NewDocumentCommand{\fstack}{m m m}{f\leftfun( \begin{matrix} #1 \\ #2 \end{matrix}\middle| #3 \rightfun)}
\NewDocumentCommand{\fstacksmall}{m m m}{f\leftfun( \begin{smallmatrix} #1 \\ #2 \end{smallmatrix}\middle| #3 \rightfun)}
\newcommand{\RIII}{\operatorname{R3}}
\NewDocumentCommand{\rep}{m}{V\leftfun(#1\rightfun)}
\NewDocumentCommand{\BasisMacro}{m o m}{
  \IfNoValueTF{#2}{%
    {#1}_{#3}
  }{%
    {#1}(#2)_{#3}
  }
}
\NewDocumentCommand{\vb}{m}{v_{#1}}
\NewDocumentCommand{\vbh}{m}{\widehat{v}_{#1}}
\NewDocumentCommand{\rmat}{o m m m m}{
  \IfNoValueTF{#1}{%
    \widehat{R}_{#2 #3}^{#4 #5}
  }{%
    \widehat{R}(#1)_{#2 #3}^{#4 #5}
  }
}
\NewDocumentCommand{\rmatm}{m m m m}{\widehat{\overline{R}}_{#1 #2}^{#3 #4}}
\NewDocumentCommand{\jfunc}{O{\xi}}{\mathcal{J}_{#1}}
\NewDocumentCommand{\jinv}{O{\xi}}{\operatorname{J}_{#1}}
\NewDocumentCommand{\lN}{}{\mathrm{r}}
\NewDocumentCommand{\lW}{}{\mathrm{t}}
\NewDocumentCommand{\lS}{}{\mathrm{l}}
\NewDocumentCommand{\lE}{}{\mathrm{b}}
\declaretheorem[style=theorem]{proposition}
\declaretheorem[style=theorem,sibling=proposition]{theorem}
\declaretheorem[style=theorem,sibling=proposition]{lemma}
\declaretheorem[style=definition,sibling=proposition]{definition}
\declaretheorem[style=definition,sibling=proposition]{remark}
\crefname{equation}{equation}{equations}
\Crefname{equation}{Equation}{Equations}
\crefname{bigtheorem}{Theorem}{Theorems}
\Crefname{bigtheorem}{Theorem}{Theorems}
\newcommand{\CC}{\mathbb{C}}
\newcommand{\ZZ}{\mathbb{Z}}
\newcommand{\J}{\mathcal{J}}
\newcommand{\Z}{\mathcal{Z}}
\newcommand{\U}{\mathcal{U}}
\newcommand{\W}{\mathcal{W}}
\newcommand{\R}{\mathcal{R}}
\newcommand\mybar{\kern1pt\rule[-\dp\strutbox]{1pt}{\baselineskip}\kern1pt}
\title[The holonomy braiding for \(\U_\xi(\mathfrak{sl}_2)\)]{The holonomy braiding for \(\U_\xi(\mathfrak{sl}_2)\) in terms of geometric quantum dilogarithms}
\author{Calvin McPhail-Snyder}
\address{C.M-S.: Department of Mathematics, Duke University}
\email{calvin@sl2.site}
\author{Nicolai Reshetikhin}
\address{N.R.: Department of Mathematics, Yau Center for Mathematical Sciences, Tsinghua University, Beijing; BIMSA, Beijing; St. Petersburg State University, St. Petersburg; Department of Mathematics, UC Berkeley}
\email{reshetik@math.berkeley.edu}
\subjclass{Primary 17B37, secondary 57K16}
\keywords{quantum groups, holonomy braiding, quantum dilogarithm}
\begin{document}

\begin{abstract}
  We derive an explicit formula for the holonomy $R$-matrix of quantum $\mathfrak{sl}_2$ at a root of unity.
  We show it factorizes into a product of four quantum dilogarithms and satisfies a holonomy Yang-Baxter equation.
  This factorization extends previously known results and we collect many existing results needed for our computation.
\end{abstract}

\maketitle

\tableofcontents

\section{Introduction}
\label{sec:Introduction}

Holonomy $R$-matrices were used in \cite{Kashaev2005} to describe invariants of knots with flat connections in the complement.
These $R$-matrices were introduced in \cite{Reshetikhin1995} to describe the braiding for the de Concini-Kac specialization of quantum groups corresponding to simple finite dimensional Lie algebras at a root of unity. 
In that case the quantized universal enveloping algebra is finite dimensional over the central Hopf subalgebra.
Generic irreducible modules are parametrized by generic points of a finite cover of the dual Poisson Lie group with the standard Poisson-Lie structure \cite{DeConcini1990}.
In this paper we focus on the case of $\mathfrak{sl}_2$.

It is known that when the universal \(R\)-matrix is evaluated in representations obtained through a certain homomorphism from quantum
$\mathfrak{sl}_2$ to the \(q\)-Weyl algebra it factorizes into the product of
four quantum dilogarithms, see for example \cite{Fadeev2000,Schrader2019} and references therein. 
This factorization in the setting of the modular double was first observed 
in \cite{Fadeev2000}. In \cite{Kashaev1996} this factorization was was derived as a general consequence of homomorphisms from the Drinfeld double to the Heisenberg double of a Hopf algebra. 

In \cite{Faddeev1994, Bazhanov1995} a geometric version of the quantum dilogarithm 
was established. The geometric \(q\)-dilogarithm is the regular part of the asymptotic of \(q\)-dilogarithm when $q$ goes to a root of unity as in \cite{Bazhanov1995}. It was shown to satisfy the holonomy pentagon equation. 

In this paper we describe the factorization of the holonomy $R$-matrix for quantum $\mathfrak{sl}_2$ \cite{Reshetikhin1995}
into four holonomy quantum dilogarithms. We construct this factorization ``from scratch'', directly from the representation theory of the de Concini-Kac version of quantum $\mathfrak{sl}_2$ at a root of unity.
Our \(R\)-matrices are sections of a vector bundle over the space of geometric parameters and we obtain local coordinates by taking certain logarithms.
We show that our \(R\)-matrices satisfy a holonomy Yang-Baxter relation when these log-parameters satisfy a natural geometric condition.

In \cite{McPhailSnyderVolume} our \(R\)-matrices are used to define knot and tangle invariants which are a refinement of those in \cite{Blanchet2018}.
We expect that  using the factorization that we describe here one can clearly establish the relation 
between invariants of knots with flat connection in the complement constructed in \cite{Kashaev2005} and 
the work \cite{Baseilhac2004} where similar invariants were obtained using triangulations of the complement.
We also expect that our factorization is an important step in constructing the corresponding homotopy TQFT \cite{Turaev2010}; see also \cite{GeerBook}.

This paper is a mix of an overview of known material and original results.
We include the overview to collect material scattered over a large number of papers.

In \cref{sec:q-algebra} we establish conventions on quantum \(\sla\) and its presentation in terms of Weyl algebras. \Cref{sec:root of unity} is an overview of elements of representation theory of quantum \(\sla\) and its braiding.
In \cref{sec:R-matrix} we construct the braiding matrix (the holonomy $R$-matrix) using $q$-dilogarithms and prove it satisfies holonomy braid relations.
\Cref{sec:R-matrix-computations}  contains some technical computations involving the \(R\)-matrix that we use elsewhere in the paper.
\cref{sec:quantum-dilogarithms} has technical details on quantum dilogarithms and collects properties used throughout the paper.

\subsubsection*{Acknowledgements}
We would like to thank Nathan Geer, Rinat Kashaev, and Bertrand Patureau-Mirand for helpful discussions. 
The work of N.~R.\ was supported by the Collaboration Grant ``Categorical Symmetries'' from
the Simons Foundation, by the Changjiang fund, and by the project 075-15-2024-631 funded
by the Ministry of Science and Higher Education of the Russian Federation.

\section{Quantum \texorpdfstring{$\sla$}{sl2} and Weyl algebras}
\label{sec:q-algebra}

Here we recall basic facts about quantum \(\sla\), its structure at a root of unity and its braiding properties.

\subsection{The braiding for quantum \texorpdfstring{$\sla$}{sl2}}
\label{sec:qgrp-conventions}

Recall that $\U_q(\sla)$ is the $\CC[q, q^{-1}]$-algebra generated by $K^{\pm 1}, E, F$ with defining relations%
\note{
  Our normalization is different from the standard one \cite{Drinfeld1987,Lusztig1993}.
  It represents the integral form of quantum \(\mathfrak{sl}_{2}\) which specializes to a root of unity as in \cite{DeConcini1990}.
}
\begin{align*}
  KE &= q^2 EK \\
  KF &= q^{-2} FK \\
  [E,F] &= (q - q^{-1}) (K - K^{-1}).
\end{align*}
It is a Hopf algebra, with the coproduct
\[
\Delta(K)=K\otimes K, \ \ \Delta(E)=E\otimes K+1\otimes E, \ \ \Delta(F)=F\otimes 1+ K^{-1}\otimes F
\]
and antipode
\[
  S(K) = K^{-1}, \, \, S(E) = - EK^{-1}, \, \, S(F) = -KF.
\]
The center of $\U_q(\sla)$ is freely generated by the Casimir element\note{The algebra 
$\U_q(\sla)$ defined above, including the Casimir element is actually defined over $\ZZ[q,q^{-1}]$ but we will not use this property here.}
\begin{equation}
  \label{eq:Casimir}
  \Omega = EF + q^{-1} K + q K^{-1}.
\end{equation}

\subsection{The braiding for \texorpdfstring{$\U_q(\sla)$}{Uq(sl2)}}
\label{sec:outer-R-matrix}
The algebra $\U_q(\sla)$ is not quasitriangular.
Instead it is \emph{braided} \cite{Reshetikhin1995} which means there is an outer automorphism 
\[
 \R :  \Div(\U_q(\sla)^{\otimes 2}) \to \Div(\U_q(\sla)^{\otimes 2})
 \]
of the division ring of $\U_q(\sla)^{\otimes 2}$ which intertwines the coproduct and opposite coproduct
\begin{equation*}
  \R(\Delta(x)) =  \Delta^{op}(x)
\end{equation*}
and satisfies hexagon identities
\begin{align*}
  (\Delta \otimes 1) \R(x \otimes y) &= \R_{13} \R_{23} (\Delta(x) \otimes y) \\
  (1 \otimes \Delta) \R(x \otimes y) &= \R_{13} \R_{12} (x \otimes \Delta(y))
\end{align*}
that imply the Yang-Baxter equation.
Here $\R_{ij}$ means that $\R$ acts on tensor factors $i$ and $j$.
That is, for each \(x, y \in \U_q\) one can write
\[
  \R(x \otimes y) = \sum_k f_k \otimes g_k
\]
for some \(f_k, g_k \in \U_q\) and we define
\[
  \R_{13}(x \otimes z \otimes y) = \sum_k f_k \otimes z \otimes g_k
\]
and so on.%
\note{
  Here the summations are finite, but the terms are elements of the corresponding division algebras.
}
The automorphism \(\R\) is derived from the conjugation action of the universal \(R\)-matrix in the \(h\)-adic quantum group, \(q = e^h\) \cite{Kashaev2004}.

$\R$ acts on generators as
\begin{align*}
  \R(\Delta(x)) &= \Delta^{\operatorname{op}}(x), \ x \in \U_\xi \\
  \R(K_1) &= K_1 (1  - q^{-1} K_1^{-1} E_1 F_2 K_2 ) \\
  \R(E_1) &= E_1 K_2 \\
  \R(F_2) &= K_1^{-1} F_2
\end{align*}
where we abbreviate $K_1 = K \otimes 1, E_2 = 1 \otimes E$, etc.
\begin{proposition}[\cite{Kashaev2004}]
  $\R$ acts on the remaining generators of $\U_q^{\otimes 2}$ by:
  \begin{align*}
    \R(K_2) &= (1 - q^{-1} K_1^{-1} E_1 F_2 K_2)^{-1} K_2 \\
    \R(E_2) &= E_1 + K_1 E_2 - E_1 K_2^2 (1 - q K_1^{-1} E_1 F_2 K_2)^{-1} \\
    \R(F_1) &= F_1 K_2^{-1} + F_2 - K_1^{-2} F_2(1 - q K_1^{-1} E_1 F_2 K_2)^{-1}
  \end{align*}
  and preserves Casimirs:
  \(
    \R(\Omega_1) = \Omega_1, \R(\Omega_2) = \Omega_2.
  \)
\end{proposition}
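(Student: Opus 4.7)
The plan is to use the intertwining identity $\R(\Delta(x)) = \Delta^{\op}(x)$ to reduce each of the three unknown actions to the corresponding known one. I would set $X \defeq K_1^{-1} E_1 F_2 K_2$ so that the given formula reads $\R(K_1) = K_1(1 - q^{-1} X)$. Since $\Delta(K) = K_1 K_2 = \Delta^{\op}(K)$, the relation $\R(K_1)\R(K_2) = K_1 K_2$ immediately yields $\R(K_2) = (1 - q^{-1} X)^{-1} K_2$. Applying the same idea to $\Delta(E) = E_1 K_2 + E_2$, $\Delta^{\op}(E) = K_1 E_2 + E_1$ produces
\[
  \R(E_2) = K_1 E_2 + E_1 - \R(E_1)\R(K_2),
\]
and, similarly, $\Delta(F) = F_1 + K_1^{-1} F_2$ with $\Delta^{\op}(F) = F_2 + F_1 K_2^{-1}$ gives
\[
  \R(F_1) = F_2 + F_1 K_2^{-1} - \R(K_1^{-1})\R(F_2).
\]
Each right-hand side is already determined by data in the statement.

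The remaining task is to rewrite these expressions into the stated closed forms. I would assemble a short table of $q$-commutations of $X$ with the generators, derivable directly from $KE = q^2 EK$ and $KF = q^{-2} FK$:
\[
  K_2 X = q^{-2} X K_2, \qquad K_1 X = q^2 X K_1, \qquad F_2 X = q^2 X F_2.
\]
Since $(1 - q^{-1} X)^{-1}$ is a formal geometric series in $X$, each of these commutations becomes a rule for sliding the corresponding generator past $(1 - q^{-1} X)^{-1}$ at the cost of shifting the parameter $q^{-1}$ by a power of $q^{\pm 2}$. Applying these rules mechanically rewrites the expressions above into the desired form; for instance $K_2 (1 - q^{-1} X)^{-1} K_2 = K_2^2 (1 - q X)^{-1}$, which produces the $-E_1 K_2^2 (1 - q X)^{-1}$ term in $\R(E_2)$, and an analogous slide of $K_1^{-2} F_2$ gives the $F_1$ formula.

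For the Casimir invariance, the cleanest route is conceptual. Since $\Omega$ is central in $\U_q(\sla)$, the elements $\Omega_1$ and $\Omega_2$ are central in $\U_q(\sla)^{\otimes 2}$ (and in its division ring). As recalled after the definition of $\R$, this automorphism arises from conjugation by the universal $R$-matrix in the $h$-adic quantum group, and conjugation by any element of a ring automatically fixes central elements; therefore $\R(\Omega_i) = \Omega_i$. A direct substitution check using the generator formulas just derived is possible but substantially longer, with no additional insight.

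The main obstacle I expect is the bookkeeping of $q$-commutations past the nonlinear factor $(1 - q^{-1} X)^{-1}$: stray signs or shifted powers of $q$ propagate immediately through the simplification. Cross-checking with the symmetric constraint $\R(K_1)\R(K_2) = K_1 K_2$, or specializing to the classical limit $q = 1$ where $\R$ becomes a concrete Poisson map on the dual group, helps guard against such errors.
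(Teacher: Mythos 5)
Your derivation of $\R(K_2)$, $\R(E_2)$, and $\R(F_1)$ — applying $\R(\Delta(x)) = \Delta^{\op}(x)$ to $K$, $E$, $F$ and then sliding generators past $(1 - q^{-1}K_1^{-1}E_1F_2K_2)^{-1}$ using the $q$-commutations you list (all of which are correct) — is exactly the paper's method, whose proof consists of the analogous sample computations for $\R(1\otimes K)$ and $\R(1\otimes E)$. For the Casimir identities the paper likewise gives no details; your appeal to the $h$-adic conjugation realization is a legitimate shortcut given the citation to Kashaev's work, though a fully self-contained argument would simply substitute the generator formulas into $\Omega_i = E_iF_i + q^{-1}K_i + qK_i^{-1}$ and simplify.
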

These follow from computations like
\[
  \R(1 \otimes K) = \R(K \otimes 1)^{-1} \R(K \otimes K) = \R(K \otimes 1)^{-1} K \otimes K
\]
and
\[
  \R(1 \otimes E) = \R(E \otimes K + 1 \otimes E - E \otimes K) = E \otimes 1 + K \otimes E - \R(E \otimes K).
\]

\subsection{Representation into the Weyl algebra}

The $q$-Weyl algebra $W_q$ is an associative algebra over $\CC[q, q^{-1}]$ generated by invertible generators $x,y$ satisfying
  \[
    xy = q^2 yx.
  \]
The following is well known:
\begin{proposition}
  There is a unique algebra homomorphism $\phi : \U_q(\sla) \to W_q[z,z^{-1}]$ acting on generators as 
  \[
    \phi(K)=x, \ \ \phi(E)=qy(z-x), \ \ \phi(F)=y^{-1}(1-z^{-1}x^{-1})
  \]
\end{proposition}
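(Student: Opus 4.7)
The plan is to produce the homomorphism by invoking the universal property of the generators-and-relations presentation of \(\U_q(\sla)\). Uniqueness is immediate: an algebra map out of \(\U_q(\sla)\) is determined by its values on the generators \(K^{\pm 1}, E, F\), and the proposed formulas pin those down. For existence, what remains is to verify that the three defining relations of \(\U_q(\sla)\) listed at the start of \cref{sec:qgrp-conventions} hold for the images
\[
  x,\quad qy(z - x),\quad y^{-1}(1 - z^{-1} x^{-1})
\]
inside \(W_q[z, z^{-1}]\). (The relations \(K \cdot K^{-1} = K^{-1} \cdot K = 1\) hold by construction since \(x\) is invertible in \(W_q\).)

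The arithmetic uses only two facts: the Weyl relation \(xy = q^{2} y x\), equivalently \(y^{-1} x = q^{2} x y^{-1}\) and \(y x y^{-1} = q^{-2} x\), together with the centrality of \(z\) in \(W_q[z, z^{-1}]\). The first relation \(KE = q^{2} EK\) follows by commuting the outer \(x\) past \(y\) in \(x \cdot qy(z-x)\), picking up a factor of \(q^{2}\) and using centrality of \(z\). The relation \(KF = q^{-2} FK\) is analogous, using \(x y^{-1} = q^{-2} y^{-1} x\). These are short and routine.

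The main computation, and the only nontrivial one, is the commutator \([E, F] = (q - q^{-1})(K - K^{-1})\). Here the plan is to expand each product
\[
  \phi(E)\phi(F) = q y (z - x) y^{-1} (1 - z^{-1} x^{-1}), \qquad \phi(F)\phi(E) = y^{-1}(1 - z^{-1} x^{-1}) \cdot q y (z - x),
\]
then move every \(y^{\pm 1}\) next to its inverse using \(y x y^{-1} = q^{-2} x\) (and the corresponding identity for \(x^{-1}\)) and use centrality of \(z\). Each product reduces to four monomials in \(x^{\pm 1}, z^{\pm 1}\); the \(z\)- and \(z^{-1}\)-monomials will match and cancel in the difference, and the remaining \(x\)- and \(x^{-1}\)-terms collapse precisely to \((q - q^{-1})(x - x^{-1}) = (q - q^{-1})(\phi(K) - \phi(K^{-1}))\). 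That cancellation is the only place where the specific coefficients \(q\) in \(\phi(E)\) and the shifts \(z - x\), \(1 - z^{-1}x^{-1}\) are pinned down, so it is also the step where a sign or \(q\)-exponent error would show up first; this is the main obstacle, though it is purely bookkeeping.

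Once the three relations are checked, the universal property gives a unique algebra homomorphism \(\phi : \U_q(\sla) \to W_q[z, z^{-1}]\) with the specified values on generators, completing the proof.
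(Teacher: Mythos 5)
Your proposal is correct: the paper states this proposition without proof (labelling it well known), and the routine verification you outline—uniqueness from the values on generators, existence by checking the three defining relations in $W_q[z,z^{-1}]$—is exactly the intended argument. The key commutator computation does work out as you predict: using $yxy^{-1}=q^{-2}x$ and $y^{-1}x^{-1}y=q^{-2}x^{-1}$ one finds $\phi(E)\phi(F)=qz-qx^{-1}-q^{-1}x+q^{-1}z^{-1}$ and $\phi(F)\phi(E)=qz-qx-q^{-1}x^{-1}+q^{-1}z^{-1}$, so the $z^{\pm1}$ terms cancel and the difference is $(q-q^{-1})(x-x^{-1})$ as required.
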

It is easy to compute the image of the Casimir:
\[
  \phi(\Omega) = qz + (qz)^{-1}.
\]
From now on we write \(\W_q = W_q[z, z^{-1}]\).
\begin{proposition}
  \label{thm:R-action-on-weyl}
  There exists a unique algebra automorphism $\R^\W$ of the division algebra $\Div{\W_q^{\otimes 2}}$ of $\W_q^{\otimes 2}$ which acts on the generators as 
  \begin{align*}
    \R^\W(z_1)&=z_1 \\
    \R^\W(z_2)&=z_2 \\
    \R^\W(x_1)&=x_1g, \\
    \R^\W(x_2)&=g^{-1}x_2, \\
    \R^\W(y_1^{-1})&=y_2^{-1}+(y_1^{-1}-z_2^{-1}y_2^{-1})x_2^{-1}, \\
    \R^\W(y_2)&=\frac{z_1}{z_2}y_1+(y_2-{z_2}^{-1}y_1)x_1, \\
    \intertext{
    where
    \(
    x_1 = x \otimes 1, y_2 = 1 \otimes y, z_1 = z \otimes 1, \text{ etc.\ and} 
    \)
  }
    g&=1-x_1^{-1}y_1(z_1-x_1)y_2^{-1}(x_2-z_2^{-1}).
  \end{align*}
  The following diagram commutes:
  \begin{equation}
    \label{eq:automorphism-pullback}
    \begin{tikzcd}
     \Div(\U_q^{\otimes 2}) \arrow[r, "\R"] \arrow[d, "\phi \otimes \phi"] & \Div(\U_q^{\otimes 2}) \arrow[d, "\phi \otimes \phi"] \\
      \Div(\W_q^{\otimes 2}) \arrow[r, "\R^W"] & \Div(\W_q^{\otimes 2})
    \end{tikzcd}
  \end{equation}
  The inverse of $\R^W$ acts on generators as
  \begin{align*}
    (\R^W)^{-1}(x_1)&=x_1\widetilde{g}^{-1}, \\
    (\R^W)^{-1}(x_2)&=\widetilde{g}x_2, \\
    (\R^W)^{-1}(y_1^{-1})&=\frac{z_1}{z_2}y_2^{-1} + (y_1^{-1}-{z_1}y_2^{-1})x_2,\\
    (\R^W)^{-1}(y_2)&=y_1 + (y_2 - {z_1}y_1)x_1^{-1},
  \end{align*}
  where 
  \[
    \widetilde{g}= 1 - y_1 (z_1 - x_1) y_2^{-1} (1 - z_2^{-1} x_2^{-1}).\qedhere
  \]
\end{proposition}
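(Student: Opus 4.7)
The plan is to prove the three claims (existence, commutativity of the diagram, and the inverse formula) in sequence. Uniqueness is immediate: \(\W_q^{\otimes 2}\) (and hence its division algebra) is generated as an algebra with inverses by \(x_1, x_2, y_1, y_2, z_1, z_2\), so an algebra automorphism is determined by its values on these six generators.

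For existence, I would define \(\R^W\) on generators by the stated formulas (noting that \(g\) and \(\widetilde{g}\) are nonzero in \(\Div(\W_q^{\otimes 2})\) and hence invertible there) and verify it extends to an algebra endomorphism of \(\Div(\W_q^{\otimes 2})\). This reduces to checking that the defining relations are preserved: centrality of \(z_1, z_2\), commutation between the two tensor factors, and the Weyl relations \(x_i y_i = q^2 y_i x_i\). The only nontrivial checks are commutation properties of \(g\) with the Weyl generators, which can be read off directly from the defining expression \(g = 1 - x_1^{-1}y_1(z_1 - x_1)y_2^{-1}(x_2 - z_2^{-1})\) using only the base relations \(xy = q^2 yx\) in each tensor factor.

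To verify the diagram \cref{eq:automorphism-pullback}, I would check it on the six generators \(K_1, K_2, E_1, E_2, F_1, F_2\) of \(\U_q^{\otimes 2}\). The cases \(K_1, K_2, E_1, F_2\) are short; for instance,
\[
(\phi \otimes \phi)(\R(K_1)) = x_1\bigl(1 - q^{-1}x_1^{-1}\cdot qy_1(z_1-x_1)\cdot y_2^{-1}(1-z_2^{-1}x_2^{-1})x_2\bigr) = x_1 g = \R^W(\phi \otimes \phi)(K_1).
\]
The cases \(E_2\) and \(F_1\) require more work because \(\R(E_2)\) and \(\R(F_1)\) involve the factor \((1 - qK_1^{-1}E_1F_2K_2)^{-1}\); under \(\phi \otimes \phi\) this becomes a rational expression in the Weyl generators that must be matched with \(\R^W(\phi(E_2)) = \R^W(qy_2(z_2-x_2))\) and \(\R^W(\phi(F_1))\). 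That \(\R^W\) fixes \(z_1, z_2\) is consistent with \(\R\) preserving the Casimirs, since \(\phi(\Omega) = qz + (qz)^{-1}\) is a rational function of \(z\).

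For the inverse, I would verify that the stated \((\R^W)^{-1}\) composed with \(\R^W\) is the identity on generators; the crux is the identity \((\R^W)^{-1}(g) = \widetilde{g}\), which is a direct substitution of the inverse formulas into the defining expression for \(g\). The main obstacle in the whole proof is the bookkeeping in the \(E_2, F_1\) cases of the diagram, together with the Weyl-relation checks involving \(g\). A systematic route is to clear the denominator (multiply through by \(g\), equivalently by \(1 - qK_1^{-1}E_1F_2K_2\) before applying \(\phi\otimes\phi\)), reducing each identity to a polynomial one in the Weyl generators that can be verified mechanically using the \(q\)-commutation rules. The calculations are long but routine.
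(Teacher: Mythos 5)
Your plan is correct and all of its steps will go through (your sample check for \(K_1\) is right, and the \(E_2\), \(F_1\) cases together with the Weyl-relation checks involving \(g\) are indeed the only laborious parts), but it runs in the opposite direction from the paper's argument. The paper does not verify the stated formulas; it \emph{derives} them from the requirement that \eqref{eq:automorphism-pullback} commute: the rules for \(x_1, x_2\) are read off from those for \(K_1, K_2\) via \(\phi(K)=x\), fixing the \(z_i\) is motivated by \(\R\) preserving the Casimirs (since \(\phi(\Omega)=qz+(qz)^{-1}\)), and the formula for \(\R^W(y_1^{-1})\) is obtained by applying \(\phi\otimes\phi\) to \(\R(E_1)=E_1K_2\) and solving \(\R^W(y_1)\left[z_1-x_1+y_1(z_1-x_1)y_2^{-1}(x_2-z_2^{-1})\right]=y_1(z_1-x_1)x_2\), with \(y_2\) handled similarly; the homomorphism property, the rational cases \(E_2\), \(F_1\), and the inverse formulas are left implicit, essentially inherited from the known automorphism \(\R\). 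Your direct-verification route costs more computation (relation checks with \(g\), the two rational generator images, and the composite with the claimed inverse, where \((\R^W)^{-1}(g)=\widetilde{g}\) is, as you say, the crux) but buys a self-contained proof of existence and bijectivity that uses nothing about \(\R\) beyond its action on generators; the paper's derivation is shorter and explains where the expressions come from, at the price of leaving exactly the checks you propose unspoken. One technical point to state explicitly on your route: checking the defining relations only gives a homomorphism \(\W_q^{\otimes 2}\to\Div(\W_q^{\otimes 2})\); to extend it to the division algebra you must know nonzero elements go to nonzero (hence invertible) elements, i.e.\ injectivity, which your construction of the two-sided inverse (or a leading-term argument) supplies but which should be said rather than assumed.
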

\begin{proof}
  The rules for $x_1$ and $x_2$ follow directly from those for $K_1$ and $K_2$ and it is natural to choose \(\R^\W\) to preserve the \(z_i\) as \(\R\) preserves the Casimirs.
  We give $y_1$ as an example, and $y_2$ can be computed similarly.
  Since $\R(E_1) = E_1 K_2$, we have $\R^W(q y_1(z_1 - x_1)) = q y_1(z_1-x_1) x_2$, or
  \[
    \R^W(y_1)
    \left[
      z_1 - x_1 + y_1(z_1 - x_1) y_2^{-1} (x_2 - z_2^{-1})
    \right]
    =
    y_1(z_1 - x_1)x_2.
  \]
  We can multiply by  $y_1^{-1}(z_1 - x_1)^{-1}$ on the right to get
  \[
    \R^W(y_1) \left[ y_1^{-1} + y_2^{-1} (x_2 - z_2^{-1}) \right] = x_2
  \]
  from which it is easy to derive
  \[
    \R^W(y_1^{-1}) = y_2^{-1}+(y_1^{-1}-z_2^{-1}y_2^{-1})x_2^{-1}.\qedhere
  \]
\end{proof}

\section{Specialization to a root of unity}
\label{sec:root of unity}

\subsection{Central subalgebras at roots of unity}

Set $\xi=\exp(\frac{\pi i}{N})$ for \(N \ge 2\) an integer.%
\note{
  Our results will still work for \(\xi = \exp(\pi i m /N)\) for \(m, N\) relatively prime, but we take \(m = 1\) for simplicity.
}
Denote by $\U_\xi$ the specialization of $\U_q(\sla)$ to $q=\xi$.
\begin{proposition}[\cite{DeConcini1990}]
  \label{thm:qgrp-center}
  \begin{thmenum}
    \item
    The center \(\Z\) of $\U_\xi$ is generated by
    \[
      \Z_0 = \CC[K^{\pm N}, E^N, F^N]
    \]
    and the Casimir element $\Omega$ \cref{eq:Casimir}, subject to the relation
    \[
      P_N(\Omega) = E^N F^N - (K^N + K^{-N}) 
    \]
    where $P_N$ is the $N$th renormalized Chebyshev polynomial defined by the identity
    \[
      P_N(t + t^{-1}) = t^N+ t^{-N}.
    \]
    \item
      The central subalgebra \(\Z_0\) is a Hopf subalgebra of  \(\U_\xi\).
  \end{thmenum}
\end{proposition}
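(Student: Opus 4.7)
The plan is to establish part (a) in three steps: prove that $K^{\pm N}, E^N, F^N$ and $\Omega$ are central, verify the stated polynomial relation, and then show that no other central elements exist modulo this relation; part (b) is a separate computation with coproducts.

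For centrality, the key observation is that $\xi^{2N}=1$. The conjugation relations $KE^N K^{-1} = \xi^{2N} E^N = E^N$ and $KF^N K^{-1} = \xi^{-2N}F^N = F^N$ are immediate, and $K^{\pm N}$ is trivially central. The only nontrivial check is that $E$ commutes with $F^N$ (and symmetrically that $F$ commutes with $E^N$). I would derive the inductive formula
\[
[E, F^{n+1}] = (\xi-\xi^{-1})(K-K^{-1})F^n + F[E,F^n],
\]
iterate it, and collect powers of $K$ and $K^{-1}$, showing that $[E,F^n] = (\xi-\xi^{-1}) F^{n-1}\bigl(a_n(\xi) K - a_n(\xi^{-1}) K^{-1}\bigr)$ where $a_n(\xi)$ is (up to normalization) the quantum integer $[n]_{\xi^{-2}}$; at $n=N$ this vanishes since $\xi^{2N}=1$. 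Centrality of $\Omega$ is checked directly on generators. The relation $P_N(\Omega) = E^N F^N - (K^N + K^{-N})$ I would verify either by direct polynomial expansion of $\Omega^k$ with heavy bookkeeping, or more efficiently by noting that both sides are central, so it suffices to check the equality on any faithful family of $\U_q(\sla)$-modules in generic $q$ and then specialize: on a highest weight module of weight $t$ the Casimir acts by $\xi^{-1}t + \xi t^{-1}$, and the identity reduces to the defining identity of the Chebyshev polynomial.

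The most substantive step is showing that $\Z_0[\Omega]$ exhausts the center. Here I would use the PBW basis $\{F^a K^b E^c\}$ of $\U_\xi$ and write a putative central element $z = \sum c_{a,b,c} F^a K^b E^c$. Imposing $[K,z]=0$ forces the weight condition $a=c$ in each monomial; imposing $[E,z]=0$ and $[F,z]=0$ then forces $b$ to be divisible by $N$ modulo appropriate adjustments by powers of $\Omega$, after which one can rewrite $z$ as a polynomial in $E^N, F^N, K^{\pm N}$ and $\Omega$. To pin down the defining relations between these four generators one checks that $\U_\xi$ is a free module of rank $N^3$ over $\Z_0$ and of rank $N^3$ over $\Z$ as well (after accounting for $\Omega$), so that no further relations are possible beyond $P_N(\Omega) = E^N F^N - (K^N + K^{-N})$. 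I expect this dimension-counting step to be the main obstacle, because producing the PBW basis of the quotient requires care with the interaction of $\Omega$ and the nilpotent part.

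For part (b), I would invoke the quantum binomial theorem: since $(E\otimes K)(1\otimes E) = \xi^2 (1\otimes E)(E\otimes K)$, one has
\[
\Delta(E)^N = \sum_{k=0}^N \binom{N}{k}_{\!\xi^2} E^k \otimes K^k E^{N-k},
\]
and the standard vanishing $\binom{N}{k}_{\xi^2}=0$ for $0<k<N$ at $\xi^{2N}=1$ collapses this to $\Delta(E^N) = E^N \otimes K^N + 1 \otimes E^N \in \Z_0\otimes \Z_0$. The same argument gives $\Delta(F^N) = F^N\otimes 1 + K^{-N}\otimes F^N$, while $\Delta(K^{\pm N}) = K^{\pm N}\otimes K^{\pm N}$ is immediate. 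Finally, $S(K^{\pm N}) = K^{\mp N}$, $S(E^N) = (-1)^N \xi^{-N(N-1)} E^N K^{-N}$, and analogously for $F^N$, all of which lie in $\Z_0$, completing the verification that $\Z_0$ is a Hopf subalgebra.
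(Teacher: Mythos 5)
First, a framing point: the paper does not prove this proposition at all --- it is quoted from De Concini--Kac \cite{DeConcini1990} --- so there is no internal argument to compare against, and your sketch must stand on its own. The easy parts do: the centrality computations ($\xi^{2N}=1$, the commutator formula $[E,F^n]=(\xi-\xi^{-1})F^{n-1}\bigl(a_n(\xi)K-a_n(\xi^{-1})K^{-1}\bigr)$ vanishing at $n=N$) and all of part (b) (the $\xi^2$-binomial theorem with $\binom{N}{k}_{\xi^2}=0$ for $0<k<N$, plus the antipode computation) are correct and standard. The first genuine gap is your verification of $P_N(\Omega)=E^NF^N-(K^N+K^{-N})$. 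This identity holds only at $q=\xi$: at generic $q$ the right-hand side is not even central (e.g.\ $[E,E^NF^N]=E^N[E,F^N]\neq 0$), so ``check at generic $q$ and specialize'' cannot be applied to the identity itself. And checking at $q=\xi$ on highest-weight modules is vacuous exactly where it matters: $E^N$ is central and kills a highest-weight vector, so $E^NF^N$ acts by $0$ on every such module; these modules realize only central characters with $\chi(E^N)=0$, a proper closed subvariety, so agreement on them does not force equality in $\U_\xi$. A correct version of your strategy is to prove the generic-$q$ identity $E^NF^N=\prod_{j=1}^{N}\left(\Omega-q^{-(2j-1)}K-q^{2j-1}K^{-1}\right)$ (an easy induction from $EF=\Omega-q^{-1}K-qK^{-1}$ and $EK=q^{-2}KE$) and then specialize, using that the $\xi^{-(2j-1)}$ are exactly the roots of $u^N=-1$, so that $\prod_{j}(t+t^{-1}-u_jK-u_j^{-1}K^{-1})=t^N+t^{-N}+K^N+K^{-N}$; alternatively, evaluate both central elements on a separating family such as the cyclic modules (equivalently push through $\phi$ into the Weyl algebra, where $\phi(\Omega)=qz+(qz)^{-1}$ and $\phi(E^NF^N)=(x^N-z^N)(1-z^{-N}x^{-N})$, and the relation follows from $\xi^N=-1$), together with a Zariski-density argument.

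The second gap is the exhaustion of the center, which you rightly flag as the main obstacle but whose sketch already contains errors. At a root of unity $[K,z]=0$ forces only $c\equiv a\pmod N$ for a PBW monomial $F^aK^bE^c$, not $a=c$: conjugation by $K$ scales the monomial by $\xi^{2(c-a)}$, and indeed $E^N$ and $F^N$ --- which you must keep --- have $a\neq c$, so your stated weight condition would wrongly exclude them. The rank count is also off: by the Chebyshev relation $\Z$ is generically free of rank $N$ over $\Z_0$ (spanned by $1,\Omega,\dots,\Omega^{N-1}$), so $\U_\xi$ has generic rank $N^2$ over $\Z$, not $N^3$, consistent with the $N$-dimensional generic irreducibles. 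Most importantly, the passage from the commutant conditions to ``$z$ is a polynomial in $E^N,F^N,K^{\pm N},\Omega$'' is precisely the nontrivial content of De Concini--Kac's theorem and is not supplied by weight bookkeeping; it needs an actual argument (for instance restriction to the dense family of central characters of the cyclic modules, or a filtered/graded degeneration). As written, the proposal establishes centrality and part (b), but neither the Chebyshev relation nor the completeness of $\Z_0[\Omega]$.
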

As for any commutative Hopf algebra, the set of characters (algebra homomorphisms) \(\chi : \Z_0 \to \mathbb{C}\) is a group with multiplication
\[
  (\chi_1 \cdot \chi_2)(x) \defeq (\chi_1 \otimes \chi_2)(\Delta(x)).
\]
In this case the Hopf subalgebra $\Z_0$ is isomorphic to the algebra of functions on the algebraic group
\[
  \slg^*
  =
  \set{
    \left(
      \begin{bmatrix}
        \kappa & 0 \\
        \phi & 1
      \end{bmatrix}
      ,
      \begin{bmatrix}
        1 & \epsilon \\
        0 & \kappa
      \end{bmatrix}
    \right) 
    \given
    \kappa \ne 0
  }
  \subseteq
  \operatorname{GL}_2(\mathbb{C})
  \times
  \operatorname{GL}_2(\mathbb{C}).
\]
The algebra \(\U_\xi\) is finite-dimensional over \(\Z_0\), so we can think of \(\U_\xi\) as a sheaf of algebras over \(\operatorname{Spec} \Z_0 = \slg^*\).
The map
\[
  \psi : \slg^* \to \slg, \psi(x^+, x^-) = x^+ (x^-)^{-1}
\]
is a birational equivalence, but not a group homomorphism.

\subsection{\texorpdfstring{$\U_\xi$}{Uξ}-modules.}

We say a \(\U_\xi\) module \(V\) has character \(\chi \in \slg^*\) if
\[
  z \cdot v = \chi(z) v \text{ for all } v \in V, z \in \Z_0.
\]
In particular any simple \(\U_\xi\)-module has a character.
Because the multiplication on characters is compatible with the coproduct the category of \(\U_\xi\)-modules with characters is graded by \(\slg^*\): if \(V_1, V_2\) have characters \(\chi_1, \chi_2\), then \(V_1 \otimes V_2\) has character \(\chi_1 \cdot \chi_2\).

A character \(\hat \chi : \Z \to \CC\) on the full center extending some \(\chi \in \slg^*\) is specified by a solution of the equation
\[
  \hat \chi(P_N(\Omega)) = \chi(E^N F^N - (K^N + K^{-N}))
  =
  - \tr \psi(\chi)
\]
If the eigenvalues of \(\psi(\chi)\) are \(m, m^{-1}\), then the Casimir will satisfy
\[
  \hat \chi(\Omega) = \xi^{2\mu + 1} + \xi^{-(2\mu-1)}
\]
for some \(\mu\) with \(\xi^{2\mu} =\omega^{\mu} = m\).
For the modules used in this paper (\cref{def:standard-rep}) the choice of \(\chi\) and \(\mu\) completely determines the isomorphism class of a simple module, so the choice of \(\mu\) is analogous to the choice of a highest weight.

However, unlike for ordinary \(\U_q\)-modules \(\mu\) may not be a highest weight, or even a weight at all.
Whenever the character of \(V\) has \(\chi(E^N), \chi(F^N) \ne 0\)  the generators  \(E\) and \(F\) act invertibly on \(V\).
We call such modules  \defemph{cyclic}, and in this case \(\xi^{2\mu}\)  will \emph{not} be an eigenvalue of \(K\).
While one can still consider a weight basis for \(V\) by diagonalizing \(K\) there is no canonical way to choose a \emph{highest} weight because \(E\) does not have a kernel.
We will give an explicit description of some cyclic modules in \cref{sec:cyclic-reps}.

\subsection{The braiding on characters}
\label{sec:braiding root of unity}
Next we describe how \(\R\) gives an action of the braid group on \(\Z_{0}\)-characters.
Set \(Y = 1 + K_1^{-N} E_1^N F_2^N K_2^N \in \Z_0 \otimes \Z_0\).
In the ring \(\U_\xi^{\otimes 2}[Y^{-1}]\) the element \((1 - q^{-1} K_1^{-1} E_1 F_2 K_2)\) is invertible, so \(\R\) induces a map
\[
  \R :
  \U_\xi^{\otimes 2}
  \to
  \U_\xi^{\otimes 2}[Y^{-1}]
\]
we continue to denote by \(\R\).
For characters \(\chi_1, \chi_2\) the equation
\[
  (\chi_{1'} \otimes \chi_{2'}) (\R (x)) = (\chi_1 \otimes \chi_2), \quad x \in \U_\xi^{2}
\]
uniquely defines characters \(\chi_{1'}, \chi_{2'}\) whenever \((\chi_1 \otimes \chi_2)(\R^{-1}(Y)) \ne 0\).
(Later we will write out this condition explicitly for central \(\W\)-characters.)
As such, we obtain a partially defined, invertible map
\[
  B : \slg^* \times \slg^*  \to \slg^* \times \slg^*
  ,
  B(\chi_1, \chi_2) = (\chi_{2'}, \chi_{1'})
\]
that satisfies the braid relation
\[
  (B \times \id)
  (\id \times B )
  (B \times \id)
  =
  (\id \times B )
  (B \times \id)
  (\id \times B )
\]
on the subset of \((\slg^*)^{\times 3}\) for which each side is defined.
Here by ``partially defined'' we mean that the domain \(B\) is really a subset  \( (\slg^* \times \slg^*) \setminus \mathfrak{A}\) avoiding the singular pairs \(\mathfrak{A}\) of characters discussed above.
The braiding \(B\) was first studied by \textcite{Weinstein1992}.

The map \(B\) is an example of a generically defined biquandle \cite[Section 6]{Blanchet2018}, which means that it makes sense to color tangle diagrams by elements of \(\slg^*\) related by \(B\) at the crossings.
For example, \(B\) satisfies a braid relation, so when modifying a diagram by a Reidemeister type 3 move the labellings can be compatibly modified.
Geometrically these colorings can be understood as defining a (gauge class of) flat \(\sla\) connections on the braid complement \cite{Kashaev2005,Blanchet2018,McPhailSnyder2022}.

\subsection{The homomorphism from \texorpdfstring{$\U_q$}{Uq} to the Weyl algebra at root of unity}
We now consider the presentation of \(\U_q\) in terms of \(\W_q\) at \(q = \xi\).
The map $\phi$ takes the center of $\U_\xi$ to the center of $\W_\xi$:
\begin{lemma}
  The center of $\W_{\xi}$ is generated by $z$, $x^N$, and $y^N$ and $\phi$ takes the center of $\U_\xi$ to the center of $\W_{\xi}$.
  Explicitly,
  \begin{equation*}
    \begin{aligned}
      \phi(K^N)
      &=
      x^N
      \\
      \phi(E^N)
      &=
      y^N(x^N - z^N)
      \\
      \phi(F^N)
      &=
      y^{-N}(1 - z^{-N} x^{-N})
    \end{aligned} 
  \end{equation*}
\end{lemma}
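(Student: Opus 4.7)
The plan is to prove the two assertions separately. I begin with the characterization of $\mathcal{Z}(\W_\xi)$, after which the second claim reduces to computing $\phi$ on each generator of $\mathcal{Z}(\U_\xi)$ listed in \cref{thm:qgrp-center}.

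For the center of $\W_\xi$, I first note that since $\xi^{2N} = 1$, the relation $xy = \xi^{2} yx$ gives $x^N y = \xi^{2N} y x^N = y x^N$ and similarly $y^N x = x y^N$, so $z, x^N, y^N$ all lie in $\mathcal{Z}(\W_\xi)$. For the reverse inclusion I would use the PBW-type fact that $W_\xi$ is free over $\mathbb{C}[x^{\pm N}, y^{\pm N}]$ with basis $\{x^{i} y^{j} : 0 \le i, j < N\}$, so that $\W_\xi$ is free over $\mathbb{C}[z^{\pm 1}, x^{\pm N}, y^{\pm N}]$ with the same basis. Conjugation by $x$ scales $x^{i} y^{j}$ by $\xi^{-2j}$ and conjugation by $y$ scales it by $\xi^{2i}$, so any central element must be supported on pairs $(i,j)$ with $\xi^{2i} = \xi^{-2j} = 1$, i.e.\@ $N \mid i$ and $N \mid j$. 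This leaves only the summand lying in $\mathbb{C}[z^{\pm 1}, x^{\pm N}, y^{\pm N}]$.

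For the images under $\phi$, the identity $\phi(K^N) = x^N$ is immediate, and $\phi(\Omega) = \xi z + \xi^{-1} z^{-1}$ was already recorded above. The key tool for the remaining two is the commutation identity
\[
  (z - \xi^{2k} x)\, y = y\,(z - \xi^{2(k+1)} x),
\]
which follows directly from $xy = \xi^{2} yx$. Iterating it inside $\phi(E)^N = \xi^N (y(z-x))^N$ to collect all $y$'s on the left yields
\[
  \phi(E^N) = \xi^N y^N \prod_{k=0}^{N-1}(z - \xi^{2k} x) = -y^N(z^N - x^N) = y^N(x^N - z^N),
\]
using that the $\xi^{2k}$ for $0 \le k < N$ are precisely the $N$-th roots of unity (so the product telescopes into $z^N - x^N$) and that $\xi^N = e^{i\pi} = -1$. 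The computation for $\phi(F^N)$ is entirely parallel: from the twisted identity $(1 - \xi^{-2k} z^{-1} x^{-1})\, y^{-1} = y^{-1}\,(1 - \xi^{-2(k+1)} z^{-1} x^{-1})$ and the factorization $\prod_{k=0}^{N-1}(1 - \xi^{-2k} t) = 1 - t^N$ (with $t = z^{-1} x^{-1}$) I obtain $\phi(F^N) = y^{-N}(1 - z^{-N} x^{-N})$. Each of these images lies in $\mathbb{C}[z^{\pm 1}, x^{\pm N}, y^{\pm N}] = \mathcal{Z}(\W_\xi)$, which combined with \cref{thm:qgrp-center} proves $\phi(\mathcal{Z}(\U_\xi)) \subseteq \mathcal{Z}(\W_\xi)$.

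No serious obstacle arises; the only delicate bookkeeping is tracking the shift $k \mapsto k+1$ each time a $y$ crosses a linear factor, and recognizing the resulting product as the cyclotomic factorization of $z^N - x^N$.
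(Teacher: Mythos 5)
Your proof is correct and follows essentially the same route as the paper: pull the $y$'s (resp.\ $y^{-1}$'s) to the left, picking up powers of $\xi^{2}$, and then identify the resulting commuting product with the cyclotomic factorization $z^N - x^N$ (resp.\ $1 - z^{-N}x^{-N}$), using $\xi^N = -1$. The only differences are that you spell out the center computation and the $F^N$ case, which the paper dismisses as ``clear'' and ``the same reasoning.''
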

\begin{proof}
  The claim about the center of \(\W\) is clear.
  $K^N$ is obvious and $F^N$ follows from the same reasoning as $E^N$.
  For $E^N$, notice that
  \begin{align*}
    \phi(E^N)
    &=
    \left( \xi y (z - x) \right)^{N}
    \\
    &=
    (\xi y)^2 (z - \xi^2 x) (z - x) \left( y (z - x ) \right)^{N-2}
    \\
    &\dots
    =
    (\xi y)^N \prod_{k=0}^{N-1} (z - \xi^{2k} x)
  \end{align*}
  All the terms in the product except $z^N$ and $x^N$ vanish.
  The coefficient of $z^N$ is clearly $1$, while the coefficient of $x^N$ is $(-1)^{N}$ times $\xi$ raised to the power
  \(
    \sum_{k=0}^{N-1} (2 k) =  N(N-1)
  \)
  so that
  \[
    \phi(E^N)
    = -y^N( z^N + (-1)^{N} \xi^{N(N-1)} x^N)
    = -y^N( z^N + (-1)^{N} (-1)^{N-1} x^N)
    = y^N( x^N - z^N). \qedhere
  \]
\end{proof}

As before the automorphism \(\R^\W\)  of \(\Div(\W_q^{\otimes 2})\) induces a map on the root-of-unity specialization.
It is easy to compute the action on the generators:

\begin{proposition}
  \label{prop:RW-action-on-center}
  The action of $\R^W$ on the center of $\Div(\W_\xi^{\otimes 2})$ is given by
  \begin{align*}
    \R^W(z_1)&=z_1, \\
    \R^W(z_2)&=z_2, \\
    \R^W(x_1^N)&=x_1^N G , \\
    \R^W(x_2^N)&=x_2^N G^{-1}, \\
    \R^W(y_1^{-N})&= y_2^{-N}+\left(y_1^{-N} - \frac{y_2^{-N}}{z_2^{N}}\right)x_2^{-N}, \\
    \R^W(y_2^N)&=\frac{z_1^N}{z_2^{N}}y_1^{N}+ \left(y_2^N - \frac{y_1^N}{z_2^N}\right)x_1^N,
  \end{align*}
  where
  \begin{equation*}
    G = 1 + x_1^{-N} \frac{y_1^N}{y_2^N}(x_1^N - z_1^N)(x_2^N - z_2^{-N}).
  \end{equation*}
  The inverse action is
  \begin{align*}
    (\R^W)^{-1}(x_1^N)&= x_1^N \widetilde G^{-1}, \\
    (\R^W)^{-1}(x_2^N)&= x_2^N \widetilde G, \\
    (\R^W)^{-1}(y_1^{-N})&= \frac{z_1^N}{z_2^N}y_2^{-N}+(y_1^{-N} - z_1^N y_2^{-N})x_2^{N} ,  \\
    (\R^W)^{-1}(y_2^{N})&=y_1^{N}+(y_2^N - z_1^N y_1^N)x_1^{-N},
  \end{align*}
  where
  \begin{equation*}
    \widetilde{G} = 1 + x_2^{-N} \frac{y_1^N}{y_2^N} (x_1^N - z_1^N)(x_2^N - z_2^{-N}).
    \qedhere
  \end{equation*}
\end{proposition}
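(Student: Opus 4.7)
The plan is to deduce these formulas from the generator action of $\R^W$ in \cref{thm:R-action-on-weyl}, using two facts peculiar to $\xi$ a primitive $N$th root of unity: the product identity $\prod_{k=0}^{N-1}(1 - \xi^{2k} u) = 1 - u^N$, and the $q$-binomial theorem $(P+Q)^N = P^N + Q^N$ whenever $PQ = \xi^{\pm 2} QP$. Since $\R^W$ already fixes $z_1, z_2$, only $x_i^N$ and $y_i^{\pm N}$ require work.

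For $x_1^N$ we start from $\R^W(x_1) = x_1 g$ and telescope:
\[
  (x_1 g)^N = x_1^N \cdot (x_1^{-(N-1)} g\, x_1^{N-1})(x_1^{-(N-2)} g\, x_1^{N-2}) \cdots (x_1^{-1} g\, x_1) \cdot g.
\]
Since $x_1$ commutes with every factor of $g$ except $y_1$, on which it acts by $x_1 y_1 x_1^{-1} = \xi^2 y_1$, we get $x_1^{-k} g x_1^{k} = 1 - \xi^{-2k} A$ with $A \defeq g - 1$. These factors are polynomials in the single element $A$ and hence commute, so the product identity yields
\[
  \R^W(x_1^N) = x_1^N \prod_{k=0}^{N-1}(1 - \xi^{-2k} A) = x_1^N (1 - A^N).
\]
Now $A$ factors as $A_1 A_2$ with $A_1, A_2$ supported on disjoint tensor factors, so $A^N = A_1^N A_2^N$, and each $A_i^N$ is handled by the same $q$-binomial argument used to prove the preceding Lemma about $\phi(E^N)$. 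Sign chasing produces $A^N = -x_1^{-N} y_1^N y_2^{-N}(x_1^N - z_1^N)(x_2^N - z_2^{-N})$, whence $1 - A^N = G$.

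For $y_2^N$ we write $\R^W(y_2) = P + Q$ with $P = (z_1/z_2) y_1$ and $Q = (y_2 - z_2^{-1} y_1) x_1$. Using $x_1 y_1 = \xi^2 y_1 x_1$ and $[y_1, y_2] = 0$, a short computation gives $QP = \xi^2 PQ$, so the $q$-binomial theorem yields $(P+Q)^N = P^N + Q^N$. The term $P^N$ is immediate; for $Q^N$, conjugation by $x_1^k$ sends $(y_2 - z_2^{-1} y_1)$ to $(y_2 - \xi^{2k} z_2^{-1} y_1)$, and the same telescoping-plus-product-identity produces $Q^N = (y_2^N - z_2^{-N} y_1^N) x_1^N$, matching the claimed formula. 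The computation for $y_1^{-N}$ is entirely analogous, and the inverse statement follows by running the same argument on the generator formulas for $(\R^W)^{-1}$ in \cref{thm:R-action-on-weyl}.

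The proof is mechanical once this template is in place. The main obstacle is bookkeeping: products over $N$th roots of unity generate $(-1)^{N-1}$ factors which combine with the $\xi^N = -1$ picked up each time the $q$-Weyl relation is iterated, and one must organize the computation so these signs cancel cleanly.
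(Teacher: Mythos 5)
Your proposal is correct and is essentially the paper's own argument: the paper's proof is a one-line appeal to the \(q\)-binomial theorem together with the generator formulas of \cref{thm:R-action-on-weyl}, which you have simply carried out in detail (telescoping the conjugates, using \(\prod_{k=0}^{N-1}(1-\omega^k u)=1-u^N\), and splitting \(A\) across the two tensor factors). One small sign slip: with your convention \(A \defeq g-1\) the conjugation formula should read \(x_1^{-k} g x_1^{k} = 1 + \xi^{-2k}A\); your displayed formula and the final identity \(1 - A^N = G\) correspond to \(A = 1-g\), and with that reading the computation goes through exactly as you describe.
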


\begin{proof}
  These follow from the $q$-binomial theorem and the action on the generators in \cref{thm:R-action-on-weyl}.
\end{proof}

When acting on the algebra itself (not the division algebra) the map \(\R^{W}\) is not always well-defined: one needs to ensure that each invertible generator is sent to a nonzero scalar.
Before we dealt with this by inverting a certain element.
Here it is more convenient to work directly with characters.

\begin{definition}
  The central subalgebra \(\Z_0^\W \defeq \CC[x^{\pm N}, y^{\pm N}, z^{\pm N}] \subseteq \W_\xi\) maps to \(\Z_0\) under \(\phi\).
  We write \(\Wchar\) for the set of characters \(\chi : \Z_0^\W \to \CC\).
  Such a character is determined by the numbers
  \[
    a = \chi(x^N), b = \chi(y^N), m = \chi(z^N).
  \]
  and it corresponds to the \(\Z_0\)-character with
  \begin{gather*}
    \chi(K^{N})  = a
    \\
    \chi(E^{N})  = b(a - m)
    \\
    \chi(F^{N}) = (ab)^{-1}(a - m^{-1}).
  \end{gather*}

  The full center of \(\W_\xi\) is \(\Z^\W = \Z_0^\W[z]\).
  Characters of this algebra are elements of \(\Wchar\) plus a choice of \(N\)th root of \(m = \chi(z^N)\).
  We denote the space of such characters \(\hat \chi\) by \(\Wcharhat\).
  There is an obvious \(N\)-fold covering map \(\Wcharhat \to \Wchar\).
\end{definition}

\begin{definition}
  \label{def:braiding}
  Let \(\chi_1, \chi_2 \in \Wchar\) and write \(\chi_i(x^N) = a_i, \chi_i(y^N) = b_i, \chi_i(z^N) = m_i\).
  We say the pair \((\chi_1, \chi_2)\) is \defemph{admissible} if the complex numbers defined by
  \begin{gather}
    \label{eq:a-transf-positive}
    \begin{aligned}
      a_{1'}
      &=
      a_1 A^{-1}
      \\
      a_{2'}
      &=
      a_2 A
      \\
      A &= 1 - \frac{m_1 b_1}{b_2} \left(1 - \frac{a_1}{m_1}\right)\left(1 - \frac{1}{m_2 a_2}\right)
    \end{aligned}
    \\
    \label{eq:b-transf-positive}
    \begin{aligned}
      b_{1'}
      &=
      \frac{m_2 b_2}{m_1}
      \left(
        1 - m_2 a_2 \left( 1 - \frac{b_2}{m_1 b_1} \right)
      \right)^{-1}
      \\
      b_{2'}
      &=
      b_1
      \left(
        1 - \frac{m_1}{a_1}\left( 1 - \frac{b_2}{m_1 b_1} \right)
      \right)
    \end{aligned}
    \\
    \label{eq:m-transf-positive}
    \begin{aligned}
      m_{1'}
      &= m_1
      &
      m_{2'}
      &= m_2
    \end{aligned}
  \end{gather}
  are not \(0\) or \(\infty\).
  In this case we write \((\chi_{2'} , \chi_{1'}) = B(\chi_1, \chi_2)\) where \(\chi_{1'}(x^N) = a_{1'}\) and so on.
  There is an obvious extension to elements of \(\Wcharhat\) by setting \(\hat \chi_1(z) = \hat \chi_{1'}(z)\) and \(\hat \chi_2(z) = \hat \chi_{2'}(z)\) and admissibility only depends on the image in \(\Wchar\).
\end{definition}

As before \(B\) is a partially defined map on pairs of characters.
The inverse map \(B^{-1}(\chi_1, \chi_2) = (\chi_{2'}, \chi_{1'})\) is given by
\begin{gather}
  \label{eq:a-transf-negative}
  \begin{aligned}
    a_{1'}
      &=
      a_1 \tilde A^{-1}
      \\
      a_{2'}
      &=
      a_2 \tilde A
      \\
      \tilde A
      &=
      1 - \frac{b_2}{m_1 b_1}\left(1 - m_1 a_1 \right)\left(1 - \frac{m_2}{a_2}\right).
  \end{aligned}
  \\
  \label{eq:b-transf-negative}
  \begin{aligned}
    b_{1'}
      &=
      \frac{m_2 b_2}{m_1}
      \left(
        1 - \frac{a_2}{m_2} \left( 1 - \frac{m_1 b_1}{b_2} \right)
      \right)
      \\
      b_{2'}
      &=
      b_1
      \left(
        1 - \frac{1}{m_1 a_1} \left( 1 - \frac{m_1 b_1}{b_2} \right)
      \right)^{-1}
  \end{aligned}
  \\
  \label{eq:m-transf-negative}
  \begin{aligned}
    m_{1'}
      &= m_1
      &
      m_{2'}
      &= m_2
  \end{aligned}
\end{gather}

\begin{proposition}
  \label{thm:braiding formulas}
  Whenever \(B(\chi_1, \chi_2) = (\chi_{2'}, \chi_{1'})\) we have
  \[
    (\hat \chi_1 \otimes \hat \chi_2)(x) = (\hat \chi_{1'} \otimes \hat \chi_{2'})\R^\W(x)
  \]
  for every \(x \in \Z^\W \otimes \Z^\W\).
\end{proposition}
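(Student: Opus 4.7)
The plan is to exploit the fact that both $(\hat\chi_1 \otimes \hat\chi_2)$ and $(\hat\chi_{1'} \otimes \hat\chi_{2'}) \circ \R^\W$ restrict to algebra homomorphisms from the center $\Z^\W \otimes \Z^\W$ to $\CC$ on the admissible locus, so it suffices to verify the identity on a generating set of the center. I would use the generators $z_1, z_2, x_1^{\pm N}, x_2^{\pm N}, y_1^{\pm N}, y_2^{\pm N}$, together with the explicit formulas for $\R^\W$ acting on them provided by \cref{prop:RW-action-on-center}. The case of $z_1$ and $z_2$ is immediate: $\R^\W$ fixes the $z_i$, \cref{def:braiding} imposes $m_{1'} = m_1$ and $m_{2'} = m_2$, and the extension from $\Wchar$ to $\Wcharhat$ is by definition the one with $\hat\chi_{i'}(z) = \hat\chi_i(z)$.

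The content is in the $x_i^{\pm N}$ and $y_i^{\pm N}$ equations. Take $x_1^N$ as a representative case. Using $\R^\W(x_1^N) = x_1^N G$, the identity becomes
\[
  a_1 = a_{1'} \cdot (\hat\chi_{1'} \otimes \hat\chi_{2'})(G),
\]
i.e., $(\hat\chi_{1'} \otimes \hat\chi_{2'})(G) = a_1/a_{1'} = A$ in the notation of \cref{def:braiding}. After substituting the primed quantities in terms of the unprimed ones, this reduces to a polynomial identity in $a_i, b_i, m_i$. The remaining generators $x_2^N$, $y_1^{-N}$, and $y_2^N$ are handled analogously. In fact, these six equations (together with $m_{i'} = m_i$) uniquely determine $(a_{i'}, b_{i'})$ in terms of $(a_i, b_i, m_i)$, and the transformation rules in \cref{def:braiding} are tailored precisely to solve them.

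The principal obstacle is bookkeeping rather than conceptual: the equations from $y_1^{-N}$ and $y_2^N$ both involve the expression $1 - b_2/(m_1 b_1)$ in slightly different positions, so verifying consistency requires careful tracking of inverses and common factors. Once the reduction to generators is in place, though, the verification is a routine computation that amounts to checking that the definitions were made correctly.
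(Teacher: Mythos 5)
Your proposal is correct and follows essentially the same route as the paper: reduce to the central generators $z_i, x_i^{\pm N}, y_i^{\pm N}$ and verify the identity using the explicit action of $\R^\W$ on the center from \cref{prop:RW-action-on-center}, with \cref{def:braiding} engineered to solve the resulting equations. The paper only adds the computational shortcut of evaluating $(\hat\chi_1 \otimes \hat\chi_2)\bigl((\R^W)^{-1}(x)\bigr) = (\hat\chi_{1'} \otimes \hat\chi_{2'})(x)$ on the unprimed characters, which avoids substituting primed quantities back in, but this is the same argument.
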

\begin{proof}
  This follows from the action of \(\R^\W\) on the center computed in \cref{prop:RW-action-on-center}.
  It is easiest to use
  \[
    (\hat \chi_1 \otimes \hat \chi_2)\left((\R^W)^{-1}(x)\right) = (\hat \chi_{1'} \otimes \hat \chi_{2'})(x)
  \]
  to compute \(B\).
\end{proof}

\subsection{Braiding of \texorpdfstring{$\U_\xi$}{Uξ}-modules.}
The map \(\mathcal{R}\) gives a braiding on  the algebra \(\U_{\xi}\), but we want to construct a braiding on \(\U_{\xi}\)-modules.
Given \(\U_\xi\)-modules \((\pi_{i}, V_{i})\) with characters \(\chi_{i}\) related as in \cref{thm:braiding formulas} a \defemph{holonomy \(R\)-matrix} is a linear map
\[
  R : V_{1} \otimes V_{2} \to V_{1'} \otimes V_{2'}
\]
intertwining \(\R\) in the sense that the diagram 
\begin{equation}
  \label{eq:R-mat-U-module-diagram}
  \begin{tikzcd}[row sep = large, column sep = large]
    \U_\xi^{\otimes 2} \arrow[r, "\R"] \arrow[d, swap, "\pi_{1} \otimes \pi_{2}"] & \U_\xi^{\otimes 2}[Y^{-1}] \arrow[d, "\pi_{1}' \otimes \pi_{2}'"] \\
    \End_\CC(V_1 \otimes V_2) \arrow[r, "a \mapsto R a R^{-1}"] & \End_\CC(V_1' \otimes V_2')
  \end{tikzcd}
\end{equation}
commutes.
Concretely, this means that
\[
  (\pi_{1'} \otimes \pi_{2'})( \R(u) ) R = R  (\pi_{1} \otimes \pi_{2})(u)
\]
for every \(u \in \U_{\xi} \otimes \U_{\xi}\).
Here we emphasize that \(R\) depends not only on \(V_{1}\) and \(V_{2}\) but also \(V_{1'}\) and \(V_{2'}\) as in general these will not be isomorphic to \(V_{1}\) and \(V_{2}\).
This is in contrast to modules over a quasitriangular Hopf algebra, where the universal \(R\)-matrix canonically defines a braiding for any pair of modules.
As such, a holonomy braiding is a choice of both a continuous family of modules \(\rep{\hat{\chi}}\) for each character \(\hat{\chi}\) and also of braiding maps \(R\) between them satisfying a parametrized Yang-Baxter equation.
In more geometric language we seek to construct a bundle of \(\U_{\xi}\)-modules over \(\spec \Z\) (an \(\nr\)-fold cover of \(\spec \Z_{0} = \slg^*\)) and a family of braiding maps between them.

Recall that if \(V\) is a \(\W_{\xi}\)-module it becomes a \(\U_{\xi}\)-module via
\[
  u \cdot x = \phi(u) \cdot x \text{ for } x \in V, u \in \U_{\xi}.
\]
Under this correspondence a character \(\hat \chi \in \Wcharhat\) with
\begin{align*}
  \hat{\chi}(x^{\nr}) &= a
                      &
  \hat{\chi}(y^{\nr}) &= b
                      &
  \hat{\chi}(z) &= \omega^{\mu}
\end{align*}
corresponds to the \(\Z\)-character
\begin{align*}
  \hat{\chi}(\phi(K^{\nr}))  &= a
                             &
  \hat{\chi}(\phi(E^{\nr}))  &= b(a - m)
  \\
  \hat{\chi}(\phi(F^{\nr})) &= (ab)^{-1}(a - m^{-1})
                            &
  \hat{\chi}(\phi(\Omega)) &= \omega^{\mu + 1/2} + \omega^{-(\mu + 1/2)}
\end{align*}
hence to the group element
\[
  \left(
      \begin{bmatrix}
        a & b(a-m) \\
        0 & 1
      \end{bmatrix}
      ,
      \begin{bmatrix}
        1 & 0 \\
        b^{-1}(a - m^{-1}) & a
      \end{bmatrix}
  \right)
      \in \slg^*
\]
whose image in \(\slg\) is
\begin{equation}
  \label{eq:defactorization image}
  \psi(\chi) = 
  \begin{bmatrix}
    a & -b(a-m) \\
    b^{-1}(a-m^{-1}) & m + m^{-1} -a
  \end{bmatrix}
  \in \slg
\end{equation}
The map \(\psi : \Wchar \to \slg\) defined in \cref{eq:defactorization image} is not surjective, but it is surjective up to gauge equivalence: for every \(g \in \slg\) there is an \(h\) so that \(\psi(\chi) = h^{-1} g h\).

Thus for every pair \((\hat \chi_{1}, \hat \chi_{2})\) of admissible \(\W_{\xi}\)-characters we seek to define modules \(\rep{\hat \chi_{i}}\) and linear maps \(R = R(\hat \chi_{1}, \hat \chi_{2})\) so that each diagram 
\begin{equation}
  \label{eq:R-mat-module-diagram}
  \begin{tikzcd}[row sep = large, column sep = large]
    \W_\xi / I_{\hat \chi_{1}}
    \otimes
    \W_\xi / I_{\hat \chi_{2}}
    \arrow[r, "\R^{\W}"] \arrow[d, swap, "\pi_{\hat \chi_1} \otimes \pi_{\hat \chi_2}"] &
    \W_\xi / I_{\hat \chi_{1'}}
    \otimes
    \W_\xi / I_{\hat \chi_{2'}}
    \arrow[d, "\pi_{\hat \chi_{1'}} \otimes \pi_{\hat \chi_{2'}}"] \\
    \End(V(\hat \chi_{1}) \otimes V(\hat \chi_{2})) \arrow[r, "a \mapsto R a R^{-1}"] & \End(V(\hat \chi_{1'}) \otimes V(\hat \chi_{2'}))
  \end{tikzcd}
\end{equation}
commutes, where $\pi_{\hat \chi_i} : \W_\xi \to \End(V(\hat \chi_i))$ is the structure map of the representation $V(\hat \chi_i)$ and \(I_{\hat \chi}\) is the ideal generated by the kernel of the central character.
We can use $\phi$ to pull back the solution of (\ref{eq:R-mat-module-diagram}) to a solution of (\ref{eq:R-mat-U-module-diagram}), as the latter is the composition of the diagrams (\ref{eq:automorphism-pullback}) and (\ref{eq:R-mat-module-diagram}) after taking appropriate specializations.

The modules \(\rep{\hat \chi}\) form a nontrivial vector bundle over \(\spec \Wcharhat\); to describe the matrices \(R\) we need to choose concrete \(\W_{\xi}\)-module structures on them, i.e.\ we must locally trivialize.
The holonomy \(R\)-matrices will depend on this choice of trivialization, but we will show that it has a geometric interpretation that allows one to define an appropriate Yang-Baxter equation.
We further explore the vector bundle perspective in \cite{McPhailSnyderVolume}.

\subsection{\texorpdfstring{\(\W_\xi\)}{Wξ}-modules}
\label{sec:cyclic-reps}

Here we define a local trivialization of the bundle of irreducible representations of \(\W_{\xi}\).

\begin{definition}
  \label{def:standard-rep}
  Recall that \(\omega = \xi^2 = \exp(2\pi i /N)\)  and set \(\omega^{x} =\exp(2 \pi i x/N)\) for all $x\in \CC$.
  For any $\alpha, \beta, \mu \in \CC$, let $\rep{\alpha, \beta, \mu} = (\pi_{\alpha, \beta, \mu}, \CC^N)$ be the \(\W_\xi\)-module defined by
  \begin{equation}
    \label{eq:weight-basis}
    \pi_{\alpha, \beta, \mu}(x) v_n = \omega^{\alpha - n} v_n,
    \quad
    \pi_{\alpha, \beta, \mu}(y) v_n = \omega^{\beta} v_{n-1},
    \quad
    \pi_{\alpha, \beta, \mu}(z) v_n = \omega^{\mu} v_n
  \end{equation}
  with indices considered modulo \(N\).
  Since \(x = \phi(K)\) and \(y\) act diagonalizably on \(\rep{\alpha, \beta, \mu}\) we call it a \defemph{weight module}.
\end{definition}
\(\rep{\alpha, \beta, \mu}\) has character \(\hat \chi \in \Wcharhat\) defined by 
\[
  \chi(x^{\nr}) = \omega^{\nr \alpha}
  \ \ \pi(y^\nr) = \omega^{\nr \beta}
  \ \
  \pi(z) = \omega^{\mu}
\]
and it is clear that for any integers  \(k_i\),
\[
  \rep{\alpha + k_1, \beta + k_2, \mu + \nr k_3}
  \simeq
  \rep{\alpha , \beta , \mu }
\]
so the isomorphism type of \(\rep{\alpha, \beta, \mu}\) is completely determined by \(\hat \chi\).
The parameters \(\alpha, \beta, \mu\) are local coordinates on the bundle of modules.

\begin{theorem}
  \label{thm:R-matrix-exists}
  For any admissible characters \((\hat \chi_1, \hat \chi_2)\) an invertible linear operator 
  \[
    R :
    \rep{\hat{\chi}_{1}} \otimes \rep{\hat{\chi}_{2}}
    \to
    \rep{\hat{\chi}_{1'}} \otimes \rep{\hat{\chi}_{2'}}
  \]
  satisfying (\ref{eq:R-mat-module-diagram}) exists and is unique up to an overall scalar.
\end{theorem}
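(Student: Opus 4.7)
The plan is to reduce the statement to a Schur's-lemma argument, after verifying that both sides of \cref{eq:R-mat-module-diagram} are simple \(\W_\xi^{\otimes 2}\)-modules with matching central characters.

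First I would check that each \(V(\hat \chi)\) is an absolutely simple \(\W_\xi\)-module of dimension \(\nr\). From \cref{eq:weight-basis}, \(\pi(x)\) has \(\nr\) distinct eigenvalues \(\omega^{\alpha - n}\) on one-dimensional eigenlines, while \(\pi(y)\) cyclically permutes these eigenlines up to a nonzero scalar; any \(\W_\xi\)-stable subspace is forced to contain some \(v_n\) (by \(\pi(x)\)-invariance) and then all of them (by iterating \(\pi(y)\)). Tensor products of absolutely simple modules over two algebras are simple over the tensor product algebra, so \(V(\hat \chi_1) \otimes V(\hat \chi_2)\) and \(V(\hat \chi_{1'}) \otimes V(\hat \chi_{2'})\) are both simple \(\W_\xi^{\otimes 2}\)-modules of dimension \(\nr^2\).

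Second, I would interpret the admissibility hypothesis as the precise condition that \(\R^\W\) restricts from an automorphism of \(\Div(\W_\xi^{\otimes 2})\) to an algebra homomorphism from a localization \(\W_\xi^{\otimes 2}[S^{-1}]\) (at certain central elements) into the endomorphism algebra \(\End(V(\hat \chi_{1'}) \otimes V(\hat \chi_{2'}))\). Examining \cref{prop:RW-action-on-center}, the only possible denominators introduced by \(\R^\W\) come from \(G\), \(\widetilde G\), and powers of the \(z_i\); the numerical conditions in \cref{def:braiding} are exactly the requirement that these denominators are nonzero under the relevant characters, so the composition \(\pi' \defeq (\pi_{\hat \chi_{1'}} \otimes \pi_{\hat \chi_{2'}}) \circ \R^\W\) is a well-defined representation of that localized algebra on \(V(\hat \chi_{1'}) \otimes V(\hat \chi_{2'})\).

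Third, I would apply \cref{thm:braiding formulas}, which says that the central character of \(\pi'\) agrees with the central character \(\hat \chi_1 \otimes \hat \chi_2\) of \(\pi_{\hat \chi_1} \otimes \pi_{\hat \chi_2}\). Simple finite-dimensional modules over \(\W_\xi^{\otimes 2}\) (localized as above) are classified up to isomorphism by their central character — concretely because, given values for \(x^N, y^N, z\), the module \(V(\hat \chi)\) from \cref{def:standard-rep} is the unique simple \(\W_\xi\)-module realizing them up to isomorphism, and uniqueness passes to tensor products. Hence there exists an invertible intertwiner \(R\) realizing \cref{eq:R-mat-module-diagram}, and Schur's lemma applied to the absolutely simple module \(V(\hat \chi_1) \otimes V(\hat \chi_2)\) gives uniqueness up to a nonzero scalar. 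The main technical obstacle is the bookkeeping of the localization, so that ``\(\R^\W\) acts on matrices via conjugation by \(R\)'' is an honest identity of algebra maps rather than a generic statement; this is precisely what the admissibility condition of \cref{def:braiding} is designed to ensure, and once it is in place the classification of simple modules finishes the argument.
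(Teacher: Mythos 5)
Your proposal is correct and takes essentially the same route as the paper: both rest on the simplicity of the modules \(V(\hat\chi_i)\), the identification of the relevant quotients of \(\W_\xi^{\otimes 2}\) with matrix algebras (admissibility making the denominators of \(\R^{\W}\) invertible and \cref{thm:braiding formulas} matching the central characters), and a standard rigidity statement at the end. The only cosmetic difference is that the paper invokes the fact that every automorphism of a matrix algebra is inner, whereas you phrase the same fact as uniqueness of the simple module with a given central character together with Schur's lemma.
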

\begin{proof}

  For each \(i\) the module $V(\hat \chi_i)$ is an irreducible \(\W_\xi\)-module,%
  \note{
    It may be reducible as a \(\U_\xi\)-module if \(\psi(\chi_i)\) is a central element of \(\slg\).
  }
  so its endomorphism algebra is isomorphic to the algebra
  \[
    W_\xi/I_{\hat \chi_i}.
  \]
  We see that $\R$ induces an automorphism
  \[
    \W_\xi/I_{\hat \chi_1} \otimes \W_\xi/I_{\hat \chi_2} \to
    \W_\xi/I_{\hat \chi_1'} \otimes \W_\xi/I_{\hat \chi_2'}
  \]
  hence an automorphism
  \[
    \End( (\CC^N)^{\otimes 2}) \to \End( (\CC^N)^{\otimes 2})
  \]
  of matrix algebras.
  Any such automorphism is inner and given by conjugation by some invertible matrix $R$, unique up to an overall scalar.
\end{proof}

\section{The \texorpdfstring{\(\R\)}{R}-matrix in the Fourier dual basis}
\label{sec:R-matrix}

\subsection{Recursions for the \texorpdfstring{\(\R\)}{R}-matrix coefficients}
\label{subsec:recurrences}

\begin{marginfigure}
  %% Creator: Inkscape 1.1.2 (0a00cf5339, 2022-02-04), www.inkscape.org
%% PDF/EPS/PS + LaTeX output extension by Johan Engelen, 2010
%% Accompanies image file 'crossing-regions.pdf' (pdf, eps, ps)
%%
%% To include the image in your LaTeX document, write
%%   \input{<filename>.pdf_tex}
%%  instead of
%%   \includegraphics{<filename>.pdf}
%% To scale the image, write
%%   \def\svgwidth{<desired width>}
%%   \input{<filename>.pdf_tex}
%%  instead of
%%   \includegraphics[width=<desired width>]{<filename>.pdf}
%%
%% Images with a different path to the parent latex file can
%% be accessed with the `import' package (which may need to be
%% installed) using
%%   \usepackage{import}
%% in the preamble, and then including the image with
%%   \import{<path to file>}{<filename>.pdf_tex}
%% Alternatively, one can specify
%%   \graphicspath{{<path to file>/}}
%% 
%% For more information, please see info/svg-inkscape on CTAN:
%%   http://tug.ctan.org/tex-archive/info/svg-inkscape
%%
\begingroup%
  \makeatletter%
  \providecommand\color[2][]{%
    \errmessage{(Inkscape) Color is used for the text in Inkscape, but the package 'color.sty' is not loaded}%
    \renewcommand\color[2][]{}%
  }%
  \providecommand\transparent[1]{%
    \errmessage{(Inkscape) Transparency is used (non-zero) for the text in Inkscape, but the package 'transparent.sty' is not loaded}%
    \renewcommand\transparent[1]{}%
  }%
  \providecommand\rotatebox[2]{#2}%
  \newcommand*\fsize{\dimexpr\f@size pt\relax}%
  \newcommand*\lineheight[1]{\fontsize{\fsize}{#1\fsize}\selectfont}%
  \ifx\svgwidth\undefined%
    \setlength{\unitlength}{102.79326153bp}%
    \ifx\svgscale\undefined%
      \relax%
    \else%
      \setlength{\unitlength}{\unitlength * \real{\svgscale}}%
    \fi%
  \else%
    \setlength{\unitlength}{\svgwidth}%
  \fi%
  \global\let\svgwidth\undefined%
  \global\let\svgscale\undefined%
  \makeatother%
  \begin{picture}(1,1.29135843)%
    \lineheight{1}%
    \setlength\tabcolsep{0pt}%
    \put(0,0){\includegraphics[width=\unitlength,page=1]{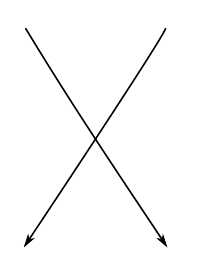}}%
    \put(0.04113266,1.19662491){\makebox(0,0)[lt]{\lineheight{1.25}\smash{\begin{tabular}[t]{l}$2$\end{tabular}}}}%
    \put(0.0411314,0.02664476){\makebox(0,0)[lt]{\lineheight{1.25}\smash{\begin{tabular}[t]{l}$1'$\end{tabular}}}}%
    \put(0.75477955,1.19663464){\makebox(0,0)[lt]{\lineheight{1.25}\smash{\begin{tabular}[t]{l}$1$\end{tabular}}}}%
    \put(0.75477661,0.01715746){\makebox(0,0)[lt]{\lineheight{1.25}\smash{\begin{tabular}[t]{l}$2'$\end{tabular}}}}%
    \put(0.42720925,0.85286315){\makebox(0,0)[lt]{\lineheight{1.25}\smash{\begin{tabular}[t]{l}$\lW$\end{tabular}}}}%
    \put(0.42716249,0.40597543){\makebox(0,0)[lt]{\lineheight{1.25}\smash{\begin{tabular}[t]{l}$\lE$\end{tabular}}}}%
    \put(0.63408158,0.61104761){\makebox(0,0)[lt]{\lineheight{1.25}\smash{\begin{tabular}[t]{l}$\lN$\end{tabular}}}}%
    \put(0.22946709,0.61103793){\makebox(0,0)[lt]{\lineheight{1.25}\smash{\begin{tabular}[t]{l}$\lS$\end{tabular}}}}%
  \end{picture}%
\endgroup%

  \caption{Labels for the segments and the regions near a crossing.}
  \label{fig:crossing-regions}
\end{marginfigure}

We can now explicitly compute the matrix coefficients of \(R\) as a function of the geometric parameters \(\hat \chi_{i}\).
We begin by fixing some terminology and conventions.

\begin{definition}
  We say a crossing is \defemph{\(\chi\)-colored} if it its segments (labeled as in \cref{fig:crossing-regions}) are assigned characters \(\hat \chi_i\) satisfying
  \begin{equation*}
    (\hat \chi_{2'}, \hat \chi_{1}') =
    \begin{cases}
      B(\hat \chi_{1}, \hat \chi_{2}) & \text{for a positive crossing}
      \\
      B^{-1}(\hat \chi_{1}, \hat \chi_{2}) & \text{for a negative crossing}
    \end{cases}
  \end{equation*}
\end{definition}

\begin{definition}
  A \defemph{log-coloring} of a \(\chi\)-colored crossing is a choice of complex numbers \(\beta_{i}\) for the segments, \(\gamma_{i}\) for the regions, and \(\mu_{1}, \mu_{2}\) for the components so that
  \[
    \begin{gathered}
      \hat \chi_{i}(x^{\nr}) = \omega^{\nr \alpha_i}
      \\
      \hat \chi_{i}(y^{\nr}) = \omega^{\nr \beta_i}
      \\
      \hat \chi_{i}(z) = \omega^{ \mu_i}
    \end{gathered}
  \]
  for each segment, where the segment parameters \(\alpha_{i}\) are determined by the region parameters as
  \begin{align*}
    \alpha_1 &= \gamma_{\lW}-\gamma_{\lN}
             &
    \alpha_{2'} &= \gamma_{\lE} - \gamma_{\lN}
    \\
    \alpha_2 &= \gamma_{\lS}- \gamma_{\lW}
             &
    \alpha_{1'} &= \gamma_{\lS} - \gamma_{\lE}
  \end{align*}
  and the labels are as in \cref{fig:crossing-regions}.
\end{definition}

Because \(a_1 a_2 = a_{1'} a_{2'}\) it is natural to require \(\alpha_1 + \alpha_2 = \alpha_{1'} + \alpha_{2'}\), and this is equivalent to a choice of region parameters \(\gamma_j\) as above.
Just as we associate an \(R\)-matrix to a crossing we assign the parametrized \(R\)-matrices of \cref{thm:R-matrix-exists} to \(\chi\)-colored crossings.
Later in \cref{sec:braid groupoids} we will show our \(R\)-matrices satisfy a version of the Yang-Baxter equation, subject to a geometric condition on the log-colorings.

The choice of log-coloring fixes identifications \(\rep{\hat \chi_{i}} \iso \rep{\alpha_{i}, \beta_{i}, \mu_{i}}\) with the weight modules of \cref{def:standard-rep}.
It turns out to be easier to work in the Fourier dual basis \(\set{\vbh{n}}\) of \(\rep{\alpha, \beta, \mu}\) defined by 
\begin{equation}
  \label{eq:non-weight-basis}
  \pi_{\alpha, \beta, \mu}(x) \vbh n = \omega^{\alpha} \vbh{n-1},
  \quad
  \pi_{\alpha, \beta, \mu}(y) \vbh n = \omega^{\beta + n} \vbh n,
  \quad
  \pi_{\alpha, \beta, \mu}(z) \vbh n = \omega^{\mu} \vbh n.
\end{equation}
It is related to the weight basis by
\begin{equation}
  \label{eq:basis-and-dual-basis}
  \vbh n \defeq \sum^{\nr-1}_{k=0} \omega^{nk} \vb k
  \text{ and }
  \vb n = \frac{1}{\nr} \sum_{k=0}^{\nr-1} \omega^{-n k} \vbh k
  .
\end{equation}
Our goal is to determine the matrix coefficients \(\rmat{n_1}{ n_2}{n_1'}{n_2'}\) defined by 
\begin{equation}
  \label{eq:R-matrix-coefficients}
  R \cdot \vbh{n_1 n_2} = \sum_{n_1' n_2'} \rmat{n_1}{ n_2}{n_1'}{n_2'} \vbh{n_1' n_2'}
\end{equation}
where we abbreviate $\vbh{n_1 n_2} = \vbh{n_1} \otimes \vbh{n_2}$.
Here and below all sums are over $\ZZ/\nr\ZZ$.

Assume the crossing is positive.
Abbreviate the structure maps of the representations at the crossing as
\[
  \pi = \pi_1 \otimes \pi_2, \ \ \pi' = \pi_{1'} \otimes \pi_{2'}.
\]
Commutativity of the diagram in \eqref{eq:R-mat-module-diagram} requires matrix equations
\begin{align}
  \label{eq:intertwiner-rel}
  R \pi(u) &=  \pi'\left(\R(u)\right) R, \ \ u \in \W_\xi, \\
  \intertext{and}
  \label{eq:intertwiner-rel-inv}
  \pi'(u) R &= R \pi\left(\R^{-1}(u)\right), \ \ u \in \W_\xi.
\end{align}

Setting $u = y_1^{-1}$ in \eqref{eq:intertwiner-rel} gives the relation
\begin{equation}
  \label{eq:yinv-rel}
  \begin{aligned}
    R \pi(y_1^{-1}) &= \pi'\left( y_2^{-1} + (y_1^{-1} - z_2^{-1} y_2^{-1})x_2^{-1}\right) R \\
                    &= \pi'\left( y_2^{-1} + x_2^{-1}(y_1^{-1} - \omega^{-1} z_2^{-1} y_2^{-1})\right) R
  \end{aligned}
\end{equation}
In terms of the matrix coefficients \eqref{eq:R-matrix-coefficients} this becomes
\begin{align*}
  &  \sum_{n_1' n_2'} \rmat{n_1}{n_2}{n_1'}{n_2'}  \omega^{-\beta_1 - n_1}  \vbh{n_1' n_2'} \\
  &= \sum_{n_1' n_2'} \rmat{n_1}{ n_2}{n_1'}{ n_2'} \omega^{-\beta_{2'} - n_2'} \vbh{n_1' n_2'} \\
  &\phantom{=} + \sum_{n_1' n_2'} \rmat{n_1}{ n_2}{n_1'}{ n_2'}\left[ \omega^{-\beta_{1'} - n_1'}  - \omega^{-\mu_2} \omega^{-\beta_{2'} - n_2' - 1} \right] \omega^{-\alpha_{2'}} \vbh{n_1', n_2'+1}
\end{align*}
which we can rewrite as the recursion
\[
  \rmat{n_1}{n_2}{n_1'}{n_2'} \left[ \omega^{-\beta_1 - n_1}  - \omega^{-\beta_{2'} - n_2'} \right]
  =
  \rmat{n_1}{n_2}{n_1',}{n_2'-1} \omega^{-\alpha_{2'}} \left[ \omega^{-\beta_{1'} - n_1'} - \omega^{- \mu_2 - \beta_{2'} - n_2'} \right]
  .
\]
In a slightly more convenient form, this is
\begin{align*}
  \rmat{n_1}{n_2}{n_1'}{n_2' }
  &=
  \rmat{n_1}{n_2}{n_1',}{n_2' -1}
  \omega^{-\alpha_{2'} - \mu_2}
  \frac
  {1 - \omega^{\mu_2 + \beta_{2'} - \beta_{1'} - 1} \omega^{n_2' - n_1'}}
  {1 - \omega^{\beta_{2'} - \beta_{1}} \omega^{n_2' - n_1}} 
  .
\end{align*}
Abbreviate
\begin{align*}
  \zeta_{\lN}^0
  &=
  \beta_{2'} - \beta_1
  \\
  \zeta_{\lW}^0
  &=
  \beta_{2} - \beta_{1} - \mu_1 
  \\
  \zeta_{\lS}^0
  &=
  \beta_{2} - \beta_{1'} + \mu_2 - \mu_1 
  \\
  \zeta_{\lE}^0
  &=
  \beta_{2'} - \beta_{1'} + \mu_2 
\end{align*}
If any \(\zeta_i^0 \in \ZZ\) we say that the (coloring of the) crossing is \defemph{pinched}; this condition depends only on the characters \(\chi_i\), not the logarithms, and has a geometric interpretation (\cref{rem:pinched eigenlines}).
We can obtain the \(R\)-matrix at pinched crossings as a limit of the general case (see \cref{sec:pinched-limit}), so we exclude them for now.

\begin{theorem}
  \label{thm:R-recurrences}
  At a positive crossing that is not pinched the matrix coefficients satisfy recurrence relations
  \begin{align}
    \label{eq:R-recurrence-i}
    \rmat{n_1}{n_2}{n_1'}{n_2' }
    &=
    \rmat{n_1}{n_2}{n_1',}{n_2' -1}
    {\omega^{-\alpha_{2'} - \mu_2} }
    \frac
    {1 - \omega^{\zeta_E^0 + n_2' - n_1'}}
    {1 - \omega^{\zeta_N^0 +  n_2' - n_1}} 
    \\
    \label{eq:R-recurrence-ii}
    \rmat{n_1}{n_2}{n_1',}{n_2'}
    &=
    \rmat{n_1}{n_2}{n_1'-1,}{n_2'}
    \omega^{-\alpha_{1'} + \mu_1}
    \frac
    {1 - \omega^{\zeta_S^0 - n_2 - n_1' + 1} }
    {1 - \omega^{\zeta_E^0 + n_2' - n_1' + 1 } }
    \\
    \label{eq:R-recurrence-iii}
    \rmat{n_1,}{n_2}{n_1'}{n_2'}
    &=
    \rmat{n_1,}{n_2 -1}{n_1'}{n_2'}
    \omega^{\alpha_2 + \mu_2 + 1}
    \frac
    {1 - \omega^{\zeta_W^0 - 1 + n_2 - n_1} }
    {1 -  \omega^{\zeta_S^0 + n_2 - n_1'}}
    \\ 
    \label{eq:R-recurrence-iv}
    \rmat{n_1}{n_2}{n_1'}{n_2'}
    &=
    \rmat{n_1 - 1,}{n_2}{n_1'}{n_2'}
    \omega^{\alpha_{1} - \mu_1 - 1}
    \frac
    {1 - \omega^{\zeta_N^0 + n_2' - n_1 +1 }}
    {1 -  \omega^{\zeta_W^0 - 1 + n_2 - n_1 +1} } 
  \end{align}
  which determine $R$ uniquely up to an overall scalar.
\end{theorem}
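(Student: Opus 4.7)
The plan is to establish each of the four recurrences by applying the intertwiner relations \eqref{eq:intertwiner-rel} and \eqref{eq:intertwiner-rel-inv} to the generators of $\W_\xi \otimes \W_\xi$ in turn, and then to deduce uniqueness from the explicit shift structure of the recurrences.

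First, the derivation of \eqref{eq:R-recurrence-i} from $u = y_1^{-1}$ in \eqref{eq:intertwiner-rel} is essentially completed in the discussion preceding the theorem statement; only the translation to the variables $\zeta_\bullet^0$ remains. For \eqref{eq:R-recurrence-ii} I would take $u = y_2$ in \eqref{eq:intertwiner-rel} and use the image $\R^\W(y_2) = (z_1/z_2)\,y_1 + (y_2 - z_2^{-1} y_1) x_1$ from \cref{thm:R-action-on-weyl}: the first summand acts diagonally in the hatted basis while the $x_1$ in the second produces the shift $n_1' \to n_1' - 1$, giving \eqref{eq:R-recurrence-ii} after equating coefficients in the expansion $R \cdot \vbh{n_1 n_2} = \sum \rmat{n_1}{n_2}{n_1'}{n_2'} \vbh{n_1' n_2'}$. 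To obtain \eqref{eq:R-recurrence-iii} and \eqref{eq:R-recurrence-iv} — the recurrences in the unprimed indices — I would use the inverse intertwiner relation \eqref{eq:intertwiner-rel-inv} with $u = y_1^{-1}$ and $u = y_2$ respectively, invoking the formulas for $(\R^\W)^{-1}$ in \cref{thm:R-action-on-weyl}; this time the relevant shift operators $x_2$ and $x_1^{-1}$ appear on the right of $R$ and therefore act in the unprimed basis.

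Once the four recurrences are in hand, uniqueness is a bookkeeping exercise. Starting from a single coefficient, say $\rmat{0}{0}{0}{0}$, successive applications of \eqref{eq:R-recurrence-iii} and \eqref{eq:R-recurrence-iv} determine every $\rmat{n_1}{n_2}{0}{0}$ with $n_1, n_2 \in \ZZ/\nr\ZZ$, and then \eqref{eq:R-recurrence-i} and \eqref{eq:R-recurrence-ii} extend this to all $\rmat{n_1}{n_2}{n_1'}{n_2'}$. The non-pinched hypothesis $\zeta_\bullet^0 \notin \ZZ$ guarantees that none of the factors $1 - \omega^{\zeta_\bullet^0 + k}$ that appear in the denominators vanishes for any integer $k$, so each matrix entry is determined as a nonzero multiple of the initial scalar, leaving exactly one free parameter as claimed.

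The only real obstacle I anticipate is computational rather than conceptual: carefully matching the raw shift-and-divide relations produced by each intertwiner calculation to the clean form quoted in the theorem — keeping track of the $\pm 1$ offsets in the indices and in the exponents of $\omega$, the overall prefactors $\omega^{\pm \alpha \pm \mu}$, and the dictionary between the four compact variables $\zeta_\bullet^0$ and the raw parameters $\beta_i, \mu_i$. Consistency among the four recurrences — in the sense that applying them in different orders must give the same answer — does not need to be verified by hand, since it is automatic from the existence part of \cref{thm:R-matrix-exists}.
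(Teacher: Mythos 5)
Your proposal is correct and follows essentially the same route as the paper: the first recurrence comes from $u = y_1^{-1}$ in \eqref{eq:intertwiner-rel} (the computation preceding the theorem), the remaining three from applying \eqref{eq:intertwiner-rel} to $y_2$ and \eqref{eq:intertwiner-rel-inv} to $y_1^{-1}$ and $y_2$, with the non-pinched hypothesis guaranteeing the propagation from a single coefficient is well-defined and consistency/existence supplied by \cref{thm:R-matrix-exists}. The paper's proof is just a terser statement of exactly this argument.
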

\begin{proof}
  We found the first recurrence above.
  The other three relations can be derived in a similar way by applying \ref{eq:intertwiner-rel} to $y_2$ and \ref{eq:intertwiner-rel-inv} to  $y_1^{-1}$ and $y_2$.
  The relations are clearly sufficient to determine $R$ up to a scalar, and then existence and uniqueness follow from Theorem \ref{thm:R-matrix-exists}.
\end{proof}

\subsection{The normalized \texorpdfstring{\(R\)}{R}-matrix}

We need to define some more log-parameters associated to a log-colored crossing \(C\).
Set \(\epsilon = 1\) if \(C\) is positive and \(\epsilon = -1\) if it is negative.
If \(C\) is not pinched, 
\begin{equation*}
  K= 
  \frac{e^{2\pi i \gamma_{\lN}}}{1 - \left(b_{2'}/b_{1}\right)^{\epsilon}}
\end{equation*}
is not \(\infty\), so we can choose \(\kappa \in \mathbb{C}\) with
\[
  e^{2\pi i \kappa} = K.
\]
We will show in \cref{thm:R-matrix transformation} that the \(R\)-matrix is independent of the choice of \(\kappa\), so we do not consider it to be part of a log-coloring.
Extending our earlier abbreviations, define
  \begin{align}
    \label{eq:flattening-N}
    \zeta_{\lN}^0
    &=
    \epsilon (\beta_{2'} - \beta_1)
    &
    \zeta_{\lN}^1
    &=
    ( \kappa - \gamma_N)
    \\
    \label{eq:flattening-W}
    \zeta_{\lW}^0
    &=
    \epsilon(\beta_{2} - \beta_{1} - \mu_1 )
    &
    \zeta_{\lW}^1
    &=
    ( \kappa - \gamma_{\lW} + \epsilon \mu_1)
    \\
    \label{eq:flattening-S}
    \zeta_{\lS}^0
    &=
    \epsilon(\beta_{2} - \beta_{1'} + \mu_2 - \mu_1 )
    &
    \zeta_{\lS}^1
    &=
    (\kappa - \gamma_{\lS} + \epsilon (\mu_1 - \mu_2))
    \\
    \label{eq:flattening-E}
    \zeta_{\lE}^0
    &=
    \epsilon(\beta_{2'} - \beta_{1'} + \mu_2 )
    &
    \zeta_{\lE}^1
    &=
    ( \kappa - \gamma_{\lE} - \epsilon \mu_2)
  \end{align}
For \(j \in \set{{\lN}, {\lW}, {\lS}, {\lE}}\) we have
\[
  e^{2\pi i \zeta_j^1} = 
  \frac{
    1
    }{
    1 - e^{2\pi i \zeta_j^0}
  }
\]
so for \(n \in \ZZ\) we can consider the \defemph{quantum dilogarithm}
\(
  \qlf{\zeta^0, \zeta^1}{n}
\)
discussed in \cref{sec:qlog-definitions}.
For \(n \ge 0\),
\begin{equation}
  \label{eq:qlf recurrence}
  \qlf{\zeta^0, \zeta^1}{n}
  =
  \qlf{\zeta^0, \zeta^1}{0}
  \frac{
    \omega^{-n\zeta^1}
  }{
    (1 - \omega^{\zeta^0 + 1})
    \cdots
    (1 - \omega^{\zeta^0 + n})
  }
\end{equation}
is an \(n\)-independent scalar times a power of \(\omega^{\zeta^1}\) times a shifted \(q\)-factorial at \(q = \omega\).
The relation \(e^{2\pi i \zeta^1} = 1/(1 - e^{2\pi i \zeta^0})\) ensures \(\qlfname\) is periodic modulo \(\nr\) in the integer argument.

\begin{definition}
  \label{def:R-mat}
  The \defemph{\(R\)-matrix} associated to a positive, log-colored crossing is
  \begin{equation}
    \label{eq:R-mat-positive}
    \begin{aligned}
      \widehat R_{n_1 n_2}^{n_1' n_2'}
      =
      {}
      &\frac{
        \omega^{-(\nr-1)(\zeta_{\lW}^0 + \zeta_{\lW}^1)}
      }{
        \nr
      }
      \\
      &\times
      \omega^{n_2 - n_1}
      \frac{
        \qlf{\zeta_{\lN}^0, \zeta_{\lN}^1}{n_2' - n_1}
        \qlf{\zeta_{\lS}^0, \zeta_{\lS}^1}{n_2 - n_1'}
        }{
          \qlf{\zeta_{\lW}^0, \zeta_{\lW}^1}{n_2 - n_1 - 1}
          \qlf{\zeta_{\lE}^0, \zeta_{\lE}^1}{n_2' - n_1'}
      }.
    \end{aligned}
  \end{equation}
  and to a negative crossing is
  \begin{equation}
    \label{eq:R-mat-negative}
    \begin{aligned}
      \rmatm{n_1}{n_2}{n_1'}{n_2'}
      &=
      \frac{
        \omega^{(\nr-1)(\zeta_{\lE}^0 + \zeta_{\lE}^1 - \zeta_{\lS}^0 -  \zeta_{\lS}^1 -  \zeta_{\lN}^0 - \zeta_{\lN}^1)}
        }{
        \nr
      }
      \\
      &\phantom{=}\times
      \omega^{n_1 - n_2}
      \frac{
        \qlf{\zeta_{\lW}^0, \zeta_{\lW}^1}{n_1 - n_2}
        \qlf{\zeta_{\lE}^0, \zeta_{\lE}^1}{n_1' - n_2' - 1}
      }{
        \qlf{\zeta_{\lN}^0, \zeta_{\lN}^1}{n_1 - n_2' - 1}
        \qlf{\zeta_{\lS}^0, \zeta_{\lS}^1}{n_1' - n_2 - 1}
      }
    \end{aligned}
  \end{equation}

  We call the \(\U_\xi\)-module morphism
  \begin{equation}
    \label{eq:braiding}
    \tau R : 
    \rep{\hat{\chi}_{1}}
    \otimes
    \rep{\hat{\chi}_{2}}
    \to
    \rep{\hat{\chi}_{2'}}
    \otimes
    \rep{\hat{\chi}_{1'}},
    \vbh{n_{1} n_{2}}
    \mapsto
    \sum_{n_{1}' n_{2}'}
    \rmat{n_{1}}{n_{2}}{n_{1}'}{n_{2}'}
    \vbh{n_{2}' n_{1}'}
  \end{equation}
  the \defemph{braiding} associated to a positive log-colored crossing.
  For a negative crossing the braiding is \(\overline{R} \tau\).
\end{definition}

The relation \eqref{eq:qlf recurrence} (and some algebra) immediately show that the positive \(R\)-matrix satisfies the relations of \cref{eq:intertwiner-rel}.
One can check directly that the negative \(R\)-matrix is its inverse for appropriate parameter values \cite{McPhailSnyderVolume}.
Our assumption that the crossing is not pinched ensures we avoid the poles and zeros of \(\qlfname\), which occur at \(\zeta^0 \in \ZZ\).
In \cref{sec:pinched-limit} we show that despite these poles the \(R\)-matrix is regular in the pinched limit.

\subsection{Factorization of the \texorpdfstring{\(R\)}{R}-matrix}
\label{sec:factorization}

A remarkable property of the \(R\)-matrices in \cref{def:R-mat} is that they factor into four operators, each defined in terms of a quantum dilogarithm.

\begin{figure}
  \centering
  \subcaptionbox{The initial triangulation.\label{fig:triangulated-braiding-0}}{ \def\svgwidth{2.5in} %% Creator: Inkscape 1.1.2 (0a00cf5339, 2022-02-04), www.inkscape.org
%% PDF/EPS/PS + LaTeX output extension by Johan Engelen, 2010
%% Accompanies image file 'triangulated-braiding-0.pdf' (pdf, eps, ps)
%%
%% To include the image in your LaTeX document, write
%%   \input{<filename>.pdf_tex}
%%  instead of
%%   \includegraphics{<filename>.pdf}
%% To scale the image, write
%%   \def\svgwidth{<desired width>}
%%   \input{<filename>.pdf_tex}
%%  instead of
%%   \includegraphics[width=<desired width>]{<filename>.pdf}
%%
%% Images with a different path to the parent latex file can
%% be accessed with the `import' package (which may need to be
%% installed) using
%%   \usepackage{import}
%% in the preamble, and then including the image with
%%   \import{<path to file>}{<filename>.pdf_tex}
%% Alternatively, one can specify
%%   \graphicspath{{<path to file>/}}
%% 
%% For more information, please see info/svg-inkscape on CTAN:
%%   http://tug.ctan.org/tex-archive/info/svg-inkscape
%%
\begingroup%
  \makeatletter%
  \providecommand\color[2][]{%
    \errmessage{(Inkscape) Color is used for the text in Inkscape, but the package 'color.sty' is not loaded}%
    \renewcommand\color[2][]{}%
  }%
  \providecommand\transparent[1]{%
    \errmessage{(Inkscape) Transparency is used (non-zero) for the text in Inkscape, but the package 'transparent.sty' is not loaded}%
    \renewcommand\transparent[1]{}%
  }%
  \providecommand\rotatebox[2]{#2}%
  \newcommand*\fsize{\dimexpr\f@size pt\relax}%
  \newcommand*\lineheight[1]{\fontsize{\fsize}{#1\fsize}\selectfont}%
  \ifx\svgwidth\undefined%
    \setlength{\unitlength}{215.81824493bp}%
    \ifx\svgscale\undefined%
      \relax%
    \else%
      \setlength{\unitlength}{\unitlength * \real{\svgscale}}%
    \fi%
  \else%
    \setlength{\unitlength}{\svgwidth}%
  \fi%
  \global\let\svgwidth\undefined%
  \global\let\svgscale\undefined%
  \makeatother%
  \begin{picture}(1,1.03093509)%
    \lineheight{1}%
    \setlength\tabcolsep{0pt}%
    \put(0.366521,0.92532356){\color[rgb]{0,0,0}\makebox(0,0)[lt]{\lineheight{1.25}\smash{\begin{tabular}[t]{l}$P_+$\end{tabular}}}}%
    \put(0.38424276,0.03278304){\color[rgb]{0,0,0}\makebox(0,0)[lt]{\lineheight{1.25}\smash{\begin{tabular}[t]{l}$P_-$\end{tabular}}}}%
    \put(0.1486229,0.51022397){\color[rgb]{0,0,0}\makebox(0,0)[lt]{\lineheight{1.25}\smash{\begin{tabular}[t]{l}$P_2$\end{tabular}}}}%
    \put(0.59268972,0.51326773){\color[rgb]{0,0,0}\makebox(0,0)[lt]{\lineheight{1.25}\smash{\begin{tabular}[t]{l}$P_1$\end{tabular}}}}%
    \put(0,0){\includegraphics[width=\unitlength,page=1]{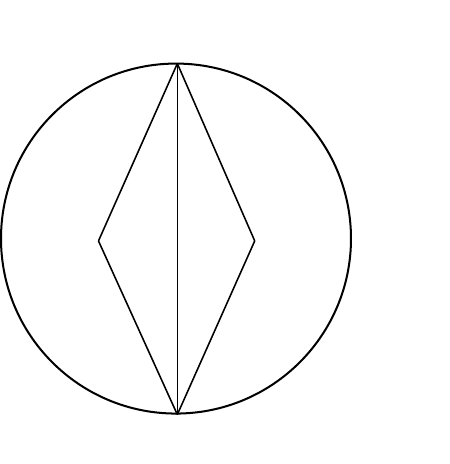}}%
  \end{picture}%
\endgroup%
 }%
  \hfill
  \subcaptionbox{Building a tetrahedron on top of a quadrilateral.\label{fig:triangulated-braiding-1}}{ \def\svgwidth{2.5in} %% Creator: Inkscape 1.1.2 (0a00cf5339, 2022-02-04), www.inkscape.org
%% PDF/EPS/PS + LaTeX output extension by Johan Engelen, 2010
%% Accompanies image file 'triangulated-braiding-1.pdf' (pdf, eps, ps)
%%
%% To include the image in your LaTeX document, write
%%   \input{<filename>.pdf_tex}
%%  instead of
%%   \includegraphics{<filename>.pdf}
%% To scale the image, write
%%   \def\svgwidth{<desired width>}
%%   \input{<filename>.pdf_tex}
%%  instead of
%%   \includegraphics[width=<desired width>]{<filename>.pdf}
%%
%% Images with a different path to the parent latex file can
%% be accessed with the `import' package (which may need to be
%% installed) using
%%   \usepackage{import}
%% in the preamble, and then including the image with
%%   \import{<path to file>}{<filename>.pdf_tex}
%% Alternatively, one can specify
%%   \graphicspath{{<path to file>/}}
%% 
%% For more information, please see info/svg-inkscape on CTAN:
%%   http://tug.ctan.org/tex-archive/info/svg-inkscape
%%
\begingroup%
  \makeatletter%
  \providecommand\color[2][]{%
    \errmessage{(Inkscape) Color is used for the text in Inkscape, but the package 'color.sty' is not loaded}%
    \renewcommand\color[2][]{}%
  }%
  \providecommand\transparent[1]{%
    \errmessage{(Inkscape) Transparency is used (non-zero) for the text in Inkscape, but the package 'transparent.sty' is not loaded}%
    \renewcommand\transparent[1]{}%
  }%
  \providecommand\rotatebox[2]{#2}%
  \newcommand*\fsize{\dimexpr\f@size pt\relax}%
  \newcommand*\lineheight[1]{\fontsize{\fsize}{#1\fsize}\selectfont}%
  \ifx\svgwidth\undefined%
    \setlength{\unitlength}{215.81824493bp}%
    \ifx\svgscale\undefined%
      \relax%
    \else%
      \setlength{\unitlength}{\unitlength * \real{\svgscale}}%
    \fi%
  \else%
    \setlength{\unitlength}{\svgwidth}%
  \fi%
  \global\let\svgwidth\undefined%
  \global\let\svgscale\undefined%
  \makeatother%
  \begin{picture}(1,1.03093509)%
    \lineheight{1}%
    \setlength\tabcolsep{0pt}%
    \put(0.366521,0.92532356){\color[rgb]{0,0,0}\makebox(0,0)[lt]{\lineheight{1.25}\smash{\begin{tabular}[t]{l}$P_+$\end{tabular}}}}%
    \put(0.38424276,0.03278304){\color[rgb]{0,0,0}\makebox(0,0)[lt]{\lineheight{1.25}\smash{\begin{tabular}[t]{l}$P_-$\end{tabular}}}}%
    \put(0.1486229,0.51022397){\color[rgb]{0,0,0}\makebox(0,0)[lt]{\lineheight{1.25}\smash{\begin{tabular}[t]{l}$P_2$\end{tabular}}}}%
    \put(0.59268972,0.51326773){\color[rgb]{0,0,0}\makebox(0,0)[lt]{\lineheight{1.25}\smash{\begin{tabular}[t]{l}$P_1$\end{tabular}}}}%
    \put(0,0){\includegraphics[width=\unitlength,page=1]{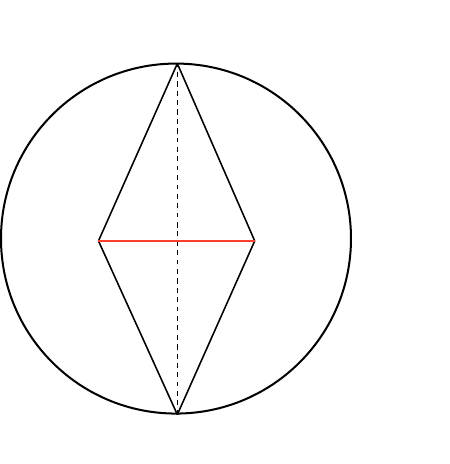}}%
  \end{picture}%
\endgroup%
 }%
  \hfill
  \subcaptionbox{Adding two more tetrahedra.\label{fig:triangulated-braiding-2}}{ \def\svgwidth{2.5in} %% Creator: Inkscape 1.1.2 (0a00cf5339, 2022-02-04), www.inkscape.org
%% PDF/EPS/PS + LaTeX output extension by Johan Engelen, 2010
%% Accompanies image file 'triangulated-braiding-2.pdf' (pdf, eps, ps)
%%
%% To include the image in your LaTeX document, write
%%   \input{<filename>.pdf_tex}
%%  instead of
%%   \includegraphics{<filename>.pdf}
%% To scale the image, write
%%   \def\svgwidth{<desired width>}
%%   \input{<filename>.pdf_tex}
%%  instead of
%%   \includegraphics[width=<desired width>]{<filename>.pdf}
%%
%% Images with a different path to the parent latex file can
%% be accessed with the `import' package (which may need to be
%% installed) using
%%   \usepackage{import}
%% in the preamble, and then including the image with
%%   \import{<path to file>}{<filename>.pdf_tex}
%% Alternatively, one can specify
%%   \graphicspath{{<path to file>/}}
%% 
%% For more information, please see info/svg-inkscape on CTAN:
%%   http://tug.ctan.org/tex-archive/info/svg-inkscape
%%
\begingroup%
  \makeatletter%
  \providecommand\color[2][]{%
    \errmessage{(Inkscape) Color is used for the text in Inkscape, but the package 'color.sty' is not loaded}%
    \renewcommand\color[2][]{}%
  }%
  \providecommand\transparent[1]{%
    \errmessage{(Inkscape) Transparency is used (non-zero) for the text in Inkscape, but the package 'transparent.sty' is not loaded}%
    \renewcommand\transparent[1]{}%
  }%
  \providecommand\rotatebox[2]{#2}%
  \newcommand*\fsize{\dimexpr\f@size pt\relax}%
  \newcommand*\lineheight[1]{\fontsize{\fsize}{#1\fsize}\selectfont}%
  \ifx\svgwidth\undefined%
    \setlength{\unitlength}{215.81824493bp}%
    \ifx\svgscale\undefined%
      \relax%
    \else%
      \setlength{\unitlength}{\unitlength * \real{\svgscale}}%
    \fi%
  \else%
    \setlength{\unitlength}{\svgwidth}%
  \fi%
  \global\let\svgwidth\undefined%
  \global\let\svgscale\undefined%
  \makeatother%
  \begin{picture}(1,1.03093509)%
    \lineheight{1}%
    \setlength\tabcolsep{0pt}%
    \put(0.366521,0.92532356){\color[rgb]{0,0,0}\makebox(0,0)[lt]{\lineheight{1.25}\smash{\begin{tabular}[t]{l}$P_+$\end{tabular}}}}%
    \put(0.38424276,0.03278304){\color[rgb]{0,0,0}\makebox(0,0)[lt]{\lineheight{1.25}\smash{\begin{tabular}[t]{l}$P_-$\end{tabular}}}}%
    \put(0.1486229,0.51022397){\color[rgb]{0,0,0}\makebox(0,0)[lt]{\lineheight{1.25}\smash{\begin{tabular}[t]{l}$P_2$\end{tabular}}}}%
    \put(0.59268972,0.51326773){\color[rgb]{0,0,0}\makebox(0,0)[lt]{\lineheight{1.25}\smash{\begin{tabular}[t]{l}$P_1$\end{tabular}}}}%
    \put(0,0){\includegraphics[width=\unitlength,page=1]{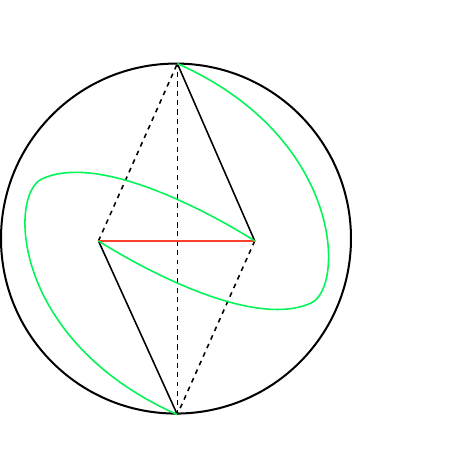}}%
  \end{picture}%
\endgroup%
 }%
  \hfill
  \subcaptionbox{The final result.\label{fig:triangulated-braiding-total}}{ \def\svgwidth{2.5in} %% Creator: Inkscape 1.1.2 (0a00cf5339, 2022-02-04), www.inkscape.org
%% PDF/EPS/PS + LaTeX output extension by Johan Engelen, 2010
%% Accompanies image file 'triangulated-braiding-total.pdf' (pdf, eps, ps)
%%
%% To include the image in your LaTeX document, write
%%   \input{<filename>.pdf_tex}
%%  instead of
%%   \includegraphics{<filename>.pdf}
%% To scale the image, write
%%   \def\svgwidth{<desired width>}
%%   \input{<filename>.pdf_tex}
%%  instead of
%%   \includegraphics[width=<desired width>]{<filename>.pdf}
%%
%% Images with a different path to the parent latex file can
%% be accessed with the `import' package (which may need to be
%% installed) using
%%   \usepackage{import}
%% in the preamble, and then including the image with
%%   \import{<path to file>}{<filename>.pdf_tex}
%% Alternatively, one can specify
%%   \graphicspath{{<path to file>/}}
%% 
%% For more information, please see info/svg-inkscape on CTAN:
%%   http://tug.ctan.org/tex-archive/info/svg-inkscape
%%
\begingroup%
  \makeatletter%
  \providecommand\color[2][]{%
    \errmessage{(Inkscape) Color is used for the text in Inkscape, but the package 'color.sty' is not loaded}%
    \renewcommand\color[2][]{}%
  }%
  \providecommand\transparent[1]{%
    \errmessage{(Inkscape) Transparency is used (non-zero) for the text in Inkscape, but the package 'transparent.sty' is not loaded}%
    \renewcommand\transparent[1]{}%
  }%
  \providecommand\rotatebox[2]{#2}%
  \newcommand*\fsize{\dimexpr\f@size pt\relax}%
  \newcommand*\lineheight[1]{\fontsize{\fsize}{#1\fsize}\selectfont}%
  \ifx\svgwidth\undefined%
    \setlength{\unitlength}{215.81824493bp}%
    \ifx\svgscale\undefined%
      \relax%
    \else%
      \setlength{\unitlength}{\unitlength * \real{\svgscale}}%
    \fi%
  \else%
    \setlength{\unitlength}{\svgwidth}%
  \fi%
  \global\let\svgwidth\undefined%
  \global\let\svgscale\undefined%
  \makeatother%
  \begin{picture}(1,1.03093509)%
    \lineheight{1}%
    \setlength\tabcolsep{0pt}%
    \put(0.366521,0.92532356){\color[rgb]{0,0,0}\makebox(0,0)[lt]{\lineheight{1.25}\smash{\begin{tabular}[t]{l}$P_+$\end{tabular}}}}%
    \put(0.38424276,0.03278304){\color[rgb]{0,0,0}\makebox(0,0)[lt]{\lineheight{1.25}\smash{\begin{tabular}[t]{l}$P_-$\end{tabular}}}}%
    \put(0.1486229,0.51022397){\color[rgb]{0,0,0}\makebox(0,0)[lt]{\lineheight{1.25}\smash{\begin{tabular}[t]{l}$P_2$\end{tabular}}}}%
    \put(0.59268972,0.51326773){\color[rgb]{0,0,0}\makebox(0,0)[lt]{\lineheight{1.25}\smash{\begin{tabular}[t]{l}$P_1$\end{tabular}}}}%
    \put(0,0){\includegraphics[width=\unitlength,page=1]{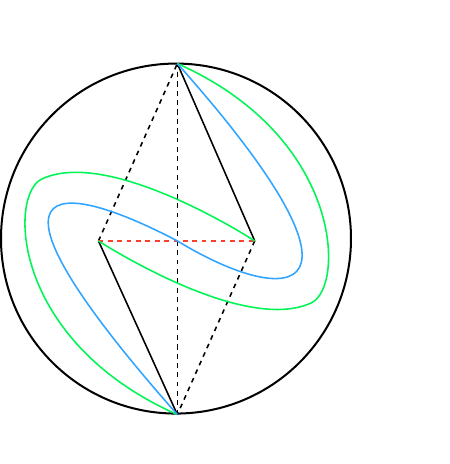}}%
  \end{picture}%
\endgroup%
 }%
  \caption{Building an ideal octahedron.}
  \label{fig:build-ideal-octahedron}
\end{figure}

\begin{theorem}
  \label{thm:R-factorization}
  At a positive, non-pinched, log-colored crossing the brading matrix factors as
  \begin{equation}
    \label{eq:R-factorization-positive}
    \frac{
      1
    }{
      N
    }
    \mathcal{Z}_{\lE} (\mathcal{Z}_{\lN} \otimes \mathcal{Z}_{\lS}) \mathcal{Z}_{\lW}
  \end{equation}
  where
  \begin{align*}
    \mathcal{Z}_{\lE}(\widehat{v}_{n_1 n_2})
    &=
    \frac{
      1
    }{
      \Lambda({\zeta^0_{\lE} , \zeta^1_{\lE}}|{n_1 - n_2})
    }
    \widehat{v}_{n_1 n_2}
    \\
    \mathcal{Z}_{\lW}(\widehat{v}_{n_1 n_2})
    &=
      \omega^{-(N-1)(\zeta_{\lW}^0 + \zeta_{\lW}^1)}
    \frac{
      \omega^{n_2 - n_1} 
    }{
      \Lambda({\zeta^0_{\lW}, \zeta^1_{\lW}}|{n_2 - n_1 -1})
    }
    \widehat{v}_{n_1 n_2}
    \\
    \mathcal{Z}_{\lN}(\widehat{v}_{n})
    &=
    \sum_{n'=0}^{N -1}
    \Lambda({\zeta^0_{\lN}, \zeta^1_{\lN}}|{n'-n})\widehat{v}_{n'}
    \\
    \mathcal{Z}_{\lS}(\widehat{v}_{n})
    &=
    \sum_{n'=0}^{N -1}
    \Lambda({\zeta^0_{\lS}, \zeta^1_{\lS}}|{n-n'})\widehat{v}_{n'}
  \end{align*}

  At a negative crossing we instead have a factorization
  \begin{equation}
    \label{eq:R-factorization-negative}
    \frac{
      1
    }{
      N
    }
    \overline{\mathcal{Z}}_{\lE} (\overline{\mathcal{Z}}_{\lS} \otimes \overline{\mathcal{Z}}_{\lN}) \overline{\mathcal{Z}}_{\lW}
  \end{equation}
  in terms of operators
  \begin{align*}
    \overline{\mathcal{Z}}_{\lE}(\widehat{v}_{n_1 n_2})
    &=
    \omega^{-(N -1)(\zeta_{\lE}^0 + \zeta_{\lE}^1)}
    \qlf{\zeta_{\lE}^0, \zeta_{\lE}^1}{n_1 - n_2 - 1}
    \widehat{v}_{n_1 n_2}
    \\
    \overline{\mathcal{Z}}_{\lW}(\widehat{v}_{n_1 n_2})
    &=
    \omega^{n_2 - n_1}
    \qlf{\zeta_{\lW}^0, \zeta_{\lW}^1}{n_2 - n_1} \widehat{v}_{n_1 n_2}
    \\
    \overline{\mathcal{Z}}_{\lN}(\widehat{v}_n)
    &=
    \omega^{(N -1)(\zeta_{\lN}^0 + \zeta_{\lN}^1)}
    \sum_{n'}
    \frac{
      1
    }{
      \qlf{\zeta_{\lN}^0, \zeta_{\lN}^1}{n - n' - 1}
    }
    \widehat{v}_{n'}
    \\
    \overline{\mathcal{Z}}_{\lS}(\widehat{v}_n)
    &=
    \omega^{(N -1)(\zeta_{\lS}^0 + \zeta_{\lS}^1)}
    \sum_{n'}
    \frac{
      1
    }{
      \qlf{\zeta_{\lS}^0, \zeta_{\lS}^1}{n' - n - 1}
    }
    \widehat{v}_{n'}
  \end{align*}
\end{theorem}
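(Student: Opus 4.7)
The plan is to verify the factorization by directly comparing matrix elements.  Every operator $\mathcal{Z}_{\lW}, \mathcal{Z}_{\lN}, \mathcal{Z}_{\lS}, \mathcal{Z}_{\lE}$ is given by an explicit closed-form expression in the Fourier-dual basis $\vbh n$, so the calculation is essentially bookkeeping and no identity for quantum dilogarithms beyond what already appears in the definition of $\qlfname$ is needed.

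First I would apply the composition $\mathcal{Z}_{\lE} (\mathcal{Z}_{\lN} \otimes \mathcal{Z}_{\lS}) \mathcal{Z}_{\lW}$ to the basis vector $\vbh{n_1 n_2}$ one operator at a time.  The operator $\mathcal{Z}_{\lW}$ is diagonal and produces the scalar $\omega^{-(\nr-1)(\zeta_{\lW}^0 + \zeta_{\lW}^1)}\omega^{n_2-n_1}/\qlf{\zeta_{\lW}^0, \zeta_{\lW}^1}{n_2 - n_1 - 1}$.  Then $\mathcal{Z}_{\lN} \otimes \mathcal{Z}_{\lS}$ yields a double sum over intermediate indices $(m_1, m_2)$ with coefficients $\qlf{\zeta_{\lN}^0, \zeta_{\lN}^1}{m_1 - n_1}\,\qlf{\zeta_{\lS}^0, \zeta_{\lS}^1}{n_2 - m_2}$; finally, $\mathcal{Z}_{\lE}$ is diagonal in the output basis and divides by $\qlf{\zeta_{\lE}^0, \zeta_{\lE}^1}{m_1 - m_2}$.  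Comparing with \eqref{eq:R-mat-positive}, the resulting expression matches $\rmat{n_1}{n_2}{n_1'}{n_2'}$ after identifying the intermediate pair $(m_1, m_2)$ with $(n_2', n_1')$ — precisely the swap encoded by $\tau$ in the braiding $\tau R$ of \eqref{eq:braiding}.

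The negative case is entirely analogous: I would compute matrix elements of $\tfrac{1}{\nr} \overline{\mathcal{Z}}_{\lE} (\overline{\mathcal{Z}}_{\lS} \otimes \overline{\mathcal{Z}}_{\lN}) \overline{\mathcal{Z}}_{\lW}$ applied to $\vbh{n_1 n_2}$ and match them to \eqref{eq:R-mat-negative}.  The interchanged roles of $\overline{\mathcal{Z}}_{\lN}$ and $\overline{\mathcal{Z}}_{\lS}$ in the middle tensor product reflect both the reversed orientation of the crossing and the fact that $\tau$ now sits on the right of $\overline R$ in the braiding $\overline R \tau$.

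The main obstacle, and the step to be most careful with, is the bookkeeping of scalar prefactors and of the sign and shift conventions in \eqref{eq:flattening-N}–\eqref{eq:flattening-E}.  Since each $\zeta_j^{\epsilon}$ appears in exactly one of the four $\qlfname$ factors, the verification ultimately reduces to four independent term-by-term checks; the nontrivial prefactor $\omega^{(\nr-1)(\zeta_{\lE}^0 + \zeta_{\lE}^1 - \zeta_{\lS}^0 - \zeta_{\lS}^1 - \zeta_{\lN}^0 - \zeta_{\lN}^1)}$ in the negative case must be shown to distribute correctly across the four $\overline{\mathcal{Z}}$ factors, and is the one place where a sign slip is most likely.
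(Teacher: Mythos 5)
Your proposal is correct and is essentially the paper's own argument: the paper also proves the factorization by noting it follows immediately from the explicit coefficient formulas \eqref{eq:R-mat-positive} and \eqref{eq:R-mat-negative}, with the flip \(\tau\) in the braiding \eqref{eq:braiding} accounting for the identification of the intermediate indices \((m_1,m_2)\) with \((n_2',n_1')\), exactly as in your bookkeeping.
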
 

  In both cases the overall matrix does not depend on the choice of \(\kappa\) for the crossing but the operator factors \(\mathcal{Z}_i\) do.

\begin{proof}
  \Cref{eq:R-factorization-positive} follows immediately from \cref{eq:R-mat-positive}, and similarly for the negative case.
  (Recall that the braiding is given by the \(R\)-matrix followed by a flip map.)
\end{proof}

\begin{marginfigure}
  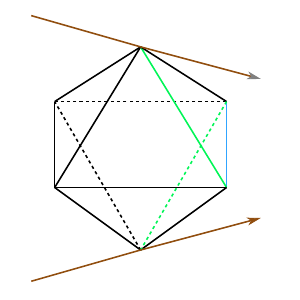
  \caption{
    A side view of the ideal octahedron in \cref{fig:triangulated-braiding-total}.
    The grey dashed edges indicate the identification of the ideal vertices \(P_{+}, P_{+}'\) and \(P_{-}, P_{-}'\).
  }
  \label{fig:octahedron-side}
\end{marginfigure}

This factorization has a natural geometric interpretation.
Thinking of a crossing as coming out of the page we can associate it to the triangulated punctured disc shown in \cref{fig:triangulated-braiding-0}.
The braiding can then be implemented as a series of flips in this triangulation, and each operator factor is associated to a flip.
From a \(3\)-dimensional point of view each flip can be thought of as building an ideal tetrahedron, so the braiding corresponds to a (twisted) ideal octahedron, as in \cref{fig:build-ideal-octahedron}.
We show a side view of the octahedron in \cref{fig:octahedron-side}.
This leads directly to the \defemph{octahedral decomposition} \cite{Kim2018}, an ideal triangulation of the tangle complement associated to a choice of diagram.

The parameters of the \(\chi\)-coloring are geometrically natural: they parametrize hyperbolic structures on the octahedral decomposition \cite{McPhailSnyder2022,McPhailSnyder2024} and can be used to can be used to directly compute the Chern-Simons invariant via a sum of dilogarithms \cite{Cho2014,McPhailSnyderVolume}.
Our \(R\)-matrix is analogous: instead of a scalar-valued dilogarithm for each factor we have a matrix-valued quantum dilogarithm.
This analogy is discussed further in \cite{McPhailSnyderVolume}.

The crossing is pinched exactly when the ideal tetrahedra are geometrically degenerate.
In this case the log-parameters \(\zeta_j^0\) lie in \(\ZZ\), so the quantum dilogarithms have zeros and poles.
In \cref{thm:R-mat-pinched} we show that the overall \(R\)-matrix has a well-defined limit, namely Kashaev's \(R\)-matrix \cite{Kashaev1995}.
However the factorization only holds in a weaker sense: we still have a product of four operators \(\mathcal{Z}_j\), each of which depends on only two of the tensor indices, but there is now a cutoff term \(\theta_{n_1 n_2}^{n_1' n_2'} \in \set{0, 1}\) involving every index at the crossing.

\subsection{The holonomy Yang-Baxter equation}
\label{sec:braid groupoids}

Next we show our \(R\)-matrices satisfy braid relations.
We discuss how to use these braid relations to define tangle and link invariants in \cite{McPhailSnyderVolume}, extending \cite{Kashaev2005}.
The holonomy \(R\)-matrix satisfies a \emph{family} of braid relations parametrized by all admissible choices of colors, which have a natural interpretation in terms of flat \(\sla\) connections on the braid complement, equivalently representations of the fundamental group of the complement into \(\slg\).
Our matrices depend on an additional local choice of log-coloring whose geometric meaning is less clear.
However, it turns out that they depend only on certain quantities derived from the log-coloring which have an interpretation in terms of the holonomy of the longitude.
These \defemph{log-decorations} are discussed in detail in \cite{McPhailSnyderVolume}; here we say just enough to prove the braid relation.

\begin{definition}
  \label{def:RIII-admissible}
  Choosing characters \(\hat \chi_{1}, \hat \chi_{2}, \hat \chi_{3} \in \Wcharhat\) for the top segments determines a \(\chi\)-coloring of the \(\RIII\) diagrams in \cref{fig:RIII-before-and-after-chis}.
  We say the \(\chi\)-coloring is \defemph{\(\RIII\)-admissible} if both colorings are admissible.%
  \note{%
    The colorings of the bottom strands match because \(\R\) and thus \(B\) satisfy braid relations.%
  }
  We say that log-colorings for each diagram are \defemph{\(\RIII\)-compatible} if the log-parameters for the boundary regions and segments agree and \(\beta + \beta'' = \beta' + \tilde{\beta}'\) (in the notation of \cref{fig:RIII-before-and-after-middle}).
\end{definition}

\begin{figure}
  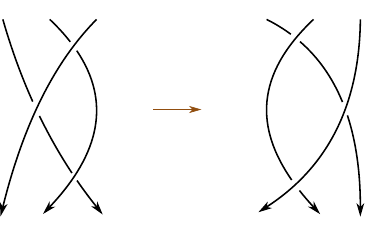
  \caption{
    \(\chi\)-colorings of the braid diagrams occurring in an \(\RIII\) move.
  }
  \label{fig:RIII-before-and-after-chis}
\end{figure}

\begin{figure}
  %% Creator: Inkscape 1.1.2 (0a00cf5339, 2022-02-04), www.inkscape.org
%% PDF/EPS/PS + LaTeX output extension by Johan Engelen, 2010
%% Accompanies image file 'RIII-before-and-after-middle.pdf' (pdf, eps, ps)
%%
%% To include the image in your LaTeX document, write
%%   \input{<filename>.pdf_tex}
%%  instead of
%%   \includegraphics{<filename>.pdf}
%% To scale the image, write
%%   \def\svgwidth{<desired width>}
%%   \input{<filename>.pdf_tex}
%%  instead of
%%   \includegraphics[width=<desired width>]{<filename>.pdf}
%%
%% Images with a different path to the parent latex file can
%% be accessed with the `import' package (which may need to be
%% installed) using
%%   \usepackage{import}
%% in the preamble, and then including the image with
%%   \import{<path to file>}{<filename>.pdf_tex}
%% Alternatively, one can specify
%%   \graphicspath{{<path to file>/}}
%% 
%% For more information, please see info/svg-inkscape on CTAN:
%%   http://tug.ctan.org/tex-archive/info/svg-inkscape
%%
\begingroup%
  \makeatletter%
  \providecommand\color[2][]{%
    \errmessage{(Inkscape) Color is used for the text in Inkscape, but the package 'color.sty' is not loaded}%
    \renewcommand\color[2][]{}%
  }%
  \providecommand\transparent[1]{%
    \errmessage{(Inkscape) Transparency is used (non-zero) for the text in Inkscape, but the package 'transparent.sty' is not loaded}%
    \renewcommand\transparent[1]{}%
  }%
  \providecommand\rotatebox[2]{#2}%
  \newcommand*\fsize{\dimexpr\f@size pt\relax}%
  \newcommand*\lineheight[1]{\fontsize{\fsize}{#1\fsize}\selectfont}%
  \ifx\svgwidth\undefined%
    \setlength{\unitlength}{177.67268372bp}%
    \ifx\svgscale\undefined%
      \relax%
    \else%
      \setlength{\unitlength}{\unitlength * \real{\svgscale}}%
    \fi%
  \else%
    \setlength{\unitlength}{\svgwidth}%
  \fi%
  \global\let\svgwidth\undefined%
  \global\let\svgscale\undefined%
  \makeatother%
  \begin{picture}(1,0.65227594)%
    \lineheight{1}%
    \setlength\tabcolsep{0pt}%
    \put(0,0){\includegraphics[width=\unitlength,page=1]{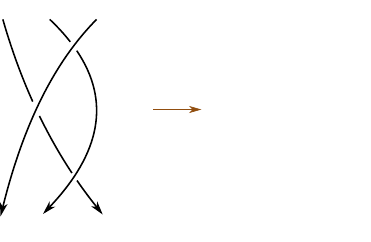}}%
    \put(0.09998844,0.62074315){\makebox(0,0)[lt]{\lineheight{1.25}\smash{\begin{tabular}[t]{l}$\beta$\end{tabular}}}}%
    \put(0.28271139,0.33166973){\makebox(0,0)[lt]{\lineheight{1.25}\smash{\begin{tabular}[t]{l}$\beta'$\end{tabular}}}}%
    \put(0.074551,0.01174866){\makebox(0,0)[lt]{\lineheight{1.25}\smash{\begin{tabular}[t]{l}$\beta''$\end{tabular}}}}%
    \put(0,0){\includegraphics[width=\unitlength,page=2]{RIII-before-and-after-middle.pdf}}%
    \put(0.83074933,0.62083637){\makebox(0,0)[lt]{\lineheight{1.25}\smash{\begin{tabular}[t]{l}$\beta$\end{tabular}}}}%
    \put(0.74831887,0.33105816){\makebox(0,0)[lt]{\lineheight{1.25}\smash{\begin{tabular}[t]{l}$\tilde \beta'$\end{tabular}}}}%
    \put(0.81229327,0.00441912){\makebox(0,0)[lt]{\lineheight{1.25}\smash{\begin{tabular}[t]{l}$\beta''$\end{tabular}}}}%
  \end{picture}%
\endgroup%

  \caption{
    A log-colored \(\RIII\) move is allowed when the log-parameters of the boundary regions and segments are the same on both sides and additionally \(\beta + \beta'' = \beta' + \tilde{\beta}'\).
    This ensures that the log-longitudes of each component are preserved.
  }
  \label{fig:RIII-before-and-after-middle}
\end{figure}

This condition has a geometric meaning.
Recall that the parameter \(m\) of a color \(\chi = (a, b, m)\) represents a distinguished eigenvalue of the associated meridian holonomy \(\psi(\chi) \in \slg\).
For a \(\chi\)-colored link diagram this choice gives an associated longitude eigenvalue \(\ell\) \cite{McPhailSnyder2022} for each component; the choice of \((m, \ell)\) versus \((m^{-1}, \ell^{-1})\) is called a \defemph{decoration}.
A log-coloring determines a logarithm \(\mu\) of \(m = e^{2\pi i \mu}\) and also a logarithm \(\lambda\) of \(\ell  = e^{2 \pi i \lambda}\) by taking a certain signed sum of segment log-parameters \(\beta_{i}\).
These definitions can be extended to tangles like those in \cref{fig:RIII-before-and-after-middle}, and we call the choice of \((\mu, \lambda)\) for each component of the tangle a \defemph{log-decoration}.
General definitions and more details are given in \cite{McPhailSnyderVolume}.
Here it suffices to consider the condition in \cref{def:RIII-admissible}, which asserts that the log-decorations on both sides of the \(\RIII\) move are the same.

\begin{theorem}
  \label{thm:RIII}
  Fix any \(\RIII\)-admissible \(\chi\)-colorings and compatible log-colorings of the diagrams \(D, D'\) in \cref{fig:RIII-before-and-after-chis}.
  By assigning the holonomy braidings of \cref{def:R-mat} to the crossings of the diagrams in \cref{fig:RIII-before-and-after-chis} we obtain \(\U_{\xi}\)-module intertwiners
  \[
    \J_{\xi}(D)
    ,
    \J_{\xi}(D')
    :
    \rep{\hat{\chi}_{1}}
    \otimes
    \rep{\hat{\chi}_{2}}
    \otimes
    \rep{\hat{\chi}_{3}}
    \to
    \rep{\hat{\chi}_{3''}}
    \otimes
    \rep{\hat{\chi}_{2''}}
    \otimes
    \rep{\hat{\chi}_{1''}}
  \]
  Then the holonomy \(R\)-matrices satisfy braid relations in the sense that \(\J_{\xi}(D) = \J_{\xi}(D')\).
\end{theorem}

\begin{figure}
  \centering
  %% Creator: Inkscape 1.1.2 (0a00cf5339, 2022-02-04), www.inkscape.org
%% PDF/EPS/PS + LaTeX output extension by Johan Engelen, 2010
%% Accompanies image file 'RIII-colored-together.pdf' (pdf, eps, ps)
%%
%% To include the image in your LaTeX document, write
%%   \input{<filename>.pdf_tex}
%%  instead of
%%   \includegraphics{<filename>.pdf}
%% To scale the image, write
%%   \def\svgwidth{<desired width>}
%%   \input{<filename>.pdf_tex}
%%  instead of
%%   \includegraphics[width=<desired width>]{<filename>.pdf}
%%
%% Images with a different path to the parent latex file can
%% be accessed with the `import' package (which may need to be
%% installed) using
%%   \usepackage{import}
%% in the preamble, and then including the image with
%%   \import{<path to file>}{<filename>.pdf_tex}
%% Alternatively, one can specify
%%   \graphicspath{{<path to file>/}}
%% 
%% For more information, please see info/svg-inkscape on CTAN:
%%   http://tug.ctan.org/tex-archive/info/svg-inkscape
%%
\begingroup%
  \makeatletter%
  \providecommand\color[2][]{%
    \errmessage{(Inkscape) Color is used for the text in Inkscape, but the package 'color.sty' is not loaded}%
    \renewcommand\color[2][]{}%
  }%
  \providecommand\transparent[1]{%
    \errmessage{(Inkscape) Transparency is used (non-zero) for the text in Inkscape, but the package 'transparent.sty' is not loaded}%
    \renewcommand\transparent[1]{}%
  }%
  \providecommand\rotatebox[2]{#2}%
  \newcommand*\fsize{\dimexpr\f@size pt\relax}%
  \newcommand*\lineheight[1]{\fontsize{\fsize}{#1\fsize}\selectfont}%
  \ifx\svgwidth\undefined%
    \setlength{\unitlength}{109.25290205bp}%
    \ifx\svgscale\undefined%
      \relax%
    \else%
      \setlength{\unitlength}{\unitlength * \real{\svgscale}}%
    \fi%
  \else%
    \setlength{\unitlength}{\svgwidth}%
  \fi%
  \global\let\svgwidth\undefined%
  \global\let\svgscale\undefined%
  \makeatother%
  \begin{picture}(1,2.93188782)%
    \lineheight{1}%
    \setlength\tabcolsep{0pt}%
    \put(0,0){\includegraphics[width=\unitlength,page=1]{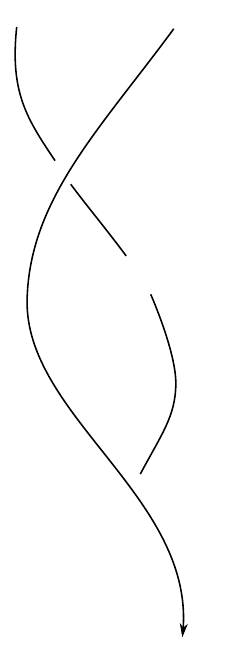}}%
    \put(0.69802025,2.87625433){\makebox(0,0)[lt]{\lineheight{1.25}\smash{\begin{tabular}[t]{l}$\chi_1$\end{tabular}}}}%
    \put(0.32282748,2.87624433){\makebox(0,0)[lt]{\lineheight{1.25}\smash{\begin{tabular}[t]{l}$\chi_2$\end{tabular}}}}%
    \put(-0.00606632,2.87320708){\makebox(0,0)[lt]{\lineheight{1.25}\smash{\begin{tabular}[t]{l}$\chi_3$\end{tabular}}}}%
    \put(0,0){\includegraphics[width=\unitlength,page=2]{RIII-colored-together.pdf}}%
    \put(0.69800818,0.02031652){\makebox(0,0)[lt]{\lineheight{1.25}\smash{\begin{tabular}[t]{l}$\chi_1$\end{tabular}}}}%
    \put(0.32281541,0.02030623){\makebox(0,0)[lt]{\lineheight{1.25}\smash{\begin{tabular}[t]{l}$\chi_2$\end{tabular}}}}%
    \put(-0.0060782,0.01726928){\makebox(0,0)[lt]{\lineheight{1.25}\smash{\begin{tabular}[t]{l}$\chi_3$\end{tabular}}}}%
  \end{picture}%
\endgroup%

  \caption{
    Invariance under \(\RIII\) moves follows from showing that \(\J_{\xi}\) is the identity map on this diagram for every choice of coloring and any log-coloring with trivial log-decoration and matching values on the boundary.
    A coloring of the whole diagram is determined by the choice of colors \(\chi_1, \chi_2, \chi_3\) on the top.
  }
  \label{fig:RIII-colored-together}
\end{figure}

The rest of this section is devoted to the proof of this theorem.
It is convenient to instead prove that
\[
  \J_{\xi}(D_{\RIII}) : 
  \rep{\hat{\chi}_{1}}
  \otimes
  \rep{\hat{\chi}_{2}}
  \otimes
  \rep{\hat{\chi}_{3}}
  \to
  \rep{\hat{\chi}_{1}}
  \otimes
  \rep{\hat{\chi}_{2}}
  \otimes
  \rep{\hat{\chi}_{3}}
\]
is the identity map, where \(D_{\RIII}\) is the diagram of \cref{fig:RIII-colored-together} with a \(\chi\)-coloring and log-coloring derived from \(D\) and \(D'\) in the obvious way.
The compatibility condition on the log-parameters of the boundary regions and segments is given in \cref{fig:RIII-log-parameters}.
\cref{thm:RIII} is equivalent to the claim that \(\J_{\xi}(D_{\RIII})\) is the identity map for any choice of log-parameters for the internal regions and segments as long as the log-decoration condition
\begin{equation}
  \label{eq:RIII beta condition}
  \beta_{2} - \beta_{2'} + \beta_{2''} - \beta_{2'''} = 0
\end{equation}
is satisfied.

\begin{figure}
  \centering
  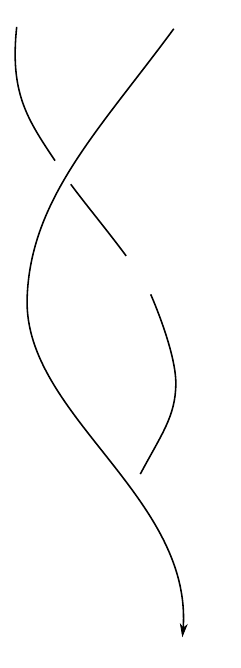
  \caption{
    We require the log-parameters of the boundary regions and segments to be compatible as shown.
    We allow any choice of internal log-parameters as long as \(\beta_{2} - \beta_{2'} + \beta_{2''} - \beta_{2'''} = 0\).
  }
  \label{fig:RIII-log-parameters}
\end{figure}

To prove \cref{thm:RIII} we need to show the space of allowed log-colorings of \(D_{\RIII}\) is connected.
Consider first the space \(A_{3}^{0}\) of \(\RIII\)-admissible triples of colors \((\chi_{1}, \chi_{2}, \chi_{3})\) and set
\[
  A_3 = A_3^0 \times ( \mathbb{C} \setminus \set{0} ).
\]
(the last factor is for keeping track of \(e^{2 \pi i \gamma}\)).
Let \(\widetilde{A}_3\) be the set of points 
\[
  (\gamma, \alpha_1, \alpha_2, \alpha_3, \beta_1, \beta_2, \beta_3, \mu_1, \mu_2, \mu_3)
  \in \CC^{10}
\]
such that if we set
\begin{align*}
  \chi_j =
  \left(
    e^{2\pi i \alpha_{j}}, 
    e^{2\pi i \beta_j},
    e^{2\pi i \mu_j}
  \right)
  \text{ for \(j = 1, 2, 3\) and }
  r = e^{2\pi i \gamma}
\end{align*}
then \((\chi_1, \chi_2, \chi_3, r) \in A_3\).

\begin{lemma}
  \(\widetilde{A}_{3}\) is path-connected.
\end{lemma}

\begin{proof}
  \(A_3\) is obtained by removing finitely many algebraic hypersurfaces from \(\CC^{10}\), so it is path-connected.
  \(\widetilde{A}_{3}\) is a covering space of \(A_{3}\) built by repeatedly taking covers of the same type as the connected infinite cyclic cover
  \[
    \zeta \mapsto \exp(2\pi i \zeta) , \CC \to \CC \setminus \set{0}
  \]
  occurring as the Riemann surface of the complex logarithm, so \(\widetilde{A}_3\) is path-connected as well.
\end{proof}

\begin{lemma}
  \label{thm:DR3-well-defined}
  \(\J_{\xi}(D_{\RIII})\) is a well-defined, matrix-valued function on \(\widetilde{A}_{3}\).
\end{lemma}

\begin{proof}
  Suppose \(\mathfrak{f}\) is a log-coloring of an admissible \(\chi\)-coloring \(\chi\) of \(D_{\RIII}\).
  Let \(\mathfrak{f}'\) be another log-coloring with the same log-parameters on the boundary as \(\mathfrak{f}\).
  Set 
  \[
    \lambda
    =
    \beta_{2} - \beta_{2'} + \beta_{2''} - \beta_{2'''} = 0
  \]
  where the \(\beta_{i}\) are the segment parameters of \(\mathfrak{f}\), and similarly define \(\lambda'\) for \(\mathfrak{f}'\).
  We show that
  \begin{equation}
    \label{eq:jfunc-longitude-dep}
    \jfunc(D_{\RIII}, \chi, \mathfrak{f})
    =
    \omega^{- (\lambda' - \lambda)\mu_{2}}
    \jfunc(D_{\RIII}, \chi, \mathfrak{f})
  \end{equation}
  This is a special case of a more general transformation rule given in \cite{McPhailSnyderVolume}.
  In particular if \(\mathfrak{f}\) and \(\mathfrak{f}'\) are \(\RIII\)-compatible log-colorings corresponding to the same point of \(\widetilde{A}_{3}\) then \(\lambda = \lambda' = 0\), so  \(
    \jfunc(D_{\RIII}, \chi, \mathfrak{f})
    =
    \jfunc(D_{\RIII}, \chi, \mathfrak{f})
  \) as claimed.

  The linear map \(\jfunc\) has an explicit description as a state-sum: each segment of \(D_{\RIII}\) is associated the \(\U_\xi\)-module \(\rep{\alpha, \beta, \mu}\) with log-parameters determined by \(\mathfrak{f}\).
  These log-parameters determine a basis \(\set{\vbh{n} | n \in \mathbb{Z}/\nr \mathbb{Z}}\) of each module and \(R\)-matrices associated to each crossing, and the action of \(\jfunc(D, \chi, \mathfrak{f})\) is given by summing over all internal indices.

  Consider a region of \(D_{\RIII}\) with log-parameter \(\gamma\).
  By \cref{thm:R-matrix transformation}, changing the value of \(\gamma\) to \(\gamma + k\) changes each of the \(R\)-matrices adjacent to the region.
  There are two changes to understand: one depending on the segment index and one global.
  For the first factor, observe that in the state-sum defining \(\jfunc(D, \chi, \mathfrak{f})\) the sum for a segment \(i\) adjacent to the region with parameter \(\gamma\) picks up a factor of the form
  \[
    \omega^{ \pm n_i (k - k)} = 1
  \]
  where \(n_i\) is the index assigned to the segment.
  Here the two factors of \(k\) are from the crossings on each side of the segment.
  As the signs of the \(n_i k_j\) terms in \cref{eq:R-matrix transformation:gamma} depend only on whether the segment is incoming or outgoing these always cancel as claimed.

  The second global factor is more geometric.
  Suppose our region is adjacent to crossings \(1, \dots, p\).
  Each corner corresponds to an ideal tetrahedron of orientation \(\epsilon_i\), and the term \(\Gamma\) in \cref{eq:R-matrix transformation:gamma} says that the state-sum has a global change by
  \[
    \omega^{\frac{k}{2}( \epsilon_{1} \zeta_i^0 + \epsilon_2 \zeta_2^0 + \cdots  + \epsilon_p \zeta_p^0)}.
  \]
  Our claim follows from showing that
  \begin{equation}
    \label{eq:edge-flattening}
    \epsilon_{1} \zeta_i^0 + \epsilon_2 \zeta_2^0 + \cdots  + \epsilon_p \zeta_p^0 = 0.
  \end{equation}
  This is the logarithmic gluing equation of the edge of the octahedral decomposition lying in this region: saying that it holds is part of the claim that the parameters of \cref{eq:flattening-N,eq:flattening-W,eq:flattening-S,eq:flattening-E} define a flattening.
  One can confirm \eqref{eq:edge-flattening} by following the same argument as in the proof of \cite[Theorem 3.2]{McPhailSnyder2022}; see also \cite[Figure 13]{McPhailSnyder2022}.%
  \note{
    Taking arbitrary logarithms \(\beta_i, \gamma_j\) of the \(\chi\)-coordinates automatically gives a flattening; if we instead took arbitrary logarithms of the shape parameters \(z_i^j = e^{2 \pi i \zeta_i^j}\) this would not be the case.
    This property is characteristic of Ptolemey-type coordinates of hyperbolic ideal triangulations \cite{Zickert2007,Zickert2016}.
  }

  Now consider the effect of changing a segment log-parameter \(\beta\) with associated index \(n\).
  Changing from \(\beta\) to \(\beta + l\) picks up index shifts in both adjacent \(R\)-matrices, which cancel, and also picks up a factor involving \(\gamma\)s and \(\mu\)s.
  The  \(\gamma\) terms always cancel segment-by-segment.
  The \(\mu\) terms for the log-parameters of components \(1\) and \(3\) always cancel as well, while for component \(2\) an analysis of the relative signs gives \cref{eq:jfunc-longitude-dep}.
\end{proof}

\begin{lemma}
  \label{thm:scalar-lemma}
  There is a continuous function \(\Upsilon : \widetilde A_3 \to \CC^{\times}\) such that
  \begin{equation}
    \label{eq:Upsilon-def}
    \jfunc(D_{\RIII}, \widetilde a) = \Upsilon(\widetilde a) \id
  \end{equation}
  for each \(\widetilde a \in \widetilde A_3\).
\end{lemma}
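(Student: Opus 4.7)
The plan is to apply Schur's lemma at the level of $\W_\xi^{\otimes 3}$-modules. First, I would note that the underlying braid of $D_{\RIII}$ is trivial in $B_3$, so the boundary colors on the bottom equal those on the top and $\jfunc(D_{\RIII}, \widetilde{a})$ is a linear endomorphism of
\[
V \defeq \rep{\hat\chi_1} \otimes \rep{\hat\chi_2} \otimes \rep{\hat\chi_3}.
\]
This is automatically $\U_\xi$-linear by functoriality of $\jfunc$, but I want to upgrade to $\W_\xi^{\otimes 3}$-linearity.

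To that end, I would compose the six intertwining relations \eqref{eq:R-mat-module-diagram} satisfied by the individual $R$-matrices at the crossings of $D_{\RIII}$. Each crossing contributes a twist of the $\W_\xi^{\otimes 2}$-action on the two strands it touches by $\R^W$ (or $(\R^W)^{-1}$ at a negative crossing). Stacking these yields an identity of the form
\[
\jfunc(D_{\RIII}, \widetilde{a})\,\pi(u) = \pi(T(u))\,\jfunc(D_{\RIII}, \widetilde{a}) \quad \text{for all } u \in \W_\xi^{\otimes 3},
\]
where $T$ is the composition of the six corresponding $\R^W$-automorphisms in the appropriate tensor slots. I would then invoke the Yang-Baxter relation $\R^W_{12}\R^W_{23}\R^W_{12} = \R^W_{23}\R^W_{12}\R^W_{23}$ on $\Div(\W_\xi^{\otimes 3})$, which transfers from the corresponding statement for $\R$ on $\U_\xi^{\otimes 3}$ via the pullback square \eqref{eq:automorphism-pullback}; combined with the fact that the two triples of $\R^W$-factors coming from the top and bottom halves of $D_{\RIII}$ are mutually inverse, this forces $T = \id$, so $\jfunc(D_{\RIII}, \widetilde{a})$ is $\W_\xi^{\otimes 3}$-linear.

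Since each $\rep{\hat\chi_i}$ is an $\nr$-dimensional absolutely simple $\W_\xi$-module (it carries the unique simple representation of the matrix algebra $\W_\xi/I_{\hat\chi_i}$), the outer tensor product $V$ is absolutely simple as a $\W_\xi^{\otimes 3}$-module. Schur's lemma then produces a scalar $\Upsilon(\widetilde{a}) \in \CC$ with $\jfunc(D_{\RIII}, \widetilde{a}) = \Upsilon(\widetilde{a})\,\id$. Continuity of $\Upsilon$ on $\widetilde{A}_3$ follows from continuity of the matrix entries of $\jfunc(D_{\RIII}, \widetilde{a})$, which are built out of quantum dilogarithms via \cref{def:R-mat}, and non-vanishing of $\Upsilon$ follows from invertibility of each individual $R$-matrix at the non-pinched crossings (cf.\ \cref{thm:RII++-move}).

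The main obstacle is establishing the Yang-Baxter identity for $\R^W$ at the algebra level, which strengthens the partially-defined braid relation for $B$ in \cref{sec:braiding root of unity} from the character level to the level of automorphisms of $\Div(\W_\xi^{\otimes 3})$. It can either be verified directly from the formulas in \cref{thm:R-action-on-weyl} (a somewhat tedious but routine calculation) or derived from the standard Yang-Baxter relation for $\R$ on $\U_\xi^{\otimes 3}$ using the compatibility square \eqref{eq:automorphism-pullback}.
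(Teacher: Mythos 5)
Your argument is correct and is essentially the paper's own proof: the paper likewise combines the colored Yang--Baxter property of the outer automorphism with the Schur-type uniqueness underlying \cref{thm:R-matrix-exists} (simplicity of the \(\rep{\hat\chi_i}\) as \(\W_\xi\)-modules) to conclude that \(\jfunc(D_{\RIII}, \widetilde a)\) is a scalar multiple of the identity, with continuity of \(\Upsilon\) inherited from continuity of \(\jfunc\). One caveat: the algebra-level Yang--Baxter relation for \(\R^{\W}\) does not follow formally from the square \eqref{eq:automorphism-pullback} alone, since the image of \(\phi\otimes\phi\) does not generate \(\Div(\W_\xi^{\otimes 2})\), so of your two suggested routes the direct verification (or the derivation from the conjugation description of \(\R\)) is the one to rely on --- the paper itself simply asserts this colored Yang--Baxter property.
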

\begin{proof}
  Recall from \cref{thm:R-matrix-exists} that the \(R\)-matrices  defining \(\jfunc\) are characterized up to a scalar as intertwiners of the outer \(R\)-matrix \(\R\) (an algebra automorphism).
  Because \(\R\) satisfies a colored Yang-Baxter equation the \(R\)-matrices do as well, at least up to a scalar.
  This scalar is \(\Upsilon\), and because \(\jfunc\) is continuous it must be as well.
\end{proof}

\begin{lemma}
  \label{thm:cocyle-condition}
  For any \(\widetilde{a} \in \widetilde{A}_{3}\)
  \[
    \det \jfunc(D_{\RIII}, \widetilde a) = \pm 1
    .\qedhere
  \]
\end{lemma}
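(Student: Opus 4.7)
The plan is to compute $\det \jfunc(D_{\RIII})$ directly using the factorization from \cref{thm:R-factorization} and the combinatorial structure of the R3 diagram.

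First I would write $\jfunc(D_{\RIII})$ as the composition of six braidings $\tau R_i$, each acting on two of the three strands. Tensoring with identity on the uninvolved strand, each braiding contributes $(\det(\tau R_i))^N$ to the overall determinant, so
\[
\det \jfunc(D_{\RIII}) = \prod_{i=1}^{6}(\det \tau)^N \, (\det R_i)^N.
\]
Since $\det \tau = (-1)^{N(N-1)/2}$ is a fixed scalar, the six swap contributions multiply to $\pm 1$, and the task reduces to showing $\prod_{i=1}^{6}(\det R_i)^N = \pm 1$.

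Next I would use $R = \tfrac{1}{N}\Z_{\lE}(\Z_{\lN} \otimes \Z_{\lS})\Z_{\lW}$ to expand each $\det R_i$. The diagonal operators $\Z_{\lE}, \Z_{\lW}$ have determinants equal to $N$th powers of products $\prod_{k=0}^{N-1}\Lambda(\zeta^0, \zeta^1 | k)^{\pm 1}$, up to explicit powers of $\omega$. The shift operators $\Z_{\lN}, \Z_{\lS}$ are convolution matrices in the $\widehat v$-basis, diagonalizable by the discrete Fourier transform, with determinants expressible as products of Gauss-type sums of $\Lambda$-values using the identities from \cref{sec:quantum-dilogarithms}.

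The crucial step exploits the geometry of $D_{\RIII}$: each internal edge is shared by exactly two crossings of opposite signs, so the corresponding $\zeta_j^0$ and $\zeta_j^1$ parameters agree between those crossings up to the sign flip in the flattening formulas \eqref{eq:flattening-N}--\eqref{eq:flattening-E}. This produces pairwise cancellation of $\Lambda$-factors, analogous to (but more intricate than) the cancellations in the proof of \cref{thm:RII++-move}. The surviving terms depend only on the boundary log-parameters (which match on top and bottom of $D_{\RIII}$ by the definition of $\widetilde A_3$) and on the log-decoration (which is trivial by hypothesis), leaving at worst an overall sign.

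The main obstacle is the combinatorial bookkeeping: six crossings carry twenty-four $\zeta$-parameters, each appearing in several $\Lambda$-values, so tracking the cancellations is delicate, and the powers of $\omega$ and of $N$ coming from the normalizations of $\Z_{\lN}, \Z_{\lS}$ need to be combined with the diagonal contributions and shown to assemble into a $2N$th root of unity. A more conceptual alternative would combine $\jfunc(D_{\RIII}) = \Upsilon \cdot \id$ from \cref{thm:scalar-lemma} with the path-connectedness of $\widetilde A_3$ from \cref{thm:A-lemma}: if one could show $\Upsilon$ lies in a fixed discrete subset of $\CC^{\times}$, continuity would force it to be constant, and a single-point evaluation would finish the argument. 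Establishing such discreteness, however, seems to require essentially the same computation as the direct approach.
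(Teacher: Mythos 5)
Your opening step matches the paper: $\det\jfunc(D_{\RIII},\widetilde a)$ is the product over the six crossings of $(\det \tau R_i)^{\nr}$, and the paper evaluates each $\det\tau R_i$ via \cref{thm:R-mat-det} (which is itself proved from the factorization, so you do not need to redo the $\mathcal{Z}$-operator bookkeeping). The gap is in your ``crucial step.'' The claimed pairwise cancellation of $\Lambda$-factors between crossings sharing an internal edge does not happen: unlike the $\RII$ configuration, where the two crossings are literal inverses with the same four regions and identical $\zeta$-parameters, in $D_{\RIII}$ the characters on corresponding strands of the two sides are related by the (nontrivial) braiding map $B$, so the $\zeta_j^0,\zeta_j^1$ at the six crossings are genuinely different and the dilogarithm contributions $I(c)$ do not cancel crossing-by-crossing. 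In the paper only the elementary factors (the constant $C$, the region parameters $\gamma_{\lW}-\gamma_{\lE}$, the meridians, and the longitude terms, the last using the trivial-log-longitude hypothesis) cancel by direct bookkeeping; the product of the terms $\exp(-\nr I(c)/2\pi i)$ is handled by a different mechanism entirely. The two sides of the move correspond to octahedral triangulations related by $3$--$2$ moves, the parameters \eqref{eq:flattening-N}--\eqref{eq:flattening-E} form a flattening in the sense of Neumann, and the lifted dilogarithm $\dilr$ satisfies a five-term relation under such moves only up to multiples of $\pi i$ --- equivalently, one invokes the well-definedness of the complex Chern--Simons invariant. That is exactly where the $\pm 1$ (rather than $1$) comes from, and your argument has no substitute for this ingredient; no amount of tracking powers of $\omega$ and $\nr$ will produce it, because the needed identity is an analytic statement about the flattened dilogarithm, not a cancellation of $q$-factorials.

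Your proposed ``more conceptual alternative'' is circular, as you half-acknowledge: the discreteness of $\Upsilon$ is precisely what this lemma supplies (via $\Upsilon^{\nr^3}=\pm1$), and it is then combined with \cref{thm:scalar-lemma}, path-connectedness of $\widetilde A_3$, and a single evaluation at the Kashaev point in the proof of \cref{thm:RIII}; it cannot be used as input to prove the present determinant statement.
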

\begin{proof}
  The determinant of \(\jfunc(D_{\RIII}, \widetilde a)\) is the product of the determinants of braidings for each of the \(6\) crossings raised to the \(\nr\)th power.
  It thus suffices to show that the product of these determinants is \(\pm 1\).
  In \cref{thm:R-mat-det} we showed that the determinant of the braiding at a log-colored crossing \(c\) of sign \(\epsilon\) is
  \begin{equation*}
    \det \tau R
    =
    C^{\epsilon \nr^2}
    \left[
      e^{\pi i (\gamma_{\lW} - \gamma_{\lE}) }
      e^{2\pi i(\lambda_{1} + \lambda_{2})}
      e^{4\pi i \epsilon(\mu_{1} + \mu_{2})}
    \right]^{\nr(\nr-1)}
    e^{- \nr I(c) / 2\pi i}
    .
  \end{equation*}
  There are \(5\) factors in the determinant:
  \begin{enumerate}
    \item
      \(C = \nr/\df{0}^2 = \nr \exp\left( -2 \nr^{-1} \sum_{k=0}^{\nr-1} k \log(1 - \omega^{k}) \right)\) is a constant depending only on \(\nr\),
    \item
      \(\gamma_{\lE}\), \(\gamma_{\lW}\) are two of the region log-parameters at the crossing,
    \item \(\mu_1\) and \(\mu_{2}\) are the log-meridians of the two components at the crossing,
    \item
      \(\lambda_{1}\) and \(\lambda_{2}\) are the contributions of the crossing to the log-longitude, and
    \item 
      \(I(c)\) is a sum of four dilogarithms.
  \end{enumerate}

  We can check by elementary methods that the product of terms 1, 2, and 3 over \(D_{\RIII}\) is \(1\).
  The product of term \(4\) over the diagram is \(1\) because of our assumption that the total log-longitudes in our log-coloring are \(0\).
  Dealing with the remaining term \(5\) is more complicated: the fact that it gives \(\pm 1\) is a consequence of the fact that the complex Chern-Simons invariant is well-defined and given by Neumann's sum of lifted dilogarithms \cite{Neumann2004}; algebraically, this is because the lifted dilogarithm obeys a \(5\)-term relation.
  A definition of the complex Chern-Simons invariant for tangles and a detailed proof of invariance is given in \cite{McPhailSnyderVolume}, and this implies that the product of term \(5\) over the diagram is \(\pm 1\) as claimed.
\end{proof}

\begin{proof}[Proof of \cref{thm:RIII}]
  We need to show that \(\Upsilon = 1\) uniformly.
  Taking the determinant of both sides of \cref{eq:Upsilon-def} and applying \cref{thm:cocyle-condition} shows
  \[
    \det \jfunc(D_{\RIII}, \widetilde{a})
    =
    \Upsilon(\widetilde{a})^{\nr^3}
    =
    \pm 1
  \]
  for every \(\widetilde{a} \in \widetilde{A}_3\).
  We conclude that \(\Upsilon\) takes values in the discrete set \[\set{w \in \CC\given w^{\nr^3} = -1}\] of \(\nr^3\)th roots of \(-1\).
  Thus \(\Upsilon\) is a discrete-valued, continuous function on a connected space, so it must be constant.
  To show that this constant is \(1\) we simply need to check that \(\Upsilon(\widetilde{a}_0) = 1\) for some \(\widetilde{a}_0 \in \widetilde{A}_3\).

  We choose a point \(\widetilde{a}_{0}\) defining the Kashaev invariant.
  The underlying \(\chi\)-coloring of \(\widetilde{a}_0\) has \(\chi_1 = \chi_3 = (-1,1,-1)\) and \(\chi_2 = (-1,-1,-1)\).
  To get a point of \(\widetilde{A}_3\) we need to choose (part of) a log-coloring.
  With the labeling of \cref{fig:RIII-log-parameters} we set \(\mu_1, \mu_2, \mu_3 = -1/2\), \(\gamma_0 = 0\), \(\alpha_1 = \alpha_2 = \alpha_3 = -1/2\), \(\beta_1 = 0\), \(\beta_2 = -1/2\), and \(\beta_3 = -1\).

  We want to compute \(\jfunc(D_{\RIII}, \widetilde{a}_0)\).
  To do so we need to choose a log-coloring of the internal segments and regions of the diagram.
  Because all the crossings are pinched is is simplest to choose a \emph{standard} log-coloring as in \cref{eq:standard-flattening}.
  This is possible because of our choices of \(\beta_1, \beta_2, \beta_3\) above.
  Choosing a standard log-coloring also ensures we satisfy \cref{eq:RIII beta condition}.
  By \cref{thm:R-mat-pinched} (alternatively by \cref{thm:pinched-R-matrix-dual}) the \(R\)-matrices in our diagram are all the same as the \(R\)-matrix defining the Kashaev invariant.
  This matrix is well-known to satisfy the braid relation exactly, so we conclude that \(\jfunc(D_{\RIII}, \widetilde{a}_0) = \id\) and \(\Upsilon(\widetilde{a}_0) = 1\).
\end{proof}

\section{\texorpdfstring{\(R\)}{R}-matrix computations}
\label{sec:R-matrix-computations}
This section contains computations with the \(R\)-matrices.
We refer frequently to the identities concerning shifted \(q\)-factorials and quantum dilogarithms collected in \cref{sec:quantum-dilogarithms}.

\begin{lemma}
  \label{thm:R-mat-phi-form}
  The \(R\)-matrix coefficients \eqref{eq:R-mat-positive} associated to a positive, log-colored crossing can be written as
  \begin{equation}
    \label{eq:R-mat-positive-phi-form}
    \begin{aligned}
      \rmat{n_1}{n_2}{n_1'}{n_2'}
    &=
    \frac{
      \omega^{\Theta/2}
      (\omega^{-\nr \zeta_{\lW}^{0}} -1)
      }{
      \nr 
    }
    \frac{
      \pf{\zeta_{\lN}^0 + n_2' - n_1}
      \pf{\zeta_{\lS}^0 + n_2 - n_1'}
      }{
      \pf{\zeta_{\lE}^0 + n_2' - n_1'}
      \pf{\zeta_{\lW}^0 + n_2 - n_1 - 1}
    }
    \\
    &\phantom{=}\times
    \omega^{\zeta_{\lW}^{0} + n_{2} - n_{1}}
    \omega^{
      n_1(\alpha_{1}  -\mu_1)
      +
      n_2(\alpha_{2} + \mu_2)
      -
      n_1'(\alpha_{1'} -\mu_1)
      -
      n_2'(\alpha_{2'} + \mu_2)
    }
  \end{aligned}
  \end{equation}
  Similarly for a negative crossing the matrix coefficients \eqref{eq:R-mat-negative} are given by 
  \begin{equation}
    \label{eq:R-mat-negative-phi-form}
    \begin{aligned}
      \rmatm{n_1}{n_2}{n_1'}{n_2'}
    &=
    \frac{
      \omega^{-\Theta/2}
      (\omega^{-\nr \zeta_{\lW}^{0}} -1)
      }{
      \nr 
    }
    \frac{
      \pf{-\zeta_{\lN}^0 + n_2' - n_1}
      \pf{-\zeta_{\lS}^0 + n_2 - n_1'}
      }{
      \pf{-\zeta_{\lE}^0 + n_2' - n_1'}
      \pf{-\zeta_{\lW}^0 + n_2 - n_1 - 1}
    }
    \\
    &\phantom{=}\times
    \omega^{\zeta_{\lW}^{0} + n_{1} - n_{2}}
    \omega^{
      n_1(\alpha_{1}  + \mu_1)
      +
      n_2(\alpha_{2} - \mu_2)
      -
      n_1'(\alpha_{1'} + \mu_1)
      -
      n_2'(\alpha_{2'} - \mu_2)
    }
    \\
    &\phantom{=}\times
    \omega^{
      -
      (\zeta_{\lW}^{0} - \zeta_{\lS}^{0} + n_{1} - n_{1}')
      (\zeta_{\lW}^{0} - \zeta_{\lN}^{0} + n_{2} - n_{2}')
    }
    \end{aligned}
  \end{equation}
  where in both cases
  \begin{equation*}
    \Theta
    \defeq
    \zeta_{\lW}^{0} \zeta_{\lW}^{1}
    +
    \zeta_{\lE}^{0} \zeta_{\lE}^{1}
    -
    \zeta_{\lN}^{0} \zeta_{\lN}^{1}
    -
    \zeta_{\lS}^{0} \zeta_{\lS}^{1}
    .
    \qedhere
  \end{equation*}
\end{lemma}
\begin{proof}
  The positive case \eqref{eq:R-mat-positive-phi-form} follows form the definition \eqref{eq:qlf-def} of \(\qlf{\zeta^{0}, \zeta^{1}}{n}\) and some elementary algebra.
  For the negative case \eqref{eq:R-mat-negative-phi-form} we must also use the relation \(\zeta_{\lN}^{0} + \zeta_{\lS}^{0} = \zeta_{\lW}^{0} + \zeta_{\lE}^{0}\) and the inversion relation \cite[equation (16)]{Faddeev2001}, which in our conventions is
  \begin{equation}
    \label{eq:pf inversion relation}
    \pf{\zeta} \pf{- \zeta - 1} 
    =
    (1 - \omega^{- \nr \zeta})
    \exp\left(
      \frac{
        \pi i (\nr + 1/\nr)
        }{
        12
      }
      - \frac{\pi i}{\nr} \left[ \zeta - \frac{\nr -1}{2} \right]^{2}
    \right)
    .
  \end{equation}

\end{proof}

\subsection{Log-parameter dependence}
\label{sec:log-paramater-dependence}

Here we compute the dependence of the \(R\)-matrix on the choice of log-coloring as used in the proof of \cref{thm:RIII}.

\begin{lemma}
  \label{thm:R-matrix transformation}
  \begin{thmenum}
  \item
    \label{thm:R-matrix transformation:kappa}
    The \(R\)-matrix coefficients of \cref{def:R-mat} are independent of the choice of \(\kappa\).
  \item
    \label{thm:R-matrix transformation:gamma}
    Let \(\mathfrak{f}\) and \(\tilde{\mathfrak{f}}\) be log-colorings of a crossing for which the \(\mu_i\) and \(\beta_i\) are the same and the \(\gamma_i\) differ by integers:
    \begin{align*}
      \tilde \mu_j &= \mu_j
                   &
      \tilde \beta_j &= \beta_j
                     &
      \tilde \gamma_j &= \gamma_j + k_j
    \end{align*}
    Then the coefficients of the \(R\)-matrix are related by
    \begin{equation}
      \label{eq:R-matrix transformation:gamma}
      \rmat[\tilde{\mathfrak{f}}]{n_{1}}{n_{2}}{n_{1}'}{n_{2}'}
      =
      \omega^{
        \frac{1}{2}
        \Gamma(\tilde{\mathfrak{f}}, \mathfrak{f}) 
      }
      \omega^{
        k_{\lN}(n_{2}' - n_{1})
            +
            k_{\lS}(n_{2} - n_{1}')
            -
            k_{\lW}(n_{2} - n_{1})
            -
            k_{\lE}(n_{2}' - n_{1}')
          }
          \rmat[\mathfrak{f}]{n_{1}}{n_{2}}{n_{1}'}{n_{2}'}
      \end{equation}
      where
      \[
        \Gamma(\tilde{\mathfrak{f}}, \mathfrak{f})
          =
          k_{\lN}\zeta_{\lN}^{0}
          +
          k_{\lS}\zeta_{\lS}^{0}
          -
          k_{\lW}\zeta_{\lW}^{0}
          -
          k_{\lE}\zeta_{\lE}^{0}
      \]
    \item
      \label{thm:R-matrix transformation:beta}
      Let \(\mathfrak{f}\) and \(\tilde{\mathfrak{f}}\) be log-colorings of a crossing for which the \(\mu_i\) and \(\gamma_i\) are the same and the \(\beta_i\) differ by integers:
      \begin{align*}
        \tilde \mu_j &= \mu_j
                     &
        \tilde \beta_j &= \beta_j + l_j
                       &
        \tilde \gamma_j &= \gamma_j
      \end{align*}
      Then the \(R\)-matrix coefficients are related by
      \begin{equation}
        \label{eq:R-matrix transformation:beta}
          \rmat[\tilde{\mathfrak{f}}]{n_{1}}{n_{2}}{n_{1}'}{n_{2}'}
          =
          \omega^{
            \frac{1}{2} \mathrm{B}(\tilde{\mathfrak{f}}, \mathfrak{f})
          }
          \rmat[\mathfrak{f}]{n_{1} + l_{1},}{n_{2} + l_{2}}{n_{1}' + l_{1}', }{n_{2}' + l_{2}'}
      \end{equation}
      where
      \[
        \begin{aligned}
          \mathrm{B}(\tilde{\mathfrak{f}}, \mathfrak{f})
          &=
          l_{2'}(\gamma_{\lE} - \gamma_{\lW} - \epsilon_{2'} \mu_{2})
          +
          l_{1'}(\gamma_{\lS} - \gamma_{\lE} - \epsilon_{1'} \mu_{2})
          \\
          &\phantom{=}
          +
          l_{2}(\gamma_{\lE} - \gamma_{\lW} - \epsilon_{2} \mu_{2})
          +
          l_{1}(\gamma_{\lS} - \gamma_{\lE} - \epsilon_{1} \mu_{2})
        \end{aligned}
      \]
      and the \(\epsilon_i\) are the signs of \cref{fig:crossing-log-dec-signs}.
  \end{thmenum}
\end{lemma}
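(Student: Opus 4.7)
The plan is to prove each part by tracking how the matrix coefficients \(\rmat{n_1}{n_2}{n_1'}{n_2'}\) of \cref{eq:R-mat-positive} transform, using the recurrence \cref{eq:qlf recurrence} together with the identities on \(\qlfname\) from \cref{sec:quantum-dilogarithms}; the negative-crossing case follows in parallel from \cref{eq:R-mat-negative}. For part (a), two choices of logarithm of \(K\) differ by an integer \(c\), shifting every \(\zeta_j^1\) to \(\zeta_j^1 + c\) while leaving the \(\zeta_j^0\) unchanged. The recurrence gives \(\qlf{\zeta^0, \zeta^1+c}{n} = \omega^{-nc} \cdot (\text{purely \(c\)-dependent scalar}) \cdot \qlf{\zeta^0, \zeta^1}{n}\), and the four scalars cancel in pairs between numerator and denominator of \cref{eq:R-mat-positive}. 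The integer-index exponents add to \((n_2'-n_1) + (n_2-n_1') - (n_2-n_1-1) - (n_2'-n_1') = 1\), so the total \(\Lambda\)-contribution is \(\omega^{-c}\). This is cancelled by the shift \(\omega^{-(\nr-1)c}\) of the prefactor \(\omega^{-(\nr-1)(\zeta_{\lW}^0 + \zeta_{\lW}^1)}\), leaving \(\omega^{-\nr c} = 1\).

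For part (b), the shift \(\gamma_j \to \gamma_j + k_j\) with \(\kappa\) held fixed sends \(\zeta_j^1 \to \zeta_j^1 - k_j\) at each corner while leaving \(\zeta_j^0\) fixed. The recurrence again produces a factor \(\omega^{k_j n_j^{\mathrm{int}}}\) from each corner, where \(n_j^{\mathrm{int}}\) is the integer argument appearing in \cref{eq:R-mat-positive}. With the signs dictated by numerator versus denominator, and after absorbing the compensating \(\omega^{(\nr-1)k_{\lW}}\) from the shifted prefactor using \(\omega^{\nr k_{\lW}} = 1\), these assemble into exactly the exponent in \cref{eq:R-matrix transformation:gamma}. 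The scalar \(\omega^{\Gamma/2}\) then accounts for the change of the four \(\qlf{\zeta^0, \zeta^1}{0}\) constants under integer shifts of \(\zeta^1\): the Gaussian-type identity from \cref{sec:quantum-dilogarithms} contributes a multiplier \(\omega^{\frac{1}{2} k_j \zeta_j^0}\) from each corner, which combine with the numerator/denominator signs into \(\omega^{\Gamma(\tilde{\mathfrak{f}}, \mathfrak{f})/2}\).

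For part (c), the shift \(\beta_j \to \beta_j + l_j\) perturbs each \(\zeta_i^0\) by an integer combination of two \(l\)'s and simultaneously shifts \(\kappa\) (since \(K\) depends on \(\beta_1, \beta_{2'}\)); by part (a) we may absorb the latter shift freely. The key identity, derivable directly from \cref{eq:qlf recurrence}, is
\[
  \qlf{\zeta^0 + a, \zeta^1}{n} = C(a, \zeta^0, \zeta^1) \cdot \qlf{\zeta^0, \zeta^1}{n+a}
\]
for integer \(a\), with \(C(a, \zeta^0, \zeta^1)\) independent of \(n\). Applying this to each of the four \(\Lambda\)-factors in \cref{eq:R-mat-positive} reindexes each integer argument so that, for example, \(n_2' - n_1 \mapsto (n_2' + l_{2'}) - (n_1 + l_1)\) at \(\lN\). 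This is precisely the substitution \(n_j \mapsto n_j + l_j\) on the right-hand side of \cref{eq:R-matrix transformation:beta}, and the remaining Gaussian prefactors \(C(a,-,-)\) from the four corners combine with the shift of the overall prefactor into \(\omega^{\mathrm{B}(\tilde{\mathfrak{f}},\mathfrak{f})/2}\).

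The main obstacle is the bookkeeping---particularly the half-integer exponents \(\omega^{\Gamma/2}\) and \(\omega^{\mathrm{B}/2}\), which come from the quadratic (Gaussian) part of the quantum dilogarithm and require careful use of the identities in \cref{sec:quantum-dilogarithms}. In part (c) the signs \(\epsilon_i\) attached to half-segments must also be tracked through the reindexing, which is the most delicate step.
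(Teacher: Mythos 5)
Your overall route---pushing each \(\Lambda\)-factor of \cref{eq:R-mat-positive} through the integer shift identities for the cyclic quantum dilogarithm---is the same as the paper's: parts (b) and (c) of your proposal reproduce its argument essentially verbatim (the paper applies \cref{eq:qlf-shift-1} for the \(\gamma\)-shifts and \cref{eq:qlf-shift-0} for the \(\beta\)-shifts, exactly the ``Gaussian'' identities you appeal to, with the same reindexing \(n_j \mapsto n_j + l_j\) in (c) and the same absorption of the \(\omega^{(\nr-1)k_{\lW}}\) prefactor in (b)).

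The genuine gap is in part (a). Under \(\zeta_j^1 \mapsto \zeta_j^1 + c\) the constant of each factor changes by \(\omega^{-c\zeta_j^0/2}\) (this is \cref{eq:qlf-shift-1}), which is \emph{not} purely \(c\)-dependent: it depends on the corner through \(\zeta_j^0\), so the four such scalars do not ``cancel in pairs between numerator and denominator'' as you assert. What actually kills them is the linear relation \(\zeta_{\lN}^0 + \zeta_{\lS}^0 - \zeta_{\lW}^0 - \zeta_{\lE}^0 = 0\), which holds by \cref{eq:flattening-N,eq:flattening-W,eq:flattening-S,eq:flattening-E} (both \(\zeta_{\lN}^0 + \zeta_{\lS}^0\) and \(\zeta_{\lW}^0 + \zeta_{\lE}^0\) equal \(\epsilon(\beta_2 + \beta_{2'} - \beta_1 - \beta_{1'} + \mu_2 - \mu_1)\)), so that the aggregate Gaussian contribution is \(\omega^{-\frac{c}{2}(\zeta_{\lN}^0+\zeta_{\lS}^0-\zeta_{\lW}^0-\zeta_{\lE}^0)} = 1\); this is exactly how the paper's computation closes. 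Without that relation your proof of \(\kappa\)-independence is incomplete, and (a) is then reused in your part (c). Your index bookkeeping in (a)---the integer exponents summing to \(1\) and cancelling the shifted prefactor via \(\omega^{\nr c}=1\)---is correct. One further small inaccuracy in (c): \(K\) depends only on the exponentiated data \(b_1, b_{2'}, e^{2\pi i \gamma_{\lN}}\), so integer shifts of the \(\beta_j\) do not change \(K\) and no shift of \(\kappa\) is forced; since you invoke (a) anyway, this is harmless.
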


\begin{figure}
  \centering
  %% Creator: Inkscape 1.1.2 (0a00cf5339, 2022-02-04), www.inkscape.org
%% PDF/EPS/PS + LaTeX output extension by Johan Engelen, 2010
%% Accompanies image file 'crossing-log-dec-signs.pdf' (pdf, eps, ps)
%%
%% To include the image in your LaTeX document, write
%%   \input{<filename>.pdf_tex}
%%  instead of
%%   \includegraphics{<filename>.pdf}
%% To scale the image, write
%%   \def\svgwidth{<desired width>}
%%   \input{<filename>.pdf_tex}
%%  instead of
%%   \includegraphics[width=<desired width>]{<filename>.pdf}
%%
%% Images with a different path to the parent latex file can
%% be accessed with the `import' package (which may need to be
%% installed) using
%%   \usepackage{import}
%% in the preamble, and then including the image with
%%   \import{<path to file>}{<filename>.pdf_tex}
%% Alternatively, one can specify
%%   \graphicspath{{<path to file>/}}
%% 
%% For more information, please see info/svg-inkscape on CTAN:
%%   http://tug.ctan.org/tex-archive/info/svg-inkscape
%%
\begingroup%
  \makeatletter%
  \providecommand\color[2][]{%
    \errmessage{(Inkscape) Color is used for the text in Inkscape, but the package 'color.sty' is not loaded}%
    \renewcommand\color[2][]{}%
  }%
  \providecommand\transparent[1]{%
    \errmessage{(Inkscape) Transparency is used (non-zero) for the text in Inkscape, but the package 'transparent.sty' is not loaded}%
    \renewcommand\transparent[1]{}%
  }%
  \providecommand\rotatebox[2]{#2}%
  \newcommand*\fsize{\dimexpr\f@size pt\relax}%
  \newcommand*\lineheight[1]{\fontsize{\fsize}{#1\fsize}\selectfont}%
  \ifx\svgwidth\undefined%
    \setlength{\unitlength}{277.34116745bp}%
    \ifx\svgscale\undefined%
      \relax%
    \else%
      \setlength{\unitlength}{\unitlength * \real{\svgscale}}%
    \fi%
  \else%
    \setlength{\unitlength}{\svgwidth}%
  \fi%
  \global\let\svgwidth\undefined%
  \global\let\svgscale\undefined%
  \makeatother%
  \begin{picture}(1,0.47412055)%
    \lineheight{1}%
    \setlength\tabcolsep{0pt}%
    \put(0,0){\includegraphics[width=\unitlength,page=1]{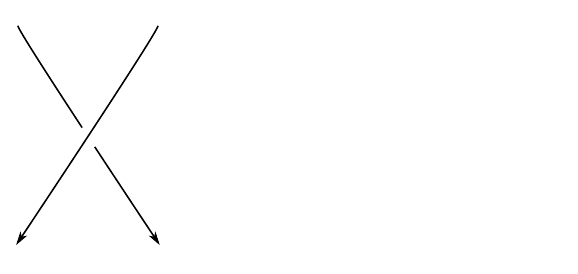}}%
    \put(0.26092847,0.44174785){\makebox(0,0)[lt]{\lineheight{1.25}\smash{\begin{tabular}[t]{l}$-$\end{tabular}}}}%
    \put(0.00722541,0.44174352){\makebox(0,0)[lt]{\lineheight{1.25}\smash{\begin{tabular}[t]{l}$+$\end{tabular}}}}%
    \put(0.26092847,0.00586272){\makebox(0,0)[lt]{\lineheight{1.25}\smash{\begin{tabular}[t]{l}$-$\end{tabular}}}}%
    \put(0.00722541,0.00635921){\makebox(0,0)[lt]{\lineheight{1.25}\smash{\begin{tabular}[t]{l}$+$\end{tabular}}}}%
    \put(0.81543474,0.44174785){\makebox(0,0)[lt]{\lineheight{1.25}\smash{\begin{tabular}[t]{l}$+$\end{tabular}}}}%
    \put(0.56173168,0.44174352){\makebox(0,0)[lt]{\lineheight{1.25}\smash{\begin{tabular}[t]{l}$-$\end{tabular}}}}%
    \put(0.81543474,0.00586272){\makebox(0,0)[lt]{\lineheight{1.25}\smash{\begin{tabular}[t]{l}$+$\end{tabular}}}}%
    \put(0.56173168,0.00635921){\makebox(0,0)[lt]{\lineheight{1.25}\smash{\begin{tabular}[t]{l}$-$\end{tabular}}}}%
    \put(0,0){\includegraphics[width=\unitlength,page=2]{crossing-log-dec-signs.pdf}}%
  \end{picture}%
\endgroup%

  \caption{Signs for the dependence of the \(R\)-matrix on the segment log-parameters \(\beta_{i}\).}
  \label{fig:crossing-log-dec-signs}
\end{figure}

\begin{proof}
  \ref{thm:R-matrix transformation:kappa}
  This is immediate from \cref{eq:qlf-shift-1}.
  Consider a positive crossing (the negative case is similar).
  If we choose \(\kappa + p\) instead of \(\kappa\) for \(p \in \ZZ\), by \eqref{eq:qlf-shift-1} the new matrix coefficients are
  \begin{align*}
      &
      \frac{
        \omega^{-(\nr-1)(\zeta_{\lW}^0 + \zeta_{\lW}^1 + p)}
      }{
        \nr
      }
      \omega^{n_2 - n_1}
      \frac{
        \qlf{\zeta_{\lN}^0, \zeta_{\lN}^1 + p}{n_2' - n_1}
        \qlf{\zeta_{\lS}^0, \zeta_{\lS}^1 + p}{n_2 - n_1'}
        }{
          \qlf{\zeta_{\lW}^0, \zeta_{\lW}^1 + p}{n_2 - n_1 - 1}
          \qlf{\zeta_{\lE}^0, \zeta_{\lE}^1 + p}{n_2' - n_1'}
      }
      \\
      &=
      \omega^{p - p(\zeta_{\lN}^0 + \zeta_{\lS}^0 - \zeta_{\lW}^0 - \zeta_{\lE}^0) - p(n_2' - n_1 + n_2 - n_1' - (n_2 - n_1 - 1) -(n_2' - n_1'))}
      \rmat{n_1}{n_2}{n_1'}{n_2'}
      \\
      &=
      \rmat{n_1}{n_2}{n_1'}{n_2'}
  \end{align*}
  so we get the same coefficients as for our original choice of \(\kappa\).

  \ref{thm:R-matrix transformation:gamma}
  Suppose the crossing is positive.
  We can use \cref{eq:qlf-shift-1} to find
  \begin{align*}
    \rmat[\tilde{\mathfrak{f}}]{n_1}{n_2}{n_1'}{n_2'}
    &=
    \frac{
      \omega^{-(\nr-1)(\zeta_{\lW}^0 + \zeta_{\lW}^1 - k_{\lW})}
    }{
      \nr
    }
    \omega^{n_2 - n_1}
    \frac{
      \qlf{\zeta_{\lN}^0, \zeta_{\lN}^1 - k_{\lN}}{n_2' - n_1}
      \qlf{\zeta_{\lS}^0, \zeta_{\lS}^1 - k_{\lS}}{n_2 - n_1'}
    }{
      \qlf{\zeta_{\lW}^0, \zeta_{\lW}^1 - k_{\lW}}{n_2 - n_1 - 1}
      \qlf{\zeta_{\lE}^0, \zeta_{\lE}^1 - k_{\lE}}{n_2' - n_1'}
    }
    \\
    &=
    \omega^{- k_{\lW}}
    \omega^{
      \frac{1}{2}
      (
      k_{\lN}\zeta_{\lN}^{0}
      +
      k_{\lS}\zeta_{\lS}^{0}
      -
      k_{\lW}\zeta_{\lW}^{0}
      -
      k_{\lE}\zeta_{\lE}^{0}
      )
    }
    \omega^{
      k_{\lN}(n_{2}' - n_{1})
      +
      k_{\lS}(n_{2} - n_{1}')
      -
      k_{\lW}(n_{2} - n_{1} - 1)
      -
      k_{\lE}(n_{2}' - n_{1}')
    }
    \rmat[\mathfrak{f}]{n_1}{n_2}{n_1'}{n_2'}
  \end{align*}
  as claimed, and a similar computation works for negative crossings.

  \ref{thm:R-matrix transformation:beta}
  Again suppose the crossing is positive.
  \Cref{eq:qlf-shift-0} shows that
  \[
    \qlf{\zeta_{\lN}^0 + l_{2}' - l_{1}, \zeta_{\lN}^1}{n_2' - n_1 - l_{2}' + l_{1}}
    =
    \omega^{
      \frac{1}{2}
      (l_{2}' - l_{1}) \zeta_{\lN}^{1}
    }
    \qlf{\zeta_{\lN}^0, \zeta_{\lN}^1}{n_2' - n_1}
  \]
  and repeating for the other factors shows that
  \begin{align*}
    \rmat[\tilde{\mathfrak{f}}]{n_1 - l_{1},}{n_2 - l_{2}}{n_1' - l_{1}', }{n_2' - l_{2}'}
    &=
    \omega^{
      \frac{1}{2}
      \left[
      l_{2}'( \zeta_{\lN}^{1} - \zeta_{\lE}^{1} )
      +
      l_{1}'( \zeta_{\lE}^{1} - \zeta_{\lS}^{1} )
      +
      l_{2}( \zeta_{\lS}^{1} - \zeta_{\lW}^{1} )
      +
      l_{1}( \zeta_{\lW}^{1} - \zeta_{\lN}^{1} )
    \right]
    }
    \rmat[\mathfrak{f}]{n_1}{n_2}{n_1'}{n_2'}
    .
  \end{align*}
  (Here the power of \(\omega\) from \(\omega^{n_{2} - n_{1}}\) cancels with that from \(\omega^{-(\nr-1)\zeta_{\lW}^{0}}\).)
  For a positive crossing \cref{eq:flattening-N,eq:flattening-E} give
  \[
    \zeta_{\lN}^{1} - \zeta_{\lE}^{1}
    =
    (\kappa - \gamma_{\lN}) - ( \kappa - \gamma_{\lE} - \mu_{2})
    =
    \gamma_{\lE} - \gamma_{\lN} + \mu_{2}
  \]
  as claimed, and for a positive crossing \cref{fig:crossing-log-dec-signs} says that \(\epsilon_{2} = +1\).
  Similar computations give the coefficients of the other \(l_i\) and the negative crossing case.
\end{proof}

\subsection{The pinched limit of the \texorpdfstring{\(R\)}{R}-matrix}
\label{sec:pinched-limit}

In the computation of the \(R\)-matrix in \cref{sec:R-matrix} we excluded certain singular (``pinched'') configurations of characters.
We now explain how to derive the \(R\)-matrix there as a limit of the generic case.

\begin{definition}
  \label{def:standard-flattening}
  Consider a \(\chi\)-colored crossing as in \cref{fig:crossing-regions}.
  If any of the relations
  \begin{equation}
    \label{eq:pinched-crossing-conditions}
    b_2 = m_1 b_1
    ,\quad
    m_2 b_2 = m_1 b_{1'}
    ,\quad
    b_{2'} = b_{1}
    ,\quad
    m_2 b_{2'} = b_{1'}
  \end{equation}
  hold it is not hard to show that all of them do.
  In this case we say the crossing is \defemph{pinched}.
  There is an obvious way to choose logarithms \(\beta_i\) of the parameters \(b_i\) at a pinched crossing; we say a log-coloring of such a crossing is \defemph{standard} if there is a \(\beta\) so that
  \begin{align}
    \label{eq:standard-flattening}
    \beta_{1} &= \beta,
              &
    \beta_{2} &= \beta + \mu_1,
              &
    \beta_{1'} &= \beta + \mu_2\text{, and}
               &
    \beta_{2'} &= \beta.
  \end{align}
  This is equivalent to requiring
  \(\zeta_{\lN}^0 = \zeta_{\lW}^0 = \zeta_{\lS}^0 = \zeta_{\lE}^0 = 0\)
  in terms of the parameters (\ref{eq:flattening-N}--\ref{eq:flattening-E}).
\end{definition}

\begin{remark}
  \label{rem:pinched eigenlines}
  This definition has a geometric interpretation discussed in more detail in \cite{McPhailSnyder2024}.
  Consider a braid diagram \(D\) and write \(\pi\) for the fundamental group of the complement of the braid.
  Via the Wirtinger presentation \(\pi\) is generated by meridians.
  A \(\chi\)-coloring of \(D\) determines both a representation \(\rho : \pi \to \slg\) and a \defemph{decoration} of \(\rho\), which is a distinguished eigenspace \(L \in \mathbb{C}P^1\) for the image of each meridian under \(\rho\) (generically \(g \in \slg\) has two eigenspaces, and we pick one).
  A crossing is pinched exactly when the eigenspaces \(L_{1}, L_{2}\) of the incoming overstrand and understrand are equal.
\end{remark}

\begin{remark}
  \label{rem:standard-flattening}
  We can use \cref{thm:R-matrix transformation} to write the \(R\)-matrix of a nonstandard log-coloring in terms of a standard one with some index shifts and scalar factors that are nonsingular in the pinched limit.
  As such, we consider only standard pinched log-colorings.
\end{remark}

For \(k \in \ZZ\) we write \(\modb{k}\) for the unique integer with
\[
  0 \le \modb{k} < \nr
  \text{ and }
  \modb{k} \equiv k \pmod \nr
\]
We also write
\[
  \cutoff{k}
  =
  \begin{cases}
    1 & 0 \le k \le \nr -1
    \\
    0 & \text{ otherwise,}
  \end{cases}
  =
  \begin{cases}
    1 & k = \modb{k}
    \\
    0 & \text{ otherwise.}
  \end{cases}
\]
Recall the \(q\)-Pochhammer symbol \(\qp{a}{n}\) of \eqref{eq:qp-def}, which satisfies
\[
  \qp{\omega}{n} = (1 - \omega) \cdots (1 - \omega^{n -1}) \text{ for } n \ge 0.
\]

\begin{theorem}
  \label{thm:R-mat-pinched}
  \begin{thmenum}
    \item
    \label{thm:R-mat-pinched:regular}
     The \(R\)-matrices \eqref{eq:R-mat-positive} are well-defined in the pinched limit where \(e^{2\pi i \zeta_j^0} \to 1\).
    \item
    \label{thm:R-mat-pinched:formula}
      At a positive, pinched crossing with a standard log-coloring the matrix coefficients are given by
      \begin{equation}
        \label{eq:R-mat-pinched:formula}
        \begin{aligned}
          \rmat{n_1}{n_2}{n_1'}{n_2'} 
          &=
          \frac{1}{\nr}
          \theta_{n_1 n_2}^{n_1' n_2'}
          A_{n_1 n_2}^{n_1' n_2'}
          \\
          &\phantom{=}\times
          \omega^{
            n_1(\alpha_1-\mu_1 - 1)
            +
            n_2(\alpha_2 + \mu_2 + 1)
            -
            n_1' (\alpha_{1'} - \mu_{1})
            -
            n_2' (\alpha_{2'} + \mu_2)
          }
          \\
          &\phantom{=}\times
          \frac{
            \qp{\omega}{\modb{n_2' - n_1'}}
            \qp{\omega}{\modb{n_2 - n_1 - 1}}
            }{
            \qp{\omega}{\modb{n_2' - n_1}}
            \qp{\omega}{\modb{n_2 - n_1'}}
          }.
        \end{aligned}
      \end{equation}
    Here
    \[
      \theta_{n_1 n_2}^{n_1' n_2'}
      =
      \cutoff{
        \modb{n_1 - n_2} + \modb{n_1' - n_2' -1}
      }
      \cutoff{
        \modb{n_2' - n_1} + \modb{n_2 - n_1'}
      }
    \]
    and
    \[
      A_{n_1 n_2}^{n_1' n_2'}
      =
      \frac{a_{1'}}{a_1}
      \left( \frac{a_1}{ m_1} \right)^{2 - \cutoff{n_1 - n_2} - \cutoff{n_2 - n_1'}}
      (a_2 m_2)^{- \cutoff{n_2 - n_1'}}
      (a_{2'} m_2)^{1- \cutoff{n_1' - n_2' - 1}}
      .
    \]
  \item
    \label{thm:R-mat-pinched:Kashaev}
    In particular, when \(\alpha_i = \mu_i = -1/2 \text{ or } (\nr - 1)/2\) for all \(i\) we obtain
    \begin{equation}
      \label{eq:Kashaev-R-matrix}
      \rmat{n_1}{n_2}{n_1'}{n_2'} 
      =
      \theta_{n_1 n_2}^{n_1' n_2'}
      \frac{
        \nr
        \omega^{
          n_2' - n_1
          +1/2
        }
        }{
        \qp{\omega}{\modb{n_2' - n_1}}
        \qp{\omega}{\modb{n_2 - n_1'}}
        \qp{\overline{\omega}}[\overline{\omega}]{\modb{n_1' - n_2' - 1}}
        \qp{\overline{\omega}}[\overline{\omega}]{\modb{n_1 - n_2}}
      }
    \end{equation}
    Up to complex conjugation this is Kashaev's \(R\)-matrix \cite{Kashaev1995}.
    Specifically, \eqref{eq:Kashaev-R-matrix} is \((R_K)_{n_2' n_1'}^{n_1 n_2}\) at \(q = \overline{\omega}\), where \(R_K\) is the matrix given in \cite[Section 4.1]{Murakami2018} .
    \qedhere
  \end{thmenum}
\end{theorem}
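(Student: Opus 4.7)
The plan is to compute the pinched limit of the positive $R$-matrix formula \cref{eq:R-mat-positive} directly: parts (a) and (b) both follow from this single computation, and part (c) is a routine specialization.

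First I would apply the recurrence \cref{eq:qlf recurrence} together with the modular periodicity of $\qlfname$ to write each factor as
\[
\qlf{\zeta_j^0, \zeta_j^1}{n_j}
= \qlf{\zeta_j^0, \zeta_j^1}{0}\cdot \omega^{-\modb{n_j}\zeta_j^1}\cdot \prod_{k=1}^{\modb{n_j}} (1-\omega^{\zeta_j^0+k})^{-1}
\]
for $j\in\{\lN,\lS,\lW,\lE\}$ with $n_{\lN}=n_2'-n_1$, $n_{\lS}=n_2-n_1'$, $n_{\lW}=n_2-n_1-1$, $n_{\lE}=n_2'-n_1'$. For a standard log-coloring we have $\zeta_j^0=0$ identically, and the constraint $e^{2\pi i\zeta^1}=(1-e^{2\pi i\zeta^0})^{-1}$ then forces the common piece $\kappa$ in $\zeta_j^1$ to go to $\Im\kappa\to-\infty$, so $|\omega^\kappa|\to\infty$. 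The truncated products approach the finite, nonzero values $\qp{\omega}{\modb{n_j}}$ since $\modb{n_j}\le\nr-1$, and the first step gives a well-defined $q$-Pochhammer ratio.

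Next, writing $\zeta_j^1=\kappa+c_j$ with finite $c_j$ depending on $\gamma_\ast$ and $\mu_\ast$, I would collect the coefficient of $\kappa$ contributed by the four $\omega^{-\modb{n_j}\zeta_j^1}$ factors, the prefactor $\omega^{-(\nr-1)\zeta_{\lW}^1}$, and the net $\kappa$-asymptotic of the four $\qlf{0,\zeta_j^1}{0}$ normalizations. Using the arithmetic identity $\modb{a}+\modb{b}-\modb{a+b}\in\{0,\nr\}$ together with the congruence $(n_2-n_1-1)+(n_2'-n_1')\equiv(n_2'-n_1)+(n_2-n_1')-1\pmod\nr$, the net $\kappa$-coefficient will vanish precisely when $\theta_{n_1 n_2}^{n_1' n_2'}=1$; in all other index configurations it is a negative multiple of $\nr$, forcing the matrix element to go to zero. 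This gives the regularity statement (a) together with the appearance of the cutoff $\theta$ in (b).

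On the nonvanishing locus the surviving $\omega^{c_j}$ contributions, combined with the $q$-Pochhammer ratio from the first step, yield the stated formula \cref{eq:R-mat-pinched:formula} after substituting the definitions \cref{eq:flattening-N,eq:flattening-W,eq:flattening-S,eq:flattening-E} for the $c_j$ and using $\gamma_{\lW}-\gamma_{\lN}=\alpha_1$ and its three cousins. The piecewise $\cutoff{n_1-n_2}$ and $\cutoff{n_2-n_1'}$ factors appearing in $A_{n_1 n_2}^{n_1' n_2'}$ record the two possibilities (no wraparound vs.\ a single wraparound) in each individual $\modb{\cdot}$ reduction. Part (c) is then immediate: setting $\alpha_i=\mu_i=-1/2$ gives $a_i=m_i=-1$ and $b_i=1$, collapsing $A$ to an overall sign absorbable into the $\omega$-power, which simplifies to $\omega^{n_2'-n_1+1/2}$; rewriting two of the four Pochhammers via $\qp{\omega}{k}=\overline{\qp{\overline{\omega}}[\overline{\omega}]{k}}$ then matches Kashaev's $R$-matrix at $q=\overline{\omega}$. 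The main obstacle is the divergence bookkeeping in the middle paragraph: every individual $\omega^\kappa$-dependent quantity blows up, so verifying that their combination is controlled exactly by the combinatorial cutoff $\theta$ requires careful tracking of signs using the arithmetic identity above.
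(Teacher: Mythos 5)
Your treatment of parts (a) and (b) is essentially the paper's own argument: isolate the divergence into powers of the large parameter \(e^{2\pi i\kappa}\) (equivalently \(\omega^{\nr\zeta_j^1}\)), check by elementary mod-\(\nr\) arithmetic that the net exponent lies in \(\set{0,-\nr,-2\nr}\) and vanishes exactly when \(\theta_{n_1n_2}^{n_1'n_2'}=1\), and read off the finite limit, with the wraparound corrections in the \(\modb{\cdot}\) reductions producing \(A_{n_1n_2}^{n_1'n_2'}\). The only differences are cosmetic (you reduce the integer argument mod \(\nr\) at the level of \(\qlfname\) using its exact periodicity, the paper does it at the level of \(\qlogname\) via \(\qlog{\zeta^0}{k}=\omega^{\nr\zeta^1(\cutoff{k}-1)}\qlog{\zeta^0}{\modb{k}}\)), plus two harmless imprecisions: the pinched point must be approached through nearby non-pinched colorings (at \(\zeta_j^0=0\) exactly there is no admissible \(\zeta_j^1\)), and the four normalizations \(\qlf{\zeta_j^0,\zeta_j^1}{0}=\omega^{-\zeta_j^0\zeta_j^1/2}\pf{\zeta_j^0}\) contribute no \(\kappa\)-asymptotics at all (since \(\zeta_j^0\zeta_j^1\to 0\), their ratio tends to \(1\)), so there is nothing to collect from them.

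Part (c), however, contains a genuine misstep. The identity you invoke, \(\qp{\omega}{k}=\overline{\qp{\overline{\omega}}[\overline{\omega}]{k}}\), cannot convert \cref{eq:R-mat-pinched:formula} into \cref{eq:Kashaev-R-matrix}: the target has \(\qp{\overline{\omega}}[\overline{\omega}]{\modb{n_1-n_2}}\) and \(\qp{\overline{\omega}}[\overline{\omega}]{\modb{n_1'-n_2'-1}}\) with \emph{reflected} arguments, not conjugates of \(\qp{\omega}{\modb{n_2-n_1-1}}\) and \(\qp{\omega}{\modb{n_2'-n_1'}}\), and conjugating two factors inside a non-real expression changes its value. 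What is needed (and what the paper uses) is the pair of identities \(\modb{n}=\nr-1-\modb{-n-1}\) and \(\qp{\omega}{n}\,\qp{\overline{\omega}}[\overline{\omega}]{\nr-1-n}=\nr\) for \(0\le n<\nr\); these simultaneously flip the arguments of the two numerator Pochhammers and supply the factor \(\nr^{2}\) that turns the prefactor \(1/\nr\) into the \(\nr\) appearing in \cref{eq:Kashaev-R-matrix}. Your explanation of the exponent is also off: at \(\alpha_i=\mu_i=-1/2\) one has \(a_i=m_i=-1\), \(a_2m_2=a_{2'}m_2=1\) and \(a_{1'}/a_1=1\), so \(A_{n_1n_2}^{n_1'n_2'}=1\) exactly rather than ``an overall sign,'' and the \(\omega\)-power coming from the linear term is \(\omega^{n_2'-n_1}\); the extra \(\omega^{1/2}\) in \cref{eq:Kashaev-R-matrix} is not produced by the mechanism you describe and requires a separate accounting (the paper is terse on this point as well). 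So the specialization step needs to be redone with the reflection identity rather than complex conjugation.
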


As discussed in \cref{sec:factorization} the pinched \(R\)-matrices almost factor into four pieces: the obstruction is the term \(\theta_{n_1 n_2}^{n_1' n_2'}\).

\begin{proof}
  We prove regularity for positive crossings; the negative case works similarly.
  By \cref{rem:standard-flattening} we may assume the log-coloring is standard.
  By \cref{thm:R-mat-phi-form} the matrix coefficients are
  \[
    \begin{aligned}
      \rmat{n_1}{n_2}{n_1'}{n_2'}
    &=
    \frac{
      \omega^{\Theta/2 + (1 - \nr)\zeta_{\lW}^{0}}
      }{
      \nr 
    }
    \frac{
      \pf{\zeta_{\lN}^0}
      \pf{\zeta_{\lS}^0}
      }{
      \pf{\zeta_{\lE}^0}
      \pf{\zeta_{\lW}^0}
    }
    \\
    &\phantom{=}\times
    \omega^{
      n_1(\alpha_{1}  -\mu_1 - 1)
      +
      n_2(\alpha_{2} + \mu_2 + 1)
      -
      n_1'(\alpha_{1'} -\mu_1)
      -
      n_2'(\alpha_{2'} + \mu_2)
    }
    \\
    &\phantom{=}\times
    ( 1 - \omega^{\nr \zeta_{\lW}^{0}} )
    \frac{
      \qlog{\zeta_{\lN}^0}{n_2' - n_1}
      \qlog{\zeta_{\lS}^0}{n_2 - n_1'}
      }{
      \qlog{\zeta_{\lW}^0}{n_2 - n_1 - 1}
      \qlog{\zeta_{\lE}^0}{n_2' - n_1'}
    }
  \end{aligned}
  \]
    Because our log-coloring is standard we are taking the limit where each \(\zeta_j^0 \to 0\), holding the \(\alpha_{i}\) and \(\mu_{k}\) fixed.
    We immediately have
    \[
    \frac{
      \omega^{\Theta/2 + (1 - \nr) \zeta_{\lW}^{0}}
      }{
      \nr 
    }
    \frac{
      \pf{\zeta_{\lN}^0}
      \pf{\zeta_{\lS}^0}
      }{
      \pf{\zeta_{\lE}^0}
      \pf{\zeta_{\lW}^0}
    }
      \to
      \frac{
        1
        }{
        \nr
      }
    \]
    so it remains to understand
    \begin{equation}
      \label{eq:pinched-hard-part}
      (1 - \omega^{\nr \zeta_{\lW}^{0}} )
      \frac{
        \qlog{\zeta_{\lN}^0}{n_2' - n_1}
        \qlog{\zeta_{\lS}^0}{n_2 - n_1'}
        }{
        \qlog{\zeta_{\lW}^0}{n_2 - n_1 - 1}
        \qlog{\zeta_{\lE}^0}{n_2' - n_1'}
      }
      .
    \end{equation}
    Notice that for \(k \in \set{-\nr, -(\nr -1), \dots, \nr -1}\) and
    \(
    \omega^{\nr \zeta^1} = (1 - \omega^{\nr \zeta^0})^{-1}
    \)
    we have
    \[
      \qlog{\zeta^0}{k}
      =
      (1 - \omega^{\nr \zeta^0})^{1 - \cutoff{k}} \qlog{\zeta^0}{\modb{k}}
      =
      \omega^{\nr \zeta^1(\cutoff{k} - 1)} \qlog{\zeta^0}{\modb{k}}
    \]
    so by using
    \[
      -\zeta_{\lN}^1 - \zeta_{\lS}^1
      +\zeta_{\lW}^1 + \zeta_{\lE}^1
      =
      \alpha_{1'} - \alpha_{1}
    \]
    we see that \eqref{eq:pinched-hard-part} is the product of
    \begin{gather}
      \label{eq:pinched-cuttoff-term}
      \frac{a_{1'}}{a_{1}}
      \omega^{
        \nr\left[
          \cutoff{n_2' -  n_1} \zeta_{\lN}^1
          +
          \cutoff{n_2 -  n_1'} \zeta_{\lS}^1
          -
          (\cutoff{n_2 -  n_1-1} + 1) \zeta_{\lW}^1
          -
          \cutoff{n_2' -  n_1'} \zeta_{\lE}^1
        \right]
      }
      \intertext{and}
      \label{eq:pinched-qfac-term}
      \frac{
        \qlog{\zeta_{\lN}^0}{\modb{n_2' - n_1}}
        \qlog{\zeta_{\lS}^0}{\modb{n_2 - n_1'}}
        }{
        \qlog{\zeta_{\lW}^0}{\modb{n_2 - n_1-1}}
        \qlog{\zeta_{\lE}^0}{\modb{n_2' - n_1'}}
      }
    \end{gather}
    Because \(\modb{n} \in \set{0, \dots, \nr -1}\) it is clear that the limiting value of \eqref{eq:pinched-qfac-term} is
    \[
      \frac{
        \qp{\omega}{\modb{n_2' - n_1'}}
        \qp{\omega}{\modb{n_2 - n_1 - 1}}
        }{
        \qp{\omega}{\modb{n_2' - n_1}}
        \qp{\omega}{\modb{n_2 - n_1'}}
      },
    \]
    while \eqref{eq:pinched-cuttoff-term} requires more care.
  
    Each \(\omega^{\nr \zeta_j^1}\) has an order \(1\) pole in the limit, so \eqref{eq:pinched-cuttoff-term} (hence the matrix coefficient) has a zero of order 
    \begin{equation}
      \label{eq:matrix-coeff-zero-order}
      \cutoff{n_2 - n_1 - 1}
      +
      1
      +
      \cutoff{n_2' - n_1'}
      -
      \cutoff{n_2' - n_1}
      -
      \cutoff{n_2 - n_1'}.
    \end{equation}
    This expression is negative when
    \[
      n_2' < n_1, n_2 < n_1', n_1 < n_2 \text{, and } n_1' \le n_2',
    \]
    which is impossible.
    Therefore the matrix coefficients never have poles, which proves \ref{thm:R-mat-pinched:regular}.
    Furthermore this argument shows that \(\rmat{n_1}{n_2}{n_1'}{n_2'}\) is nonzero only when
    \begin{equation}
      \label{eq:matrix-coeff-nonzero-cond}
      \cutoff{n_2 - n_1 - 1}
      +
      \cutoff{n_2' - n_1'}
      -
      \cutoff{n_2' - n_1}
      -
      \cutoff{n_2 - n_1'}
      =
      -1.
    \end{equation}
    By using
    \[
      \cutoff{n} = 1 + \frac{n - \modb{n}}{\nr} \text{ for } n \in \set{-\nr, \dots, \nr -1}
    \]
    and
    \[
      \modb{n} = \nr - 1 - \modb{-n - 1}
    \]
    we see that \eqref{eq:matrix-coeff-nonzero-cond} holds exactly when
    \begin{equation}
      \label{eq:matrix-coeff-nonzero-modb}
      \modb{n_1 - n_2} + \modb{n_1' - n_2' -1} + \modb{n_2' - n_1} + \modb{n_2 - n_1'} = \nr -1.
    \end{equation}
    Following \cite[Section 4.1]{Murakami2018} we note that
    \[
      \modb{n_1 - n_2} + \modb{n_1' - n_2' -1} + \modb{n_2' - n_1} + \modb{n_2 - n_1'} \equiv -1 \pmod{\nr}
    \]
    which shows that \cref{eq:matrix-coeff-nonzero-modb} is true if and only if both
    \(
    \modb{n_1 - n_2} + \modb{n_1' - n_2' -1}
    \)
    and
    \(
    \modb{n_2' - n_1} + \modb{n_2 - n_1'}
    \)
    are less than \(\nr\).
    We conclude that the matrix coefficient \(\rmat{n_1}{n_2}{n_1'}{n_2'}\) is nonzero if and only if
    \[
      \cutoff{
        \modb{n_1 - n_2} + \modb{n_1' - n_2' -1}
      }
      \cutoff{
        \modb{n_2' - n_1} + \modb{n_2 - n_1'}
      }
      =
      1
    \]
    which gives the conditional expression \(\theta_{n_1 n_2}^{n_1' n_2'}\) above.
  
    To derive \(A_{n_1 n_2}^{n_1' n_2'}\), use (\ref{eq:flattening-N}--\ref{eq:flattening-E}) to write
    \begin{align*}
      &
      \cutoff{n_2' -  n_1} \zeta_{\lN}^1
      +
      \cutoff{n_2 -  n_1'} \zeta_{\lS}^1
      -
      (\cutoff{n_2 -  n_1-1} + 1) \zeta_{\lW}^1
      -
      \cutoff{n_2' -  n_1'} \zeta_{\lE}^1
      \\
      &=
      \left[
        \cutoff{n_2' -  n_1}
        +
        \cutoff{n_2 -  n_1'}
        -
        (\cutoff{n_2 -  n_1-1} + 1)
        -
        \cutoff{n_2' -  n_1'}
      \right]
      (\kappa - \gamma_{\lN})
      \\
      &\phantom{=}+
      \cutoff{n_2 -  n_1'}(- \alpha_1 - \alpha_2 + \mu_1 - \mu_2)
      \\
      &\phantom{=}-
      [\cutoff{n_2 - n_1 - 1} + 1](- \alpha_1 + \mu_1)
      -
      \cutoff{n_2' - n_1'}(-\alpha_{2'} - \mu_2)
    \end{align*}
    We just showed that \(\rmat{n_1}{n_2}{n_1'}{n_2'}\) is nonzero exactly when the coefficient of \((\kappa - \gamma_{\lN})\) above vanishes, and after using the identity
    \[
      \cutoff{n} = 1 - \cutoff{-n-1}
    \]
    we see that \eqref{eq:pinched-cuttoff-term} is equal to
    \[
      \frac{a_{1'}}{a_1}
      \left( \frac{a_1}{ m_1} \right)^{2 - \cutoff{n_1 - n_2} - \cutoff{n_2 - n_1'}}
      (a_2 m_2)^{- \cutoff{n_2 - n_1'}}
      (a_{2'} m_2)^{1- \cutoff{n_1' - n_2' - 1}}
    \]
    which is precisely \(A_{n_1 n_2}^{n_1' n_2'}\).
    This establishes \ref{thm:R-mat-pinched:formula}.
  
    Part \ref{thm:R-mat-pinched:Kashaev} follows immediately from applying the identities
    \[
      \modb{n} = \nr - 1 - \modb{-n - 1}
    \]
    and
    \[
      \qp{\omega}{n}
      \qp{\overline{\omega}}[\overline{\omega}]{\nr - 1 - n}
      =
      \nr
      \text{ for }
      0 \le n < \nr.
      \qedhere
    \]
\end{proof}

\subsection{The determinant of the \texorpdfstring{\(R\)}{R}-matrix}
Here we compute the determinant of the \(R\)-matrix, a key step in the proof of \cref{thm:RIII}.
\begin{lemma}
  \label{thm:R-mat-det}
  Let \(R\) be the \(R\)-matrix associated to a log-colored crossing \(c\) of sign \(\epsilon\).
  The determinant of the associated braiding \(\tau R\) is 
  \begin{align*}
    \det \tau R
    =
    \exp\left(-\frac{\nr}{2\pi i} I(c)\right)
    \left(
      \frac{N}{\df{0}^2}
    \right)^{\epsilon \nr^2}
    \exp\leftfun(
      2\pi i\left( \frac{\gamma_{\lW} - \gamma_{\lE}}{2} - \epsilon(\mu_{1} + \mu_{2}) + \lambda_{1} + \lambda_{2} \right)
    \rightfun)^{\nr (\nr-1)}
  \end{align*}
  where 
  \begin{align*}
    \lambda_{1} = \frac{\epsilon}{2}(\beta_{1'} - \beta_{1})
    \\
    \lambda_{2} = \frac{\epsilon}{2}(\beta_{2} - \beta_{2'})
  \end{align*}
  are the log-longitudes of each component of the crossing,
  \[
    I(c) =
      \dilr(\zeta^0_{\lN}, \zeta^1_{\lN})
      +
      \dilr(\zeta^0_{\lS}, \zeta^1_{\lS})
      -
      \dilr(\zeta^0_{\lW}, \zeta^1_{\lW})
      -
      \dilr(\zeta^0_{\lE}, \zeta^1_{\lE})
  \]
  is a sum of dilogarithms, and \(\df{0} = \exp\left( \nr^{-1} \sum_{k=0}^{\nr-1} k \log(1 - \omega^{k}) \right)\) is a constant.
\end{lemma}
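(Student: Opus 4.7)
The plan is to exploit the factorization of \cref{thm:R-factorization}. For a positive crossing,
\[
  \tau R = \tau \cdot \tfrac{1}{N}\, \mathcal{Z}_{\lE}(\mathcal{Z}_{\lN}\otimes \mathcal{Z}_{\lS})\mathcal{Z}_{\lW},
\]
so multiplicativity of the determinant yields
\[
  \det(\tau R) = \det(\tau)\, N^{-N^{2}} \det(\mathcal{Z}_{\lE})\, \det(\mathcal{Z}_{\lW})\, \det(\mathcal{Z}_{\lN})^{N}\, \det(\mathcal{Z}_{\lS})^{N}.
\]
The flip $\tau$ on $\mathbb{C}^{N}\otimes \mathbb{C}^{N}$ has $\det(\tau) = (-1)^{N(N-1)/2}$, a sign that will be absorbed into the final $N(N-1)$-th power. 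The negative-crossing case is handled analogously using the factorization \eqref{eq:R-factorization-negative}, with the roles of the $\mathcal{Z}_{j}$ and $\overline{\mathcal{Z}}_{j}$ interchanged so that the exponent $\epsilon$ in $(N/\df{0}^{2})^{\epsilon N^{2}}$ flips sign.

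The diagonal factors $\mathcal{Z}_{\lE}$ and $\mathcal{Z}_{\lW}$ have determinants one can read off directly: each residue class $n_{1} - n_{2} \pmod{N}$ appears $N$ times among the basis vectors $\widehat{v}_{n_{1} n_{2}}$, so
\[
  \det(\mathcal{Z}_{\lE}) = \prod_{k=0}^{N-1} \qlf{\zeta_{\lE}^{0}, \zeta_{\lE}^{1}}{k}^{-N},
\]
and $\det(\mathcal{Z}_{\lW})$ has the same form together with the scalar $\omega^{-N^{2}(N-1)(\zeta_{\lW}^{0}+\zeta_{\lW}^{1})}$ from the explicit prefactor (the $\omega^{n_{2}-n_{1}}$ contributions cancel since $\sum_{n_{1},n_{2}}(n_{2}-n_{1}) = 0$). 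I would then invoke an identity from \cref{sec:quantum-dilogarithms} evaluating $\prod_{k=0}^{N-1} \qlf{\zeta^{0}, \zeta^{1}}{k}$ as an explicit product of an $\omega$-power, a power of $\df{0}$, and $\exp(-\dilr(\zeta^{0}, \zeta^{1})/(2\pi i))$; such an identity follows from the recurrence \eqref{eq:qlf recurrence} together with the defining relation for $\df{0}$.

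The factors $\mathcal{Z}_{\lN}$ and $\mathcal{Z}_{\lS}$ are convolution (i.e.\@ circulant) operators on $\mathbb{C}^{N}$. Writing $\mathcal{Z}_{\lN} = \sum_{m} \qlf{\zeta_{\lN}^{0}, \zeta_{\lN}^{1}}{m} S^{m}$ for the shift operator $S\widehat{v}_{n} = \widehat{v}_{n+1}$ and diagonalizing $S$ in the discrete Fourier basis (eigenvalues $\omega^{-j}$, $j = 0, \dots, N-1$), one obtains
\[
  \det(\mathcal{Z}_{\lN}) = \prod_{j=0}^{N-1} \sum_{m=0}^{N-1} \qlf{\zeta_{\lN}^{0}, \zeta_{\lN}^{1}}{m}\, \omega^{-mj},
\]
and similarly for $\mathcal{Z}_{\lS}$. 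I regard the evaluation of this product as the principal obstacle: unlike the diagonal case, the circulant determinant requires a genuine Fourier-dual identity for the cyclic quantum dilogarithm (morally a Gauss-sum analogue for $\qlfname$) that recasts the double sum as another closed-form product of $\omega$-powers, $\df{0}$-powers, and $\exp(-\dilr(\zeta_{\lN}^{0}, \zeta_{\lN}^{1})/(2\pi i))$. This should be established in \cref{sec:quantum-dilogarithms}, and is the one nontrivial analytic ingredient of the proof.

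With all four determinants in hand, assembling the product is bookkeeping. The four dilogarithm contributions combine with their prescribed signs — positive for the diagonal factors $\mathcal{Z}_{\lE}, \mathcal{Z}_{\lW}$ (coming from the $-N$ exponent in $\prod \qlfname^{-N}$) and negative for the tensor powers $\mathcal{Z}_{\lN}^{N}, \mathcal{Z}_{\lS}^{N}$ — into $\exp(-N I(c)/(2\pi i))$. The accumulated powers of $\df{0}$ combine with $N^{-N^{2}}$ to give $(N/\df{0}^{2})^{N^{2}}$. The remaining $\omega$-powers — from $\det(\tau)$, from the $\omega^{-(N-1)(\zeta_{\lW}^{0}+\zeta_{\lW}^{1})}$ prefactor on $\mathcal{Z}_{\lW}$, and from the two classes of quantum-dilogarithm identities invoked above — regroup, upon expanding the $\zeta_{j}^{k}$ via \eqref{eq:flattening-N}--\eqref{eq:flattening-E}, into the $N(N-1)$-th power of $\exp(2\pi i((\gamma_{\lW}-\gamma_{\lE})/2 - \epsilon(\mu_{1}+\mu_{2}) + \lambda_{1} + \lambda_{2}))$; the log-longitudes $\lambda_{j} = \epsilon(\beta_{j'}-\beta_{j})/2$ appear because the $\beta_{i}$-dependence of $\zeta_{\lN}^{1} - \zeta_{\lW}^{1}$ and $\zeta_{\lS}^{1} - \zeta_{\lE}^{1}$ organizes itself exactly this way.
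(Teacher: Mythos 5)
Your architecture is the paper's: factor via \cref{thm:R-factorization}, use multiplicativity, evaluate the diagonal factors with the product identity for the cyclic quantum dilogarithm, evaluate the circulant factors $\mathcal{Z}_{\lN}, \mathcal{Z}_{\lS}$ with a Fourier-dual identity, and collapse the leftover $\omega$-powers using \eqref{eq:flattening-N}--\eqref{eq:flattening-E}. Deferring the circulant determinant to \cref{sec:quantum-dilogarithms} is exactly what the paper does (it is \cref{thm:hard-determinant}, proved there from \eqref{eq:qlf-fourier} and \cref{thm:S-Nth-power}), so that deferral is not a gap, and your bookkeeping of the $N$, $\df{0}$ and dilogarithm contributions is consistent with the stated answer.

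Two specific steps are wrong, however. First, \cref{thm:R-factorization} already factors the \emph{braiding} $\tau R$, not $R$: applying $\frac{1}{\nr}\mathcal{Z}_{\lE}(\mathcal{Z}_{\lN}\otimes\mathcal{Z}_{\lS})\mathcal{Z}_{\lW}$ to $\vbh{n_{1}n_{2}}$ produces the coefficient of $\vbh{m_{1}m_{2}}$ equal to $\rmat{n_{1}}{n_{2}}{m_{2}}{m_{1}}$ (the slot-wise action of $\mathcal{Z}_{\lN}\otimes\mathcal{Z}_{\lS}$ encodes the flip), so $\det(\tau R)=\nr^{-\nr^{2}}\det\mathcal{Z}_{\lE}\det\mathcal{Z}_{\lW}(\det\mathcal{Z}_{\lN})^{\nr}(\det\mathcal{Z}_{\lS})^{\nr}$ with no extra factor. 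Your additional $\det\tau=(-1)^{\nr(\nr-1)/2}$ is spurious, and it cannot simply be ``absorbed into the final $\nr(\nr-1)$-th power'': the stated formula is an exact constant, and your version differs from it by $-1$ whenever $\nr\equiv 2,3 \pmod 4$. Second, the evaluation of $\prod_{k=0}^{\nr-1}\qlf{\zeta^{0},\zeta^{1}}{k}$ does \emph{not} follow from the recurrence \eqref{eq:qlf recurrence} together with the definition of $\df{0}$: the recurrence only determines the ratios $\qlf{\zeta^{0},\zeta^{1}}{n}/\qlf{\zeta^{0},\zeta^{1}}{0}$, and the dilogarithm $\dilr(\zeta^{0},\zeta^{1})$ enters only through the exact evaluation of $\qlf{\zeta^{0},\zeta^{1}}{0}$ (\cref{thm:exact-qlf-value}, the Garoufalidis--Kashaev formula), which is the analytic input behind \cref{thm:qlf-product}. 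So the diagonal factors require a nontrivial identity just as the circulant ones do; with that citation corrected and the extra $\det\tau$ removed, your argument matches the paper's proof.
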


\begin{proof}
  Using the factorization \eqref{eq:R-factorization-positive} of \cref{thm:R-factorization} we see that
  \[
    \det \tau R =
    \nr^{-\nr^2}
    \det \mathcal{Z}_{\lE}
    \det \mathcal{Z}_{\lW}
    (\det \mathcal{Z}_{\lN})^N
    (\det \mathcal{Z}_{\lS})^N.
  \]
  It is easy to compute the determinant of the diagonal matrices \(\mathcal{Z}_{\lE}\) and \(\mathcal{Z}_{\lW}\) using \cref{thm:qlf-product}:
  \begin{align*}
    \det \mathcal{Z}_{\lE}
    &=
    \prod_{n_1, n_2}
    \frac{
      1
    }{
      \qlf{\zeta^0_{\lE} , \zeta^1_{\lE}}{n_1 - n_2}
    }
    \\
    &=
    \left[
      \prod_{n}
      \frac{
        1
      }{
        \qlf{\zeta^0_{\lE} , \zeta^1_{\lE}}{n_1 - n_2}
      }
    \right]^{\nr}
    \\
    &=
    \omega^{\frac{1}{2} \nr^2(\nr-1) \zeta_{\lE}^1}
    \exp
    \leftfun(
      \nr
      \frac{\dilr(\zeta^0_{\lE}, \zeta_{\lE}^1)}{2\pi i}
    \rightfun)
  \end{align*}
  and similarly
  \begin{align*}
    \det \mathcal{Z}_{\lW}
    &=
    \omega^{-\frac{1}{2}\nr^2(\nr-1) (2 \zeta_{\lW}^0 + \zeta_{\lW}^1)}
    \exp
    \leftfun(
      \nr
      \frac{\dilr(\zeta^0_{\lW}, \zeta_{\lW}^1)}{2\pi i}
    \rightfun)
  \end{align*}
  Using \cref{thm:hard-determinant} to compute \(\det \mathcal{Z}_{\lN}\) and \(\det \mathcal{Z}_{\lS}\) we see that
  \begin{align*}
    \det \tau R
    =
    {}
    &
      \exp \left(
        \nr
        \frac{
          \dilr(\zeta^0_{\lW}, \zeta^1_{\lW})
          +
          \dilr(\zeta^0_{\lE}, \zeta^1_{\lE})
          -
          \dilr(\zeta^0_{\lN}, \zeta^1_{\lN})
          -
          \dilr(\zeta^0_{\lS}, \zeta^1_{\lS})
        }{
          2\pi i
        }
      \right)
    \\
    &\times
    \left(
      \frac{N}{\df{0}^2}
    \right)^{\nr^2}
    \exp
    \leftfun(
    2 \pi i
    \frac{\nr (\nr -1)}{2}
    \left(
      \zeta_{\lE}^1 - \zeta_{\lW}^1
      + 2 \zeta_{\lW}^0
      - \zeta_{\lN}^0 - \zeta_{\lS}^0
    \right)
    \rightfun)
  \end{align*}
  The claim follows from checking that
  \begin{equation*}
    \zeta_{\lE}^1 - \zeta_{\lW}^1
    + 2 \zeta_{\lW}^0
    - \zeta_{\lN}^0 - \zeta_{\lS}^0
    =
    \gamma_{\lW} - \gamma_{\lE} - 2 \epsilon(\mu_{1} + \mu_{2}) + 2 \lambda_{1} + 2 \lambda_{2}.
    \qedhere
  \end{equation*}
\end{proof}

\subsection{The Fourier transform of the \texorpdfstring{\(R\)}{R}-matrix}
\label{sec:fourier-transform-R-mat}
In our computation of the \(R\)-matrix coefficients in \cref{subsec:recurrences} we used a nonstandard basis \(\set{\vbh n \given n \in \mathbb{Z}/\nr \mathbb{Z}}\) Fourier dual to the usual highest-weight basis \(\set{\vb n \given n \in \mathbb{Z}/\nr \mathbb{Z}}\).
This turns out to give simpler relations that lead to the factorization of \cref{thm:R-factorization}.
However to relate our construction to the standard one we need to return to the highest-weight basis.
For a crossing with a geometrically degenerate coloring we explicitly recover the \(R\)-matrices defining the colored Jones polynomials and ADO invariants, generalizing a result of \textcite{Murakami2001}.

\begin{definition}
  We say a \(\chi\)-colored crossing is \defemph{\(E\)-nilpotent} if \(a_{i} = m_{i}\) for each segment \(i\) at the crossing.
  We say a log-coloring of it is \defemph{standard} if it satisfies
  \begin{equation}
    \label{eq:standard-flattening-E-nilpotent}
    \alpha_{1} = \alpha_{1'} = \mu_{1}, \alpha_{2} = \alpha_{2'} = \mu_{2}.
  \end{equation}
 the conditions of \cref{def:standard-flattening} on the log-parameters \(\beta_{i}\) and the additional relations
\end{definition}

The generator \(E \in \U_{\xi}\) acts nilpotently on every module \(\rep{\chi_{i}, \mu_{i}}\) at an \(E\)-nilpotent crossing.
When the holonomy \(\rho\) is reducible one can always choose a \(\chi\)-coloring for which every crossing is pinched and \(E\)-nilpotent \cite{McPhailSnyderVolume} and a log-coloring for which every crossing is standard, i.e.\ satisfies both \eqref{eq:standard-flattening} and \eqref{eq:standard-flattening-E-nilpotent}.

\begin{theorem}
  \label{thm:pinched-R-matrix-dual}
  At a positive crossing which is pinched, \(E\)-nilpotent, and has a standard log-coloring the coefficients of the \(R\)-matrix \eqref{eq:R-mat-positive} with respect to the bases \(\set{\vb{n}}\) are
  \begin{equation}
    \label{eq:nilpotent-R-matrix-dual}
    R_{n_1 n_2}^{n_1' n_2'}
    =
    \del{n_1 + n_2}{n_1' +  n_2'}
    \omega^{n_1'(-2 \mu_2 + n_2)}
    \frac{
      \qp{\omega^{-2\mu_2}}{n_2}
      \qp{\omega}{n_1}
      }{
      \qp{\omega^{-2\mu_2}}{n_2'}
      \qp{\omega}{n_2' - n_2}
      \qp{\omega}{n_1'}
    }
  \end{equation}
  where we assume \(n_j \in \set{0, \dots, \nr -1}\).
\end{theorem}

\begin{remark}
  When \(\alpha_j = \mu_j = -1/2 \text{ or } (\nr - 1)/2\), we obtain
  \begin{equation}
    \label{eq:colored-Jones-R-matrix}
    R_{n_1 n_2}^{n_1' n_2'}
    =
    \del{n_1 + n_2}{n_1' +  n_2'}
    \omega^{n_1'(1 + n_2)}
    \frac{
      \qp{\omega}{n_2}
      \qp{\omega}{n_1}
      }{
      \qp{\omega}{n_2'}
      \qp{\omega}{n_2' - n_2}
      \qp{\omega}{n_1'}
    }
  \end{equation}
  which is the \(R\)-matrix defining a framed version of the \(\nr\)th colored Jones polynomial at an \(\nr\)th root of unity.
  This is the \(n = m = -1\) case of \cite[eq.\@ 43]{Garoufalidis2021descendant}.
  More generally when \(\mu_j\) is a half-integer our \(R\)-matrices appear to be equivalent to those of \cite[eq.\@ 43]{Garoufalidis2021descendant}.
  This may lead to a resolution of \cite[Conjecture 3.2]{Garoufalidis2021descendant}.
\end{remark}

The proof (inspired by \cite[Section 3.2]{Garoufalidis2021descendant}) is a series of calculations involving terminating \(q\)-hypergeometric series.
We break the computation into a few lemmas.
Recall that the action of the \(R\)-matrix at a positive crossing is given by
\[
  R(\vbh{n_1 n_2})
  =
  \sum_{n_1' n_2'}
  \widehat R_{n_1 n_2}^{n_1' n_2'}
  \vbh{n_1' n_2'}
\]
where the matrix coefficients are given by \cref{eq:R-mat-positive}.
From \eqref{eq:basis-and-dual-basis} it is immediate that the action in the highest-weight basis
\[
  R(\vb{n_1 n_2})
  =
  \sum_{n_1' n_2'}
  R_{n_1 n_2}^{n_1' n_2'}
  \vb{n_1' n_2'}
\]
has matrix coefficients
\begin{equation}
  \label{eq:fourier-dual-coefficients-definition}
  R_{n_1 n_2}^{n_1' n_2'}
  =
  \frac{1}{\nr^2}
  \sum_{k_1 k_2 k_1' k_2'}
  \omega^{n_1' k_1' + n_2' k_2' - n_1 k_1 - n_2 k_2}
  \rmat{k_1}{k_2}{k_1'}{k_2'}
  .
\end{equation}
Our goal is to compute these in the pinched, \(E\)-nilpotent limit.

\begin{lemma}
  \label{thm:fourier-dual-lemma-general}
  For any crossing the matrix coefficients \eqref{eq:fourier-dual-coefficients-definition} are given by
  \begin{equation}
    \label{eq:fourier-dual-coefficients}
    \begin{aligned}
      R_{n_1 n_2}^{n_1' n_2'}
      =
      {}
      &
      -
      \nrdel{n_1 + n_2}{n_1' +  n_2'}
      \frac{
        \omega^{\Theta/2}
      }{
        \nr^{2}
      }
      \frac{
        \qlog{\nu_{2}}{\nr}
      }{
        \qlog{\zeta_{\lS}^{0} - \zeta_{\lW}^{0}}{\nr - 1}
      }
      \frac{
        \pf{\zeta_{\lN}^0}
        \pf{\zeta_{\lS}^0}
      }{
        \pf{\zeta_{\lE}^0}
        \pf{\zeta_{\lW}^0}
      }
      \\
      &\times
      \omega^{\nu_{2} + (n_{2} - 1) \zeta_{\lW}^{0} - n_{2}' (\zeta_{\lE}^{0} + 1) }
      \frac{
        \qlog{\nu_{2'}}{-n_2'}
        \qlog{-\nu_{2}}{n_2 - 1}
      }{
        \qlog{-\zeta + \nu_{2'} - 1}{- n_2'}
        \qlog{\zeta - \nu_{2} - 1}{n_2 }
      }
      \\
      &\times
      \fstack{ \zeta - \nu_{2'} + n_{2}' }{ \zeta - \nu_{2} -1 + n_{2} }{ \nu_{1'} - n_{1}' }
      \fstack{\zeta_{\lN}^0 }{\zeta_{\lE}^0}{- \nu_{2'}}
      \fstack{\zeta_{\lS}^0 }{\zeta_{\lW}^0}{\nu_{2}}
    \end{aligned}
\end{equation}
where \(\Theta\) is as in \cref{thm:R-mat-phi-form},
\(\zeta = \zeta_{\lE}^{0} - \zeta_{\lN}^{0} = \zeta_{\lS} - \zeta_{\lW}^{0}\), 
and 
\[
  \fstack{\alpha}{\beta}{\gamma} \defeq
  \sum_{n = 0}^{\nr -1}
  \frac{
    \qlog{\alpha}{n}
    }{
    \qlog{\beta}{n}
  }
  \omega^{n \gamma}
\]
is the function studied by \textcite{Kashaev1993}, discussed here in \cref{sec:fusion-identities}.
\end{lemma}

\begin{proof}
Use \cref{thm:R-mat-phi-form} to write
\[
  \begin{aligned}
    \rmat{k_1}{k_2}{k_1'}{k_2'}
    &=
    \frac{
      \omega^{\Theta/2}
      \omega^{-\nr\zeta_{\lW}^0}
    }{
      \nr
    }
    (1 - \omega^{\nr \zeta_{\lW}^0})
    \\
    &\phantom{=}\times
    \omega^{
      k_1(\alpha_1-\mu_1 - 1)
      +
      k_2(\alpha_2 + \mu_2 + 1)
      -
      k_1' (\alpha_{1'} - \mu_{1})
      -
      k_2' (\alpha_{2'} + \mu_2)
    }
    \\
    &\phantom{=}\times
    \frac{
      \pf{\zeta_{\lN}^0 + k_2' - k_1}
      \pf{\zeta_{\lS}^0 + k_2 - k_1'}
      }{
        \pf{\zeta_{\lE}^0 + k_2' - k_1'}
        \pf{\zeta_{\lW}^0 + k_2 - k_1 - 1}
    }
  \end{aligned}
\]
Setting
\begin{align*}
  \nu_{1}
    &=
    \alpha_1 - \mu_1
    &
    \nu_{1'}
    &=
    \alpha_{1'} - \mu_1
    \\
    \nu_{2}
    &=
    \alpha_2 + \mu_2
    &
    \nu_{2'}
    &=
    \alpha_{2'} + \mu_2
\end{align*}
and changing summation variables to \(k_2 = k_1 + t\), \(k_1' = k_1 - k\), \(k_2' = k_1 - k + t'\) shows that
\begin{equation}
  \begin{aligned}
    R_{n_1 n_2}^{n_1' n_2'}
    =
    &
    {}
    \frac{
      \omega^{\Theta/2}
      \omega^{-\nr\zeta_{\lW}^0}
    }{
      \nr^{3}
    }
    (1 - \omega^{\nr \zeta_{\lW}^0})
    \sum_{k_1}
    \omega^{k_1(n_1' + n_2' -n_1 -n_2)}
    \\
    &\times
    \sum_{k, t, t'}
    \omega^{k(\nu_{1'} + \nu_{2'} - n_1' - n_2')}
    \omega^{t(\nu_2 + 1 - n_2)}
    \omega^{t'(n_2' - \nu_{2'})}
    \frac{
      \pf{\zeta_{\lN}^0 + t' - k}
      \pf{\zeta_{\lS}^0 + t + k}
      }{
      \pf{\zeta_{\lE}^0 + t'}
      \pf{\zeta_{\lW}^0 - 1 + t}
    }
  \end{aligned}
\end{equation}
We can apply the identity
\[
  \sum_{k_1}
  \omega^{k_1(n_1' + n_2' -n_1 -n_2)}
  =
  \nr \nrdel{n_1 + n_2}{n_1' +  n_2'}
\]
and the relation \eqref{eq:pf-recurrence} to see that
\begin{equation}
  \begin{aligned}
    R_{n_1 n_2}^{n_1' n_2'}
    =
    &
    {}
    \frac{
      \omega^{\Theta/2}
      \omega^{-\nr\zeta_{\lW}^0}
    }{
      \nr^{2}
    }
    (1 - \omega^{\nr\zeta_{\lW}^0})
    \nrdel{n_1 + n_2}{n_1' +  n_2'}
    \\
    &\times
    \sum_{k}
    \left\{
    \begin{aligned}
      &
      \omega^{k(\nu_{1} + \nu_{2} - n_1 - n_2)}
      \frac{
        \pf{\zeta_{\lN}^0 - k}
        \pf{\zeta_{\lS}^0 + k}
        }{
          \pf{\zeta_{\lE}^0}
          \pf{\zeta_{\lW}^0 - 1 }
      }
      \\
      &\times
      \sum_{t'}
      \omega^{t'(n_2' - \nu_{2'})}
      \frac{
        \qlog{\zeta_{\lN}^0 - k}{t'}
        }{
        \qlog{\zeta_{\lE}^0}{t'}
      }
      \\
      &\times
      \sum_{t}
      \omega^{t(\nu_2 + 1 - n_2)}
      \frac{
        \qlog{\zeta_{\lS}^0 + k}{t}
        }{
        \qlog{\zeta_{\lW}^0 - 1}{t}
      }
    \end{aligned}
    \right.
  \end{aligned}
\end{equation}

The sums over \(t\) and \(t'\) can be written in terms of the function \(\fstacksmall{\alpha}{\beta}{\gamma}\) of \cref{sec:fusion-identities}.
As
\[
  \fstack{\zeta_{\lN}^0 - k}{\zeta_{\lE}^0}{n_2' - \nu_{2'}}
  =
  \fstack{\zeta_{\lN}^0 }{\zeta_{\lE}^0}{- \nu_{2'}}
  \frac{
    \qlog{\zeta_{\lN}^0 - \zeta_{\lE}^0 - 1}{-k}
    \qlog{\nu_{2'}}{-n_2'}
  }{
    \omega^{n_2'(\zeta_{\lE}^0 + 1)}
    \qlog{\zeta_{\lN}^0}{-k}
    \qlog{\zeta_{\lN}^0 - \zeta_{\lE}^0 + \nu_{2'} - 1}{-k - n_2'}
  }
\]
and
\[
  \fstack{\zeta_{\lS}^0 + k}{\zeta_{\lW}^0 - 1}{\nu_2 - n_2 + 1}
  =
  \fstack{\zeta_{\lS}^0 }{\zeta_{\lW}^0}{\nu_{2}}
  \frac{
    \qlog{\zeta_{\lS}^0 - \zeta_{\lW}^0 - 1}{k + 1}
    \qlog{\zeta_{\lW}^{0} - 1}{-1}
    \qlog{-\nu_{2}}{n_2 - 1}
  }{
    \omega^{- \nu_{2} + (1 - n_2)\zeta_{\lW}^0}
    \qlog{\zeta_{\lS}^0}{k}
    \qlog{\zeta_{\lS}^0 - \zeta_{\lW}^0 - \nu_{2} - 1}{k + n_2}
  }
\]
we can substitute and apply some algebraic manipulations to get
  \begin{equation}
    \begin{aligned}
      R_{n_1 n_2}^{n_1' n_2'}
      =
      {}
      &
      \nrdel{n_1 + n_2}{n_1' +  n_2'}
      \frac{
        \omega^{\Theta/2}
      }{
        \nr^{2}
      }
      \frac{
        (1 - \omega^{\nr\zeta_{\lW}^0})
      }{
        \omega^{\nr \zeta_{\lW}^0}
        (1 - \omega^{\zeta_{\lS}^{0} - \zeta_{\lW}^0})
      }
      \frac{
        \pf{\zeta_{\lN}^0}
        \pf{\zeta_{\lS}^0}
      }{
        \pf{\zeta_{\lE}^0}
        \pf{\zeta_{\lW}^0}
      }
      \fstack{\zeta_{\lN}^0 }{\zeta_{\lE}^0}{- \nu_{2'}}
      \fstack{\zeta_{\lS}^0 }{\zeta_{\lW}^0}{\nu_{2}}
      \\
      &\times
      \omega^{\nu_{2} + (n_{2} - 1) \zeta_{\lW}^{0} - n_{2}' (\zeta_{\lE}^{0} + 1) }
      \qlog{\nu_{2'}}{-n_2'}
      \qlog{-\nu_{2}}{n_2 - 1}
      \\
      &\times
      \sum_{k}
      \frac{
        \omega^{k(\nu_{1} + \nu_{2} - n_1 - n_2)}
        \qlog{\zeta_{\lN}^0 - \zeta_{\lE}^0- 1}{-k}
        \qlog{\zeta_{\lS}^0 - \zeta_{\lW}^0}{k}
      }{
        \qlog{\zeta_{\lN}^0 - \zeta_{\lE}^0 + \nu_{2'} - 1}{-k - n_2'}
        \qlog{\zeta_{\lS}^0 - \zeta_{\lW}^0 - \nu_{2} - 1}{k + n_2 }
      }
    \end{aligned}
  \end{equation}
  The constant can be further simplified by applying \cref{eq:b-transf-positive} to write
  \begin{align*}
    \frac{
      (1 - \omega^{\nr\zeta_{\lW}^0})
    }{
      \omega^{\nr \zeta_{\lW}^0}
      (1 - \omega^{\zeta_{\lS}^{0} - \zeta_{\lW}^0})
    }
    &=
    \frac{
      (1 - \omega^{\nr\zeta_{\lW}^0})
    }{
      \omega^{\nr \zeta_{\lW}^0}
      (1 - \omega^{\nr(\zeta_{\lS}^{0} - \zeta_{\lW}^0)})
      \qlog{\zeta_{\lS}^{0} - \zeta_{\lW}^{0}}{\nr - 1}
    }
    \\
    &=
    \frac{
      1
      }{
        (a_{2} m_{2} - 1)
      \qlog{\zeta_{\lS}^{0} - \zeta_{\lW}^{0}}{\nr - 1}
    }
    \\
    &=
    -
    \frac{
      \qlog{\nu_{2}}{\nr}
      }{
      \qlog{\zeta_{\lS}^{0} - \zeta_{\lW}^{0}}{\nr - 1}
    }
  \end{align*}
  Now
  \[
    \qlog{\zeta}{m + n} = \qlog{\zeta}{m}\qlog{\zeta+m}{n}
  \]
  and the inversion relation
  \[
    \qlog{\alpha}{-k}
    =
    \omega^{k \alpha} (-1)^{k} \omega^{k(k+1)/2} \qlog{-\alpha}{k}
  \]
  shows that
  \begin{align*}
    &\frac{
      \qlog{\zeta_{\lN}^0 - \zeta_{\lE}^0- 1}{-k}
    }{
      \qlog{\zeta_{\lN}^0 - \zeta_{\lE}^0 + \nu_{2'} - 1}{-k - n_2'}
    }
    \\
    &=
    \frac{
      \qlog{\zeta_{\lN}^0 - \zeta_{\lE}^0- 1}{-k}
    }{
      \qlog{\zeta_{\lN}^0 - \zeta_{\lE}^0 + \nu_{2'} - 1}{- n_2'}
      \qlog{\zeta_{\lN}^0 - \zeta_{\lE}^0 + \nu_{2'} - 1 - n_{2}'}{-k }
    }
    \\
    &=
    \omega^{k(n_{2}' - \nu_{2'})}
    \frac{
      \qlog{\zeta_{\lE}^0 - \zeta_{\lN}^0 - \nu_{2'} + n_{2}'}{k }
    }{
      \qlog{\zeta_{\lN}^0 - \zeta_{\lE}^0 + \nu_{2'} - 1}{- n_2'}
      \qlog{\zeta_{\lE}^0 - \zeta_{\lN}^0}{k}
    }
  \end{align*}
  so using \(\nu_{1} + \nu_{2} - \nu_{2'} = \nu_{1'}\) and \(\zeta_{\lE}^{0} + \zeta_{\lW}^{0} = \zeta_{\lN}^0 + \zeta_{\lS}^0\) we get
  \begin{align*}
      R_{n_1 n_2}^{n_1' n_2'}
      =
      {}
      &
      -
      \nrdel{n_1 + n_2}{n_1' +  n_2'}
      \frac{
        \omega^{\Theta/2}
      }{
        \nr^{2}
      }
      \frac{
        \qlog{\nu_{2}}{\nr}
      }{
        \qlog{\zeta_{\lS}^{0} - \zeta_{\lW}^{0}}{\nr - 1}
      }
      \frac{
        \pf{\zeta_{\lN}^0}
        \pf{\zeta_{\lS}^0}
      }{
        \pf{\zeta_{\lE}^0}
        \pf{\zeta_{\lW}^0}
      }
      \fstack{\zeta_{\lN}^0 }{\zeta_{\lE}^0}{- \nu_{2'}}
      \fstack{\zeta_{\lS}^0 }{\zeta_{\lW}^0}{\nu_{2}}
      \\
      &\times
      \omega^{\nu_{2} + (n_{2} - 1) \zeta_{\lW}^{0} - n_{2}' (\zeta_{\lE}^{0} + 1) }
      \frac{
        \qlog{\nu_{2'}}{-n_2'}
        \qlog{-\nu_{2}}{n_2 - 1}
        }{
        \qlog{\zeta_{\lN}^0 - \zeta_{\lE}^0 + \nu_{2'} - 1}{- n_2'}
        \qlog{\zeta_{\lS}^0 - \zeta_{\lW}^0 - \nu_{2} - 1}{n_2 }
      }
      \\
      &\times
      \sum_{k}
        \omega^{k(\nu_{1'} - n_1' )}
      \frac{
        \qlog{\zeta_{\lE}^0 - \zeta_{\lN}^0 - \nu_{2'} + n_{2}'}{k }
      }{
        \qlog{\zeta_{\lS}^0 - \zeta_{\lW}^0 - \nu_{2} - 1+ n_2 }{k }
      }
  \end{align*}
  as claimed.
\end{proof}

\begin{lemma}
  \label{thm:fourier-dual-lemma-pinched}
  Suppose that our crossing is pinched and has a standard flattening.
  Then the matrix coefficients \eqref{eq:fourier-dual-coefficients-definition} are given by 
  \begin{equation}
    \label{eq:fourier-dual-coefficients-pinched}
    R_{n_{1} n_{2}}^{n_{1}' n_{2}'}
    =
    \frac{
      \nrdel{n_1 + n_2}{n_1' +  n_2'}
      }{
      \nr 
    }
    \frac{
      1- \omega^{-\nr \nu_{2'}}
      }{
      1- \omega^{-\nu_{2'} + n_{2}'}
    }
    \fstack{- \nu_{2'} + n_{2}' }{- \nu_{2} + n_{2} - 1}{\nu_{1'} - n_{1}' }
  \end{equation}
\end{lemma}

\begin{proof}
  We are taking the limit where all \(\zeta_{i}^{0} \to 0\), which gives
  \begin{equation*}
    \begin{aligned}
      R_{n_1 n_2}^{n_1' n_2'}
      =
      {}
      &
      -
      \nrdel{n_1 + n_2}{n_1' +  n_2'}
      \frac{
        1
      }{
        \nr^{2}
      }
      \frac{
        \qlog{\nu_{2}}{\nr}
      }{
        \nr^{-1}
      }
      \\
      &\times
      \omega^{\nu_{2} }
      \frac{
        \qlog{\nu_{2'}}{-n_2'}
        \qlog{-\nu_{2}}{n_2 - 1}
      }{
        \qlog{\nu_{2'} - 1}{- n_2'}
        \qlog{- \nu_{2} - 1}{n_2 }
      }
      \\
      &\times
      \fstack{- \nu_{2'} + n_{2}' }{ - \nu_{2} -1 + n_{2} }{ \nu_{1'} - n_{1}' }
      \frac{
        1 - \omega^{-\nr\nu_{2}'}
        }{
        1 - \omega^{-\nu_{2}'}
      }
      \frac{
        1 - \omega^{\nr\nu_{2}}
        }{
        1 - \omega^{\nu_{2}}
      }
    \end{aligned}
  \end{equation*}
  because 
  \[
    \fstack{\zeta_{\lS}^0 }{\zeta_{\lW}^0}{\nu_{2}}
    \to
    \sum_{k=0}^{\nr - 1}
    \frac{
      \qlog{0}{k}
      }{
      \qlog{0}{k}
    }
    \omega^{\nu_{2} k}
    =
    \frac{
      1 - \omega^{\nr\nu_{2}}
    }{
      1 - \omega^{\nu_{2}}
    }
  \]
  and similarly for \(-\nu_{2'}\).
  Simplifying gives the claim.
\end{proof}

\begin{proof}[Proof of \cref{thm:pinched-R-matrix-dual}]
  Finally we take the limit of the matrix coefficients \eqref{eq:fourier-dual-coefficients-pinched} where the crossing is \(E\)-nilpotent.
  Since we have a standard flattening \eqref{eq:standard-flattening-E-nilpotent} gives \(\nu_{2} = \nu_{2'}  = 2 \mu_{2}\) and \(\nu_{1'} = 0\), so using \cref{eq:fusion-identity-ii} the coefficients become
  \begin{align*}
    R_{n_{1} n_{2}}^{n_{1}' n_{2}'}
    =
    \nrdel{n_1 + n_2}{n_1' +  n_2'}
    \frac{
      1- \omega^{-\nu_{2} + n_{2}}
      }{
      1- \omega^{-\nu_{2} + n_{2}'}
    }
    \frac{
      \omega^{ \modb{n_{1}'}(-\nu_{2} + n_{2})}
    }{
      \qlog{-\nu_{2} + n_{2}}{ \modb{n_{2}' - n_{2}} }
    }
    \frac{
      \qp{\omega}{\modb{n_2' - n_2} + \modb{n_1'}}
    }{
      \qp{\omega}{\modb{n_2' - n_2}}
      \qp{\omega}{\modb{n_1'}}
    }
  \end{align*}
  Because our indices lie in \(\set{0, \dots, \nr -1}\) we can assume that \(\modb{n_1'} = n_1'\).
  In addition, \(\modb{n_2' - n_2} = \modb{n_1 - n_1'}\) because \(n_1 + n_2 \equiv n_1' + n_2' \pmod \nr\).
  Furthermore, if \(n_1 < n_1'\), then
  \[
    \modb{n_1 - n_1'} + n_1' = \nr + n_1 \ge \nr - 1
  \]
  so
  \[
    \qp{\omega}{\modb{n_2' - n_2} + \modb{n_1'}} = 0
    \text{ if }
    n_1 < n_1'.
  \]

  For \(n_1, n_2, n_1', n_2' \in \set{0, \dots, \nr -1}\) the conditions \(n_1 + n_2 \equiv n_1' + n_2' \pmod \nr\) and \(n_1 \ge n_1'\) are equivalent to \(n_1 + n_2 = n_1' + n_2'\) and \(n_2' \ge n_2\), and we conclude that
  \begin{align*}
    R_{n_{1} n_{2}}^{n_{1}' n_{2}'}
    =
    \del{n_1 + n_2}{n_1' +  n_2'}
    \frac{
      1- \omega^{-\nu_{2} + n_{2}}
      }{
      1- \omega^{-\nu_{2} + n_{2}'}
    }
    \frac{
      \omega^{ n_{1}'(n_{2} - \nu_{2})}
    }{
      \qlog{-\nu_{2} + n_{2}}{ n_{2}' - n_{2} }
    }
    \frac{
      \qp{\omega}{n_{1}}
    }{
      \qp{\omega}{n_2' - n_2}
      \qp{\omega}{n_1'}
    }
  \end{align*}
  because if \(n_2' < n_2\), we can use
  \[
    \frac{1}{\qp{\omega}{-k}} = (1 - 1) (1 - \omega^{-1}) \cdots (1 - \omega^{-(k-1)}) = 0 \text{ for } k > 0.
  \]
  Finally, re-writing
  \begin{align*}
    \frac{
      1 - \omega^{-\nu_2 + n_2}
    }{
      1 - \omega^{-\nu_2 +  n_2'}
    }
    \frac{
      1
    }{
      \qlog{-\nu_2 + n_2}{n_2' - n_2}
    }
    =
    \frac{
      \qp{\omega^{-\nu_2}}{n_2}
    }{
      \qp{\omega^{-\nu_2}}{n_2'}
    }
  \end{align*}
  gives the claim.
\end{proof}

\appendix
\section{Quantum dilogarithms}
\label{sec:quantum-dilogarithms}
This appendix contains some facts about quantum dilogarithms and \(q\)-series used in the earlier parts of the paper (of which it is logically independent).
Most of them were previously known; they are collected here for convenience and to ensure they match our conventions.

Recall that \(\nr \ge 2\) is an integer and
\[
  \omega \defeq \exp(2\pi i /\nr) \text{ and } \omega^{x} \defeq \exp(2\pi i x/\nr) \text{ for } x \in \CC.
\]
To define the complex logarithm we take a branch cut along the negative real axis and choose \(\Im \log \in (-\pi, \pi]\).

\subsection{Basic definitions}
\label{sec:qlog-definitions}
For a nonzero complex parameter \(\mathsf b\), \defemph{Faddeev's noncompact quantum dilogarithm} is defined by
\begin{equation}
  \Phi_{\mathsf b}(z) \defeq
  \exp
  \int_{\mathbb{R} + i\epsilon}
  \frac{
    \exp(-2izw)
  }{
    4 \sinh(w \mathsf b ) \sinh(w/\mathsf b)
  }
  \frac{dw}{w}
\end{equation}
for \(|\Im z| < |\Im c_{\mathsf b}|\), where
\begin{equation}
  c_{\mathsf b} \defeq \frac{i}{2} \left( \mathsf b + \mathsf b^{-1}\right).
\end{equation}
\(\Phi_{\mathsf b}\) extends to a meromorphic function on \(\CC\) with an essential singularity at infinity.

We consider the case where \(\mathsf b = \sqrt \nr\) and the normalization
\begin{equation}
  \pf{\zeta}
  =
  \pf[\nr]{\zeta}
  \defeq
  \Phi_{\sqrt \nr}\left(i \frac{\zeta}{\sqrt \nr} - c_{\sqrt N} + \frac{i}{\sqrt \nr} \right)
\end{equation}
One reason for this is to get a convenient relation with the \(q\)-factorial.
The standard recurrence identities \cite[Section 6]{Faddeev2001} give
\begin{equation}
  \label{eq:pf-recurrence}
  \pf{\zeta + k} = \pf{\zeta} (1 - \omega^{\zeta + 1})^{-1} (1 - \omega^{\zeta + 2})^{-1} \cdots (1 - \omega^{\zeta + k})^{-1}
  .
\end{equation}
This can be conveniently written as
\begin{equation*}
  \pf{\zeta + k} = \pf{\zeta} \qlog{\zeta}{k} \text{ for all } k \in \ZZ
\end{equation*}
in terms of the function%
\note{
  It is more common to write \(\qlog{\omega^{\zeta}}{k}\) for what we denote \(\qlog{\zeta}{k}\), but we prefer the logarithmic notation.
}
\(\qlog{\zeta}{k}\) defined for \(\zeta \in \CC, k \in \ZZ\) by
\begin{equation}
  \label{eq:qlog-def}
  \qlog{\zeta}{k} = \frac{\qlog{\zeta}{k-1}}{1 - \omega^{\zeta + k}}, \quad \qlog{\zeta}{0} = 1.
\end{equation}
The function \(\qlogname\) is sometimes called a cyclic quantum dilogarithm \cite[Section 3]{Faddeev1994}.
Note that 
\[
  \qlog{\zeta}{-k} = (1 - \omega^{\zeta}) (1 - \omega^{\zeta -1}) \cdots (1 - \omega^{\zeta - (k-1)})
  \text{ for } k > 0
\]
and in general \(\qlogname\) can be written using the \(q\)-Pochhammer symbol
\begin{equation}
  \label{eq:qp-def}
  \qp{a}[q]{k}
  \defeq
  \begin{cases}
    (1 - a)(1-aq) \cdots (1 - a q^{k-1})
    &
    k > 0
    \\
    1
    &
    k = 0
    \\
    \left[
      (1-aq^{-1}) \cdots (1 - a q^{-k})
    \right]^{-1}
    &
    k < 0
  \end{cases}
\end{equation}
as 
\begin{equation}
  \qlog{\zeta}{k}
  =
  \frac{1}{\qp{\omega^{\zeta+1}}{k}}.
\end{equation}

Now suppose \(\zeta^0, \zeta^1\) are complex parameters with
\begin{equation}
  \label{eq:flattening-condition-tet}
  \exp(2\pi i \zeta^1) = \frac{1}{1 - \exp(2\pi i \zeta^0)}
\end{equation}
equivalently
\[
  \exp(2\pi i \zeta^0) + \exp(-2\pi i \zeta^1) = 1.
\]
\begin{definition}
  \label{def:qlf}
  For \(\zeta^0, \zeta^1 \in \CC\) satisfying \eqref{eq:flattening-condition-tet} and  \(n \in \ZZ\)
  the \defemph{cyclic quantum dilogarithm} is the function
  \begin{equation}
    \label{eq:qlf-def}
    \qlf{\zeta^0, \zeta^1}{n} \defeq
    \omega^{-\zeta^0\zeta^1/2} 
    \omega^{-n \zeta^1}
    \pf{\zeta^0 + n}.
  \end{equation}
\end{definition}

\begin{proposition}
  It satisfies the recurrence relation
  \begin{equation}
      \qlf{\zeta^0, \zeta^1}{n}
      =
      \qlf{\zeta^0, \zeta^1}{0}
      \omega^{-n \zeta^1}
      \qlog{\zeta^0}{n}
  \end{equation}
  and its integer argument is periodic modulo \(\nr\) :
  \begin{equation}
    \qlf{\zeta^0, \zeta^1}{n + \nr}
    = 
    \qlf{\zeta^0, \zeta^1}{n}.
  \end{equation}
  Furthermore
  \begin{align}
    \label{eq:qlf-shift-0}
    \qlf{\zeta^0 + k, \zeta^1}{n}
    &=
    \omega^{k \zeta^1/2}
    \qlf{\zeta^0 , \zeta^1}{n + k}
    \\
    \label{eq:qlf-shift-1}
    \qlf{\zeta^0 , \zeta^1 + k}{n}
    &=
    \omega^{-k \zeta^0/2 - nk}
    \qlf{\zeta^0 , \zeta^1}{n}
  \end{align}
\end{proposition}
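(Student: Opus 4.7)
The proof is a straightforward series of computations from the definition
\[
  \qlf{\zeta^0, \zeta^1}{n} = \omega^{-\zeta^0\zeta^1/2}\,\omega^{-n\zeta^1}\,\pf{\zeta^0 + n},
\]
combined with the basic recursion \eqref{eq:pf-recurrence} for \(\pfname\). The plan is to address the four identities in the order they are stated, since each subsequent one either uses the previous or uses the same technique.

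First I would establish the recurrence: directly applying \eqref{eq:pf-recurrence} in the form \(\pf{\zeta^0 + n} = \pf{\zeta^0}\,\qlog{\zeta^0}{n}\) to the definition immediately factors out \(\qlf{\zeta^0,\zeta^1}{0} = \omega^{-\zeta^0\zeta^1/2}\pf{\zeta^0}\), leaving the claimed \(\omega^{-n\zeta^1}\qlog{\zeta^0}{n}\). Next, for the two shift formulas I would simply expand both sides from the definition, track the extra factors of \(\omega\), and verify they agree. For example, for \cref{eq:qlf-shift-0}, replacing \(\zeta^0\) by \(\zeta^0 + k\) produces an extra \(\omega^{-k\zeta^1/2}\) from the prefactor and converts \(\pf{\zeta^0 + n}\) into \(\pf{\zeta^0 + (n+k)}\); comparing with \(\omega^{k\zeta^1/2}\qlf{\zeta^0,\zeta^1}{n+k}\) gives the identity. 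The computation for \cref{eq:qlf-shift-1} is even simpler since shifting \(\zeta^1\) only affects the prefactors. Note that in both cases the shift preserves the constraint \eqref{eq:flattening-condition-tet} because \(k \in \ZZ\).

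The one step that is not purely formal is the periodicity \(\qlf{\zeta^0,\zeta^1}{n+\nr} = \qlf{\zeta^0,\zeta^1}{n}\); this is where the compatibility relation \eqref{eq:flattening-condition-tet} between \(\zeta^0\) and \(\zeta^1\) is essential. Using the recurrence already proved, the statement reduces to
\[
  \omega^{-\nr\zeta^1}\,\frac{\qlog{\zeta^0}{n+\nr}}{\qlog{\zeta^0}{n}} = 1.
\]
The ratio of \(\qlogname\)'s telescopes to \(\prod_{j=1}^{\nr}(1 - \omega^{\zeta^0 + j})^{-1}\), and the standard cyclotomic identity \(\prod_{j=0}^{\nr-1}(1 - x\omega^j) = 1 - x^{\nr}\) with \(x = \omega^{\zeta^0}\) gives
\[
  \prod_{j=1}^{\nr}(1 - \omega^{\zeta^0 + j}) = 1 - e^{2\pi i \zeta^0}.
\]
On the other hand \(\omega^{-\nr\zeta^1} = e^{-2\pi i \zeta^1}\), so the product becomes \(e^{-2\pi i \zeta^1}\cdot (1 - e^{2\pi i \zeta^0})^{-1}\), which is exactly \(1\) by \eqref{eq:flattening-condition-tet}. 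This is the only substantive step; the remaining identities are algebraic bookkeeping with the prefactors in the definition of \(\qlfname\).
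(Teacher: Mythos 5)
Your proposal is correct and follows essentially the same route as the paper: the recurrence comes directly from \eqref{eq:pf-recurrence}, the shift identities are prefactor bookkeeping from \cref{eq:qlf-def}, and the periodicity is exactly the paper's combination of the cyclotomic product \((1-\omega^{\zeta^0})\cdots(1-\omega^{\zeta^0+\nr-1}) = 1-\omega^{\nr\zeta^0}\) with the constraint \eqref{eq:flattening-condition-tet}. Your explicit remark that integer shifts preserve \eqref{eq:flattening-condition-tet} is a nice touch the paper leaves implicit.
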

\begin{proof}
  The recurrence relation follows from \eqref{eq:pf-recurrence}, while the periodicity is a consequence of \eqref{eq:flattening-condition-tet} and
  \begin{equation}
    \label{eq:factorization-B}
    (1 - \omega^{\zeta^0})(1 - \omega^{\zeta^0 + 1}) \cdots (1 - \omega^{\zeta^0 + \nr -1}) = 1 - \omega^{\nr \zeta^0}
    =
    \omega^{-\nr \zeta^1}
    =
    e^{-2\pi i \zeta^1}.
  \end{equation}
  The last two relations are obvious.
\end{proof}

The function is called a cyclic quantum dilogarithm in analogy with the classical lifted dilogarithm
\begin{equation}
  \label{eq:dilr-classical}
  \dilr(\zeta^0, \zeta^1)
  \defeq
  \frac{\dil(e^{2\pi i \zeta^0})}{2\pi i} + \pi i \zeta^0 \zeta^1 + \zeta^0 \log(1 - e^{2\pi i \zeta^0})
\end{equation}
where
\begin{equation}
  \dil(z)
  \defeq
  \int_{0}^{z} - \frac{\log(1 -t)}{t}dt
\end{equation}
is the usual dilogarithm and \(\zeta^0, \zeta^1\) satisfy \eqref{eq:flattening-condition-tet}.
An ideal tetrahedron with shape parameter \(e^{2\pi i \zeta^0}\) and flattening \((\zeta^0, \zeta^1)\) has complex Chern-Simons invariant \(\dilr(\zeta^0, \zeta^1) - i \pi / 12\); see \cite{McPhailSnyderVolume} for more details.
It satisfies an inversion relation:

\begin{lemma}
  \label{thm:dilog-inversion-classical}
  \[
    \exp \leftfun( \dilr(\zeta^0, \zeta^1) + \dilr(-\zeta^1, -\zeta^0) \rightfun) = e^{-\pi i /12}
    .
    \qedhere
  \]
\end{lemma}

\begin{proof}
  Using the functional equation \cite{Zagier2007}   
  \[
    \operatorname{Li}_{2}(1 - z)
    =
    -\operatorname{Li}_{2}(z)
    + \frac{\pi^{2}}{6}
    - \log(z) \log(1-z)
  \]
  we have
  \begin{align*}
  &2 \pi i[
    \dilr(\zeta^0, \zeta^1) + \dilr(-\zeta^1, -\zeta^0)
  ]
  \\
  &=
  \operatorname{Li}_{2}(z)
  +
  \operatorname{Li}_{2}(1-z)
  +
  (2 \pi i)^{2} \zeta^{0} \zeta^{1}
  +
  2 \pi i \zeta^{0} \log(1 - z)
  -
  2 \pi i \zeta^{1} \log z
  \\
  &=
  \frac{ \pi^{2} }{ 6 }
  -
  \log(z) \log(1-z) 
  +
  (2 \pi i)^{2} \zeta^{0} \zeta^{1}
  +
  2 \pi i \zeta^{0} \log(1 - z)
  -
  2 \pi i \zeta^{1} \log z
  \\
  &=
  \frac{ \pi^{2} }{ 6 }
  +
  [2 \pi i \zeta^{0} - \log (z) ]
  [2 \pi i \zeta^{1} + \log(1-z)]
  \\
  &=
  \frac{ \pi^{2} }{ 6 }
  +
  (2 \pi i)^{2} k^{0} k^{1}
  \end{align*}
  for some integers \(k^{0}, k^{1}\).
\end{proof}

\textcite{Garoufalidis2014} gave an explicit relationship between quantum and classical dilogarithms, which in our notation is
\begin{theorem}[\protect{\cite[eq.\@ (25)]{Garoufalidis2014}}]
  \label{thm:exact-qlf-value}
  The exact value is given by
  \begin{equation}
    \label{eq:exact-qlf-value}
    \qlf{\zeta^0, \zeta^1}{0}
    =
    \exp\leftfun(
    - \frac{\dilr(\zeta^0, \zeta^1)}{ \nr}
    \rightfun)
    \frac{1 - \omega^{\nr \zeta^0}}{1 - \omega^{\zeta^0}}
    \df{\zeta^0}^{-1}
  \end{equation}
  where
  \begin{equation}
    \label{eq:D-def}
    \df{\zeta}
    = \prod_{k=1}^{\nr -1} (1 - \omega^{\zeta + k})^{k/\nr}
    \defeq
    \exp \leftfun( \frac{1}{\nr} \sum_{k=1}^{\nr-1} k \log(1 - \omega^{\zeta + k}) \rightfun)
    .
  \end{equation}
\end{theorem}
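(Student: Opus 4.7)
The plan is to prove \eqref{eq:exact-qlf-value} by first reducing it to an identity for $\pf{\zeta^0}$ alone, then verifying that identity by matching the standard recurrence of Faddeev's quantum dilogarithm at $\mathsf{b}^2 = \nr$.

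First I would unpack the definition $\qlf{\zeta^0,\zeta^1}{0} = \omega^{-\zeta^0\zeta^1/2}\pf{\zeta^0}$ and observe that the phase $\omega^{-\zeta^0\zeta^1/2} = \exp(-\pi i \zeta^0\zeta^1/\nr)$ exactly cancels the contribution of the middle term $\frac{(2\pi i)^2}{2}\zeta^0\zeta^1$ of $\dilr(\zeta^0,\zeta^1)$ inside $\exp(-\dilr/(2\pi i \nr))$. Thus \eqref{eq:exact-qlf-value} is equivalent to the $\zeta^1$-independent identity
\[
  \pf{\zeta^0} = \exp\!\leftfun(-\frac{\dil(e^{2\pi i\zeta^0})}{2\pi i\nr} - \frac{\zeta^0\log(1-e^{2\pi i\zeta^0})}{\nr}\rightfun)\cdot\frac{1-\omega^{\nr\zeta^0}}{1-\omega^{\zeta^0}}\cdot\df{\zeta^0}^{-1}.
\]
This is the desired reformulation: the classical dilogarithm and the discrete factor $\df{\zeta^0}$ together encode the asymptotics of $\Phi_{\sqrt{\nr}}$ at its degenerate rational parameter.

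Next I would show both sides satisfy the functional equation \eqref{eq:pf-recurrence}, i.e.\ $F(\zeta^0+1) = F(\zeta^0)/(1-\omega^{\zeta^0+1})$. For the LHS this is built in. For the RHS there are three ingredients: (i) $\dil(e^{2\pi i\zeta^0})$ is invariant under $\zeta^0\mapsto\zeta^0+1$; (ii) the shift of the linear-in-$\zeta^0$ term $-\zeta^0\log(1-e^{2\pi i\zeta^0})/\nr$ produces an additional factor $\exp(-\log(1-e^{2\pi i\zeta^0})/\nr) = (1-\omega^{\nr\zeta^0})^{-1/\nr}$; and (iii) a direct product manipulation of \eqref{eq:D-def}, using $\omega^\nr=1$ and telescoping $\prod_{j=1}^{\nr-1}(1-\omega^{\zeta^0+j}) = (1-\omega^{\nr\zeta^0})/(1-\omega^{\zeta^0})$, gives $\df{\zeta^0+1}/\df{\zeta^0} = (1-\omega^{\zeta^0})/(1-\omega^{\nr\zeta^0})^{1/\nr}$. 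Combining these with the ratio $(1-\omega^{\zeta^0})/(1-\omega^{\zeta^0+1})$ from the explicit prefactor produces exactly $(1-\omega^{\zeta^0+1})^{-1}$ as required.

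Since $F(\zeta^0+1) = F(\zeta^0)/(1-\omega^{\zeta^0+1})$ alone does not pin down a meromorphic function (it has a one-parameter family of solutions up to multiplication by any $1$-periodic function), I would then invoke Faddeev's modular duality: $\Phi_\mathsf{b}$ also satisfies a second shift relation in the dual direction $z\mapsto z-i\mathsf{b}^{-1}/2$. For our normalization with $\mathsf{b}^2 = \nr$ this translates into a recurrence for $\pf{\zeta^0+1/\nr}/\pf{\zeta^0}$ in terms of $(1-e^{2\pi i(\zeta^0+1/\nr)})$. I would check the same recurrence for the right-hand side using the identity $\dil(qx)-\dil(x) = -\log(q)\log(1-x) + (\text{terms absorbed into the linear correction})$ with $q = e^{2\pi i/\nr}$, together with an elementary manipulation of $\df{\zeta^0+1/\nr}/\df{\zeta^0}$ reducing it to a single factor of $(1-\omega^{\zeta^0+1})^{1/\nr}$ after the $\dil$ contribution is accounted for. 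The overall constant is then fixed by comparing the two sides at one convenient value, e.g.\ by using the known behavior of $\Phi_{\mathsf{b}}$ at the centrally symmetric point (where $\pfname$ has an explicit product evaluation), or by taking $\zeta^0\to 0^+$ and matching the logarithmic singularities of both sides.

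The main obstacle will be Step 3: the modular-dual recurrence for $\pfname$ is the direction in which $\mathsf{b}^2=\nr$ is truly degenerate — the ``dual'' $q$-Pochhammer no longer collapses to a finite product — and it is here that the classical dilogarithm is forced to appear. Controlling branches of $\log(1-e^{2\pi i\zeta^0})$ and of the fractional powers in $\df{\zeta^0}$ consistently across the two recurrences is the delicate bookkeeping that drives the argument. A viable alternative that avoids modular duality altogether is a direct contour-shift computation on the defining integral: shift the contour past the simple poles at $w = -i\pi k/\sqrt{\nr}$ for $k=1,\dots,\nr-1$ to pick up the $\df{\zeta^0}$ factor as a sum of residues, and handle the accumulated classical contribution from the doubled pole at $w=0$ using standard dilogarithm regularization; this would be the more hands-on route.
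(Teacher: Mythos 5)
Your Steps 1 and 2 are sound (modulo the branch bookkeeping you flag): the phase \(\omega^{-\zeta^0\zeta^1/2}\) does cancel the \(\tfrac{(2\pi i)^2}{2}\zeta^0\zeta^1\) term of \(\dilr\), and both sides satisfy \(F(\zeta^0+1)=F(\zeta^0)(1-\omega^{\zeta^0+1})^{-1}\). The genuine gap is Step 3. With the paper's normalization \(\pf{\zeta}=\Phi_{\sqrt\nr}\bigl(i\zeta/\sqrt\nr-c_{\sqrt\nr}+i/\sqrt\nr\bigr)\), a shift of \(z\) by \(i\mathsf{b}^{-1}=i/\sqrt\nr\) is \(\zeta\mapsto\zeta+1\) and a shift by \(i\mathsf{b}=i\sqrt\nr\) is \(\zeta\mapsto\zeta+\nr\); the half-shift \(z\mapsto z\pm i\mathsf{b}^{-1}/2\) is \(\zeta\mapsto\zeta\pm 1/2\). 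So the recurrence \eqref{eq:pf-recurrence} you already used \emph{is} one of Faddeev's two shift equations, the relation you invoke for \(\pf{\zeta^0+1/\nr}\) is not a functional equation of \(\Phi_{\sqrt\nr}\) at all, and the true dual equation is \(\pf{\zeta}/\pf{\zeta+\nr}=1-\omega^{\nr\zeta}\), which is exactly the \(\nr\)-th iterate of \eqref{eq:pf-recurrence} (since \(\prod_{k=1}^{\nr}(1-\omega^{\zeta+k})=1-\omega^{\nr\zeta}\)). At \(\mathsf{b}^2=\nr\) the two shift lattices are commensurable, so modular duality supplies no independent constraint: your argument determines the answer only up to an arbitrary \(1\)-periodic factor, and matching a value or a logarithmic singularity at a single point cannot remove that ambiguity. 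The proof therefore does not close as written.

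For comparison, the paper does not prove the identity internally; it cites the exact evaluation of Garoufalidis--Kashaev and rewrites it in its conventions, so any self-contained argument is already a different route. To make yours work you would need a genuine uniqueness input in place of the phantom dual recurrence, e.g.\ show the ratio of the two sides is \(1\)-periodic, meromorphic, and has limits as \(\Im\zeta^0\to\pm\infty\), so a Fourier--Liouville argument forces it to be constant, and then fix the constant by one evaluation; this requires the strip asymptotics of \(\Phi_{\sqrt\nr}\), which is essentially the content of the cited result. Your residue alternative is closer in spirit to how such formulas are derived, but the attribution is off: the simple poles at \(w=\pi i m/\sqrt\nr\) with \(m\not\equiv 0\pmod\nr\) produce the finite product \(\df{\zeta^0}\), while the classical dilogarithm comes from the infinite family of \emph{double} poles at \(w=\pi i\sqrt\nr\,k\), \(k\neq 0\) (the pole at \(w=0\) only yields the elementary quadratic/linear phase), and the contour deformation's convergence needs care. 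Finally, note the remark following the theorem: the identity is needed exactly, not up to roots of unity, so the branch tracking in \(\df{}\) and in \(\log(1-e^{2\pi i\zeta^0})\) that you defer is not optional for the paper's later \(\RIII\) argument.
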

The function \(\df{\zeta}^{\nr}\) satisfies a recurrence relation similar to \(\pf{\zeta}\):
\begin{equation}
  \label{eq:D-recurrence}
  \frac{
    \df{\zeta+1}^{\nr}
    }{
    \df{\zeta}^{\nr}
  }
  =
  \frac{
    (1 - \omega^{\zeta})^{\nr}
    }{
    1 - \omega^{\nr \zeta}
  }
\end{equation}

\begin{lemma}
  \label{thm:qlf-product}
  \begin{equation}
    \label{eq:qlf-product}
    \prod_{k=0}^{\nr -1} \qlf{\zeta^0, \zeta^1}{k}
    =
    \omega^{-\nr(\nr-1) \zeta^1/2}
    \exp
    \leftfun(
    - \dilr(\zeta^0, \zeta^1)
    \rightfun)
  \end{equation}
\end{lemma}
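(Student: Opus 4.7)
The plan is to reduce the product of cyclic quantum dilogarithms to the exact value formula of \cref{thm:exact-qlf-value} together with a simple combinatorial rearrangement. The main obstacle is merely keeping track of exponents of the factors $(1-\omega^{\zeta^0+j})$ that appear in several places.

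First I would apply the recurrence
\[
  \qlf{\zeta^0, \zeta^1}{k}
  =
  \qlf{\zeta^0, \zeta^1}{0}\,\omega^{-k\zeta^1}\,\qlog{\zeta^0}{k}
\]
and factor the product as
\[
  \prod_{k=0}^{\nr -1} \qlf{\zeta^0, \zeta^1}{k}
  =
  \qlf{\zeta^0, \zeta^1}{0}^{\nr}\,
  \omega^{-\zeta^1\sum_{k=0}^{\nr -1}k}\,
  \prod_{k=0}^{\nr -1}\qlog{\zeta^0}{k},
\]
using $\sum_{k=0}^{\nr -1}k = \nr(\nr -1)/2$ to produce the expected prefactor $\omega^{-\nr(\nr-1)\zeta^1/2}$. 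It then remains to show that
\[
  \qlf{\zeta^0, \zeta^1}{0}^{\nr}\,
  \prod_{k=0}^{\nr -1}\qlog{\zeta^0}{k}
  =
  \exp\!\left(-\frac{\dilr(\zeta^0, \zeta^1)}{2\pi i}\right).
\]

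Second, I would evaluate the combinatorial product by expanding
\[
  \qlog{\zeta^0}{k} = \prod_{j=1}^{k}(1-\omega^{\zeta^0+j})^{-1}
\]
and counting: the factor $(1-\omega^{\zeta^0+j})^{-1}$ appears in $\qlog{\zeta^0}{k}$ precisely for $k \geq j$, i.e.\ $\nr - j$ times as $k$ ranges over $0, \dots, \nr - 1$. Thus
\[
  \prod_{k=0}^{\nr -1}\qlog{\zeta^0}{k}
  =
  \prod_{j=1}^{\nr -1}(1-\omega^{\zeta^0+j})^{-(\nr - j)}.
\]

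Third, I would substitute the exact value \eqref{eq:exact-qlf-value} for $\qlf{\zeta^0,\zeta^1}{0}$, raised to the $\nr$th power. Using the factorization identity \eqref{eq:factorization-B} one has $(1-\omega^{\nr \zeta^0})/(1-\omega^{\zeta^0}) = \prod_{j=1}^{\nr -1}(1-\omega^{\zeta^0+j})$, and by definition $\df{\zeta^0}^{-\nr} = \prod_{j=1}^{\nr-1}(1-\omega^{\zeta^0+j})^{-j}$. Multiplying all three contributions together, the total exponent of $(1-\omega^{\zeta^0+j})$ is
\[
  \nr - j - (\nr - j) = 0
\]
for every $j$, so the algebraic factors cancel completely, leaving only $\exp(-\dilr(\zeta^0,\zeta^1)/(2\pi i))$. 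Combined with the prefactor $\omega^{-\nr(\nr-1)\zeta^1/2}$ from the first step this is exactly \eqref{eq:qlf-product}. As noted, the only real content is the bookkeeping of exponents; once the three contributions are laid out side by side the cancellation is immediate.
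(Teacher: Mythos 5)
Your proposal is correct and follows essentially the same route as the paper's proof: pull out $\qlf{\zeta^0,\zeta^1}{0}^{\nr}$ and the power of $\omega^{\zeta^1}$ via the recurrence, count the multiplicities of the factors $(1-\omega^{\zeta^0+j})$ against $\df{\zeta^0}^{\nr}$ and the ratio $(1-\omega^{\nr\zeta^0})/(1-\omega^{\zeta^0})$, and invoke the exact value of \cref{thm:exact-qlf-value}. The only difference is cosmetic: the paper organizes the same cancellation by computing the inverse of the product, while you track exponents directly.
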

\begin{proof}
  It's a bit simpler to compute the inverse of this product.
  First observe that
  \[
    \prod_{k=1}^{\nr-1} (1 - \omega^{\zeta^0 + 1})\cdots(1 - \omega^{\zeta^0 + k})
    =
    \prod_{k=1}^{\nr-1} (1 - \omega^{\zeta^0 + 1})^{\nr - k}
  \]
  Because
  \[
    \df{\zeta^0}^{\nr}
    =
    \prod_{k=1}^{\nr-1} (1 - \omega^{\zeta^0 + k})^{k}
  \]
  we see that
  \begin{align*}
    &\left( \frac{1 - \omega^{\zeta^0} }{1 - z^0} \right)^{\nr}
    \df{\zeta^0}^{\nr}
    \prod_{k=1}^{\nr-1} (1 - \omega^{\zeta^0 + 1})\cdots(1 - \omega^{\zeta^0 + k})
    \\
    &=
    \left( \frac{1 - \omega^{\zeta^0} }{1 - z^0} \right)^{\nr}
    \prod_{k=1}^{\nr-1} (1 - \omega^{\zeta^0 + k})^{\nr}
    \\
    &=
    (1 - z^0)^{-\nr}
    \prod_{k=1}^{\nr} (1 - \omega^{\zeta^0 + k})^{\nr}
    \\
    &=
    1.
  \end{align*}
  Similarly
  \[
    \prod_{k=1}^{\nr -1} \omega^{k \zeta^1}
    =
    \omega^{\zeta^1\nr(\nr-1)/2}
    =
    \exp\leftfun( 2 \pi i \frac{(\nr -1)}{2}  \zeta^1 \rightfun)
  \]
  and combining this with \cref{thm:exact-qlf-value} gives the result.
\end{proof}

\subsection{Fusion identities}
\label{sec:fusion-identities}
Suppose \(\alpha, \beta, \gamma \in \CC\) satisfy
\begin{equation}
  \frac{
    1 - \omega^{\nr \alpha}
    }{
    1- \omega^{\nr \beta}
  }
  =
  \omega^{\nr \gamma}
\end{equation}
and consider the sum
\begin{equation}
  \label{eq:f-sum-def}
  \fstack{\alpha}{\beta}{\gamma} 
  \defeq
  \sum_{k}
  \frac{
    \qlog{\alpha}{k}
  }{
    \qlog{\beta}{k}
  }
  \omega^{k\gamma}
\end{equation}
It clearly depends only on the parameters \(\alpha, \beta, \gamma \in \CC\) modulo \(\nr \ZZ\).
By \cite[eq.\@ A.14]{Kashaev1993} we have
\begin{equation}
  \label{eq:fusion-identity-i}
  \fstack{\alpha + k}{\beta + l}{\gamma + m} 
  =
  \fstack{\alpha}{\beta}{\gamma} 
  \frac{
    \qlog{\alpha - \beta -1}{k-l}
    \qlog{\beta}{l}
    \qlog{-\gamma}{-m}
  }{
    \omega^{l (\gamma + m)}
    \omega^{m(\beta + 1)}
    \qlog{\alpha}{k}
    \qlog{\alpha - \beta - \gamma - 1}{k - l - m}
  }
\end{equation}
when \(\gamma \not \in \ZZ\).
In the special case where \(\gamma\) is an integer, so \(\alpha \equiv \beta \pmod {\ZZ}\) this becomes
\begin{equation}
  \label{eq:fusion-identity-ii}
  \fstack{\alpha + k}{\alpha + l-1}{m} 
  =
  \nr
  \frac{
    1 - \omega^{\alpha + l}
  }{
    1 - \omega^{\nr \alpha}
  }
  \frac{
    \omega^{\modb{-m}(\alpha + l)}
  }{
    \qlog{\alpha + l}{\modb{k - l}}
  }
  \frac{
    \qp{\omega}{\modb{k - l} + \modb{-m}}
  }{
    \qp{\omega}{\modb{k - l}}
    \qp{\omega}{\modb{-m}}
  }
\end{equation}
as in \cite[eq.\@ 40]{Garoufalidis2021descendant}.
Here \(\modb{k}\) is the unique integer with
\[
  0 \le \modb{k} < \nr
  \text{ and }
  \modb{k} \equiv k \pmod \nr
\]
and \(\qp{\omega}[\omega]{k}\) is the \(q\)-Pochhammer symbol defined in \cref{eq:qp-def}.
Another useful identity comes from taking the limit of \eqref{eq:f-sum-def} as \(\alpha \to -\infty\) and using \cite[eq.\@ C.7]{Kashaev1993} to obtain
\begin{equation}
  \label{eq:f-Nth-power}
  \left[
    \sum_{k=0}^{\nr-1}
    \frac{\omega^{k \gamma}}{\qlog{\beta}{k}}
  \right]^{\nr}
  =
  \left[
    \omega^{(\nr-1)\beta}
    \frac{
      \df{0}
    }{
      \df{-\gamma + 1}
      \df{\beta + 1}
  }
  \right]^{\nr}
  .
\end{equation}

\subsection{Fourier transform identities}
For a few of our computations we need to understand how the operators appearing in the factorization of the \(R\)-matrix in \cref{thm:R-factorization} act in the basis \(\set{\vb n}\) Fourier dual to \(\set{\vbh n}\).
In particular, we want to compute the determinant of the circulant matrices \(\mathcal{Z}_N\) and \(\mathcal{Z}_S\).
To do this we find the discrete Fourier transform of the cyclic quantum dilogarithm.
Similar results are given by \citeauthor{Hikami2014} \cite{Hikami2014}.

\begin{theorem}
  \label{thm:qlf-fourier}
  The Fourier transform of the cyclic quantum dilogarithm is given by
  \begin{equation}
    \label{eq:qlf-fourier}
    \sum_{k=0}^{\nr-1}
    \qlf{\zeta^0, \zeta^1}{k} \omega^{nk}
    =
    \frac{
      \omega^{(\nr-1) \zeta^0} \nr
    }{
      S_{\nr}
    }
    \qlf{-\zeta^{1}, - \zeta^{0}}{n-1}^{-1}
  \end{equation}
  where \(S_{\nr}\) is a constant (depending only on \(\nr\)) satisfying
  \begin{equation}
    \label{eq:SN-value}
    (S_{\nr})^{\nr} = \df{0}^{\nr} e^{-\pi i/12}
    .
  \end{equation}
\end{theorem}

\begin{proof}
  For \(\zeta^{0}, \zeta^{1}\) satisfying \eqref{eq:flattening-condition-tet}, set
  \begin{equation*}
    S_{\nr}(\zeta^{0}, \zeta^{1})
    \defeq
    \frac{
      \omega^{-(\nr-1) \zeta^{1}}
    }{
      \pf{-\zeta^{1}} 
    }
    \omega^{\zeta^{0} \zeta^{1}}
    \sum_{k = 0}^{\nr -1}
    \frac{
      \omega^{k \zeta^{1}}
    }{
      \pf{\zeta^{0} + k} 
    }
    .
  \end{equation*}
  Using \eqref{eq:D-recurrence} we can re-write \cref{thm:exact-qlf-value} as
  \[
    \pf{\zeta^{0}}^{\nr}
    =
    \exp(\pi i \zeta^{0} \zeta^{1} -\dilr(\zeta^{0}, \zeta^{1}))
    \omega^{-\nr(\nr -1)\zeta^{1}} \df{\zeta^{0} + 1}^{-\nr}
  \]
  We can now apply \cref{eq:f-Nth-power} to compute
  \begin{align*}
    &S_{\nr}(\zeta^{0}, \zeta^{1})^{\nr}
    \\
    &=
    \left[
    \frac{
      \omega^{-(\nr-1) \zeta^{1}}
      }{
      \pf{-\zeta^{1}} 
    }
    \omega^{\zeta^{0} \zeta^{1}}
    \sum_{k = 0}^{\nr -1}
    \frac{
      \omega^{k \zeta^{1}}
      }{
      \pf{\zeta^{0} + k} 
    }
    \right]^{\nr}
    \\
    &=
    \frac{
      \omega^{-\nr(\nr - 1)\zeta^{1}}
    }{
      \pf{\zeta^{0}}^{\nr} 
      \pf{-\zeta^{1}}^{\nr} 
    }
    \omega^{\nr \zeta^{0} \zeta^{1}}
    \left[
      \omega^{(\nr-1)\zeta^{0}}
      \frac{
        \df{0}
      }{
        \df{1 - \zeta^{1}}
        \df{\zeta^{0} + 1}
      }
    \right]^{\nr}
    \\
    &=
    \frac{
      e^{2 \pi i \zeta^{0} \zeta^{1}}
      \df{0}^{\nr}
    }{
      \left[
        \pf{\zeta^{0}}^{\nr}
        \df{\zeta^{0} + 1}^{\nr}
        \omega^{\nr (\nr -1) \zeta^{1}}
      \right]
      \left[
        \pf{-\zeta^{1}}^{\nr}
        \df{-\zeta^{1} + 1}^{\nr}
        \omega^{-\nr (\nr -1) \zeta^{0}}
      \right]
    }
    \\
    &=
    \df{0}^{\nr}
    \exp \leftfun(
      \dilr(\zeta^{0} , \zeta^{1})
      +
      \dilr(-\zeta^{1} , -\zeta^{0})
    \rightfun)
  \end{align*}
  Applying \cref{thm:dilog-inversion-classical} gives \eqref{eq:SN-value}.

  We now know that
  \[
    \pf{\zeta^{0}} 
    =
    \frac{
      \omega^{\zeta^{0} \zeta^{1}}
    }{
      S_{\nr}
    }
    \sum_{k=0}^{\nr-1}
    \frac{
      \omega^{(\nr - 1 -k) \zeta^{0}}
      }{
      \pf{-\zeta^{1} + k} 
    }
  \]
  so that
  \[
    \sum_{\ell = 0}^{\nr-1}
    \pf{\zeta^{0} + \ell}
    \omega^{-\ell \zeta^{1}}
    =
    \frac{
      \omega^{\zeta^{0} \zeta^{1}}
    }{
      S_{\nr}
    }
    \sum_{k,\ell=0}^{\nr-1}
    \frac{
      \omega^{(\nr - 1 -k) (\zeta^{0} + \ell)}
      }{
      \pf{-\zeta^{1} + k} 
    }
  \]
  The expression inside the sum depends only on \(k\) modulo \(\nr \mathbb{Z}\).
  Taking the sum over \(\ell\) gives a periodic delta function which lets us eliminate the sum over \(k\) to get
  \begin{equation}
    \sum_{k=0}^{\nr-1}
    \pf{\zeta^{0} + \ell}
    \omega^{-\ell \zeta^{1}}
    =
    \frac{
      \nr
      }{
      S_{\nr}
    }
    \frac{
      \omega^{\zeta^{0} \zeta^{1} + \nr \zeta^{0}}
      }{
      \pf{-\zeta^{1} - 1}
    }
  \end{equation}  
  which is equivalent to \eqref{eq:qlf-fourier}.
\end{proof}

\begin{remark}
  Experimental evidence (following \cite[eq.\ (C.5)]{Kashaev1993}) suggests that
  \[
    S_{\nr}
    =
    \nr^{1/2}
    \exp \leftfun( \frac{\pi i}{12} ( \nr + 1/\nr) - \frac{\pi i}{4} \rightfun)
  \]
  One could probably prove this by taking the \(\zeta^{0} \to 0\) limit in \eqref{eq:qlf-fourier}.
\end{remark}

\begin{theorem}
  \label{thm:hard-determinant}
  For \(\zeta^0, \zeta^1\) satisfying \eqref{eq:flattening-condition-tet}, the linear map \(\CC^\nr \to \CC^\nr\) defined by 
  \[
    \mathcal{Z}(\vbh n)
    =
    \sum_{n'=0}^{\nr -1}
    \qlf{\zeta^0, \zeta^1}{n'-n}\vbh{n'}
  \]
  has determinant
  \[
    \det \mathcal{Z} = 
    \frac{\nr^{\nr}}{\df{0}^{\nr}}
    \omega^{\nr(\nr-1) \zeta^{0}/2}
    \leftfun(
      -
      \frac{
        \dilr(\zeta^{0}, \zeta^{1})
      }{
        2\pi i
      }
    \rightfun)
    .
    \qedhere
  \]
\end{theorem}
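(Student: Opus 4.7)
The strategy is to diagonalize $\mathcal{Z}$ via the discrete Fourier transform and then evaluate the resulting product of eigenvalues using identities from \cref{sec:quantum-dilogarithms}. Since $\mathcal{Z}_{n',n} = \qlf{\zeta^0,\zeta^1}{n'-n}$ depends only on $n'-n \pmod{\nr}$ by the periodicity of $\qlfname$, the operator $\mathcal{Z}$ is circulant. Using \eqref{eq:basis-and-dual-basis} to change to the dual basis $\{\vb{k}\}$ diagonalizes it: a direct computation gives $\mathcal{Z}(\vb{k}) = \lambda_k \vb{k}$ with $\lambda_k = \sum_{p=0}^{\nr-1}\omega^{kp}\qlf{\zeta^0,\zeta^1}{p}$, so $\det\mathcal{Z} = \prod_k \lambda_k$.

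The key step is a first-order recurrence for the $\lambda_k$. Starting from the defining relation $\qlf{\zeta^0,\zeta^1}{p+1}(1-\omega^{\zeta^0+p+1}) = \omega^{-\zeta^1}\qlf{\zeta^0,\zeta^1}{p}$, I multiply by $\omega^{kp}$ and sum over $p \in \ZZ/\nr\ZZ$; the periodicity $\qlf{\zeta^0,\zeta^1}{\nr}=\qlf{\zeta^0,\zeta^1}{0}$ ensured by flattening is what makes the shift of index close up, and the result is $(1-\omega^{k-\zeta^1})\lambda_k = \omega^{\zeta^0}\lambda_{k+1}$. Iterating yields $\lambda_k = \lambda_0\, \omega^{-k\zeta^0}\qp{\omega^{-\zeta^1}}{k}$. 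The product $\prod_k\qp{\omega^{-\zeta^1}}{k} = \prod_{j=0}^{\nr-1}(1-\omega^{j-\zeta^1})^{\nr-1-j}$ then simplifies, using $\prod_{j=0}^{\nr-1}(1-\omega^{j-\zeta^1}) = \omega^{\nr\zeta^0}$ (the flattening identity) together with the definition of $\df{-\zeta^1}$, to $\omega^{\nr(\nr-1)\zeta^0}/\df{-\zeta^1}^\nr$. Combining,
\[
\det\mathcal{Z} = \lambda_0^\nr\, \omega^{\nr(\nr-1)\zeta^0/2}\,\df{-\zeta^1}^{-\nr}.
\]

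To pin down $\lambda_0^\nr$, I invoke the DFT orthogonality relation $\sum_k \lambda_k = \nr\qlf{\zeta^0,\zeta^1}{0}$. Combined with the explicit form of $\lambda_k$, this gives $\lambda_0 = \nr\qlf{0}/S$ where
\[
S = \sum_{k=0}^{\nr-1}\omega^{-k\zeta^0}\qp{\omega^{-\zeta^1}}{k} = \sum_{k=0}^{\nr-1}\omega^{-k\zeta^0}/\qlog{-\zeta^1-1}{k},
\]
which is precisely the sum on the LHS of \eqref{eq:f-Nth-power} with $\gamma = -\zeta^0$ and $\beta = -\zeta^1-1$. The flattening condition is exactly what forces the compatibility $(1-\omega^{\nr\beta})\omega^{\nr\gamma} = 1$ needed to apply the identity, so $S^\nr$ can be written in closed form in terms of $\df{0}$, $\df{-\zeta^0-1}$, and $\df{-\zeta^1}$. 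Substituting this together with the exact formula \cref{thm:exact-qlf-value} for $\qlf{0}^\nr$ (which is where the factor $\exp(-\mathcal{L}/(2\pi i))$ is produced) into $\lambda_0^\nr = \nr^\nr\qlf{0}^\nr/S^\nr$ and then into the product formula above gives the claimed value.

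The main obstacle is the final algebraic simplification. The intermediate expression carries a $\zeta^1$-dependent phase $\omega^{\nr^2(\nr-1)\zeta^1}$ coming from $S^\nr$, factors of $(1-\omega^{\nr\zeta^0})$ from $\qlf{0}^\nr$, and the three $\df{}$-arguments $\df{\zeta^0}, \df{-\zeta^0-1}, \df{-\zeta^1}$ in addition to $\df{0}$. I expect that using flattening in the form $(1-\omega^{\nr\zeta^0})^\nr = \omega^{-\nr^2\zeta^1}$ together with a reflection identity relating $\df{\zeta^0}\df{-\zeta^0-1}$ to $\df{0}$ and the elementary factor $(1-\omega^{\zeta^0})$, every dependence on $\zeta^1$ and every $\df{}$-factor except $\df{0}^{-\nr}$ must drop out, leaving exactly the stated answer; verifying this cleanly---and making sure that the $N$-th root of unity ambiguity implicit in \eqref{eq:f-Nth-power} does not create a phase obstruction---is the most delicate part of the argument.
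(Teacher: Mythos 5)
Your strategy is viable and, in the middle, genuinely different from the paper's: the paper also diagonalizes the circulant \(\mathcal{Z}\) by the discrete Fourier transform, but it gets each eigenvalue in closed form from the Fourier-transform identity \eqref{eq:qlf-fourier} (whose proof uses the \(q\)-hypergeometric transformation in \cref{thm:q-series-id-w}) and then finishes with \cref{thm:qlf-product} and \cref{thm:S-Nth-power}. Your replacement of that step by the first-order recurrence \((1-\omega^{k-\zeta^1})\lambda_k = \omega^{\zeta^0}\lambda_{k+1}\) is correct and more elementary: it does give \(\lambda_k = \lambda_0\,\omega^{-k\zeta^0}\qp{\omega^{-\zeta^1}}{k}\), the eigenvalue product is exactly (not just up to roots of unity, thanks to the flattening) \(\det\mathcal{Z} = \lambda_0^{\nr}\,\omega^{\nr(\nr-1)\zeta^0/2}\,\df{-\zeta^1}^{-\nr}\), and \(\lambda_0 = \nr\,\qlf{\zeta^0,\zeta^1}{0}/S\) with your \(S\). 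Your worry about an \(\nr\)th-root-of-unity ambiguity is moot, since only \(S^{\nr}\) enters.

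The genuine gap is exactly the step you flagged and did not carry out: the cancellation you ``expect'' does not occur with the ingredients as you assembled them. Substituting \eqref{eq:f-Nth-power} with \((\beta,\gamma)=(-\zeta^1-1,-\zeta^0)\) together with \cref{thm:exact-qlf-value} into your product formula leaves the residual factor
\[
  \frac{(1-\omega^{\nr\zeta^0})^{\nr}}{(1-\omega^{\zeta^0})^{\nr}}\,
  \frac{\df{-\zeta^0-1}^{\nr}}{\df{\zeta^0}^{\nr}}\,
  \omega^{\nr^{2}(\nr-1)\zeta^{1}},
\]
which is not \(1\). For \(\nr=2\) one can check everything by hand: from \(\qlf{\zeta^0,\zeta^1}{1}=\qlf{\zeta^0,\zeta^1}{0}\,\omega^{-\zeta^1}(1-\omega^{\zeta^0+1})^{-1}\) and the flattening, the true value of your sum is \(S^{2}=\df{0}^{2}\,\df{\zeta^0}^{2}\,\df{-\zeta^1}^{-2}\), whereas your substitution into \eqref{eq:f-Nth-power} yields \(\omega^{4\beta}\,\df{0}^{2}\,\df{-\zeta^0-1}^{-2}\,\df{-\zeta^1}^{-2}\); these differ by the non-unimodular factor \(-\omega^{\zeta^0}(1-\omega^{2\zeta^0})\), so the determinant comes out off by its reciprocal rather than matching the theorem. (Part of the blame lies with \eqref{eq:f-Nth-power} as printed: it cannot be quoted verbatim with these parameters --- note that even the paper's own use of it in the proof of \cref{thm:S-Nth-power} has \(\df{1+\zeta^0}\) where the printed formula would give \(\df{\gamma-1}\).) With the correct closed form of \(S^{\nr}\) your computation does reproduce the stated determinant (I verified this for \(\nr=2\)), so the route is salvageable; but that evaluation is precisely the nontrivial content, which the paper supplies in its own normalization through \eqref{eq:qlf-fourier} and \cref{thm:S-Nth-power}. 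As written, the decisive cancellation is asserted, unverified, and in fact fails.
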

\begin{proof}
  Using \cref{eq:basis-and-dual-basis} it is easy to see that \(\mathcal{Z}\) is diagonal in the basis \(\set{\vb n}\),
  \[
    \mathcal{Z}(\vb n)
    =
    \sum_{k=0}^{\nr-1} \omega^{-nk} \qlf{\zeta^0, \zeta^1}{-k} \vb n
  \]
  and \cref{eq:qlf-fourier} shows that the matrix coefficients are
  \[
    \sum_{k=0}^{\nr-1} \omega^{-nk} \qlf{\zeta^0, \zeta^1}{-k}
    =
    \frac{\omega^{(\nr -1)\zeta^0} \nr}{S_{\nr}}
    \qlf{-\zeta^{1}, -\zeta^{0}}{n - 1}^{-1}
  \]
  Now by periodicity
  \[
    \det \mathcal{Z}
    =
    \frac{\omega^{\nr (\nr -1)\zeta^0} \nr^{\nr} }{(S_{\nr})^{\nr}}
    \prod_{n=0}^{\nr -1}
    \qlf{-\zeta^{1}, -\zeta^{0}}{n}^{-1}
  \]
  so by \cref{thm:qlf-product,thm:qlf-fourier} the determinant is
  \[
    \frac{\nr^{\nr}}{\df{0}^{\nr}}
    \omega^{\nr(\nr-1) \zeta^{0}/2}
    \leftfun(
    \frac{
      \pi i
      }{
      12 
    }
    +
        \dilr(-\zeta^{1}, -\zeta^{0})
    \rightfun)
  \]
  Applying \cref{thm:dilog-inversion-classical} gives the claim.
\end{proof}

\printbibliography

\end{document}